\theoremstyle{plain}      \newtheorem*{thm*}{Theorem}
                         \newtheorem{thm}{Theorem}[section]
                         \newtheorem{lemma}[thm]{Lemma}
                         \newtheorem{prop}[thm]{Proposition}
                         \newtheorem{cor}[thm]{Corollary}
\theoremstyle{remark}	  \newtheorem*{remark}{Remark}
\theoremstyle{definition} \newtheorem{example}{Example}
\newtheoremstyle{efronremark}
{6pt}{6pt}{}{}{\itshape}{\quad}{ }{\thmnote{#3}}
\theoremstyle{efronremark}   \newtheorem*{eremark}{}
\date{}
\numberwithin{equation}{section}
\newcommand{\<}{\langle}  \renewcommand{\>}{\rangle}
\newcommand{\bbq}{\mathbb Q}
\newcommand{\real}{\mathbb R}
\newcommand{\calx}{\mathcal X}
\newcommand{\calp}{\mathcal P}
\newcommand{\barb}{\bar\beta}
\newcommand{\barf}{\bar{f}}
\newcommand{\barm}{\bar{M}}
\newcommand{\lam}{\lambda}
\newcommand{\lamp}{(\lambda)}
\newcommand{\laml}{\lambda_{J^c}}
\newcommand{\piqt}{\pi_{q,t}}
\newcommand{\piit}{\pi_{\infty,t}}
\newcommand{\dqt}{D_{q,t}}
\newcommand{\dal}{D_\alpha}
\newcommand{\plam}{P_\lam(x;q,t)}
\newcommand{\xrl}{X_\rho^\lam}
\newcommand{\toil}{\underset{\infty}{\overset{\ell}{\to}}}
\newcommand{\defeq}{\overset{\text{def}}{=}\ }
\DeclareMathOperator\dg{deg}
\begin{document}

\title{A probabilistic interpretation of the Macdonald polynomials}

\author{\textsc{Persi Diaconis}\footnote{Supported in part by NSF grant 0804324.}\ %
\footnote{Corresponding author: 390 Serra Mall, Stanford, CA 94305-4065. 
}
\\
\textit{Departments of}\\\textit{Mathematics and Statistics}\\\textit{Stanford University}\and
       \textsc{Arun Ram}\footnote{Supported in part by NSF grant 0353038, 
      and ARC grants DP0986774 and DP087995.}\\
\textit{Department of}\\\textit{Mathematics and Statistics}\\\textit{University of Melbourne}}

\maketitle

\begin{abstract}
 The two-parameter Macdonald polynomials are a central object of
 algebraic combinatorics and representation theory. We give a Markov
 chain on partitions of $k$ with eigenfunctions the coefficients of
 the Macdonald polynomials when expanded in the power sum
 polynomials. The Markov chain has stationary distribution a new
 two-parameter family of measures on partitions, the inverse of the
 Macdonald weight (rescaled). The uniform distribution on
 permutations and the Ewens sampling formula are special cases. The
 Markov chain is a version of the auxiliary variables algorithm of
 statistical physics. Properties of the Macdonald polynomials allow a
 sharp analysis of the running time. In natural cases, a bounded
 number of steps suffice for arbitrarily large $k$.

\begin{eremark}[Keywords:] Macdonald polynomials, random permutations, measures on partitions, auxiliary variables, Markov chain, rates of convergence
\ 
\end{eremark}

\begin{eremark}[AMS 2010 subject classifications:]
05E05 primary;  60J10 secondary.
\end{eremark}

\end{abstract}

\section{Introduction}\label{sec1}

The Macdonald polynomials $\plam$ are a widely studied family
of symmetric polynomials in variables $X=(x_1,x_2,\dots,x_n)$. Let
$\Lambda_n^k$ denote the vector space of homogeneous symmetric
polynomials of degree $k$ (with coefficients in $\bbq$). The Macdonald
inner product is determined by setting the inner product between power
sum symmetric functions $p_\lam$ as
\begin{equation*}
\<p_\lam,p_\mu\>=\delta_{\lam\mu}z_\lam(q,t),
\end{equation*}
with
\begin{equation}
z_\lam(q,t)=z_\lam\prod_i\left(\frac{1-q^{\lam_i}}{1-t^{\lam_i}}\right)\quad
\text{and}\quad z_\lam=\prod_ii^{a_i}a_i!\,,
\label{11}
\end{equation}
for $\lam$ a partition of $k$ with $a_i$ parts of size $i$.

For each $q,t$, as $\lam$ ranges over partitions of $k$, the
$\plam$ are an orthogonal basis for $\Lambda_n^k$. Special
values of $q,t$ give classical bases such as Schur functions ($q=t$),
Hall--Littlewood functions ($t=0$), and the Jack symmetric functions
(limit as $t\to1$ with $q^\alpha=t$). An enormous amount of
combinatorics, group theory, and algebraic geometry is coded into
these polynomials. A more careful description and literature review is
in \ref{sec2}.

The original definition of Macdonald constructs $\plam$ as the
eigenfunctions of a somewhat mysterious family of operators
$\dqt(z)$. This is used to develop their basic properties in
\cite{mac}. A main result of the present paper is that the Macdonald
polynomials can be understood through a natural Markov chain
$M(\lam,\lam')$ on the partitions of $k$. For $q,t>1$, this Markov
chain has stationary distribution
\begin{equation}
\piqt\lamp=\frac{Z}{z_\lam(q,t)}\qquad\text{with}\quad
Z=\frac{(q,q)_k}{(t,q)_k},\quad (x,y)_k=\prod_{i=0}^{k-1}\left(1-xy^i\right).
\label{12}
\end{equation}
Here $z_\lam(q,t)$ is the Macdonald weight \eqref{11} and $Z$ is a
normalizing constant. The coefficients of the Macdonald polynomials
expanded in the power sums give the eigenvectors of $M$, and there is a
simple formula for the eigenvalues.

Here is a brief description of $M$. From a current partition $\lam$,
choose some parts to delete: call these $\lam_J$. This leaves
$\laml=\lam\backslash\lam_J$. The choice of $\laml$ given $\lam$ is made with
probability
\begin{equation}
w_\lam(\laml)=\frac1{q^k-1}\prod_{i=1}^k\binom{a_i\lamp}{a_i(\laml)}(q^i-1)^{a_i\lamp-a_i(\laml)}.
\label{13}
\end{equation}
It is shown in \ref{sec2d} that for each $\lam$, $w_\lam(\cdot)$ is a
probability distribution with a simple-to-implement
interpretation. Having chosen $\laml$, choose a partition $\mu$ of
size $|\lam|-|\laml|$ with probability
\begin{equation}
\piit(\mu)=\frac{t}{t-1}\frac1{z_\mu}\prod\left(1-\frac1{t^i}\right)^{a_i(\mu)}.
\label{14}
\end{equation}
Adding $\mu$ to $\laml$ gives a final partition $\nu$. These two
steps define the Markov chain $M(\lam,\nu)$ with stationary
distribution $\piqt$. It will be shown to be a natural extension of
basic algorithms of statistical physics: the Swendsen--Wang and
auxiliary variables algorithms. Properties of the Macdonald
polynomials give a sharp analysis of the running time for $M$.

\ref{sec2} gives background on Macdonald polynomials (\ref{sec2a}),
Markov chains (\ref{sec2b}), and auxiliary variables algorithms
(\ref{sec2c}). The Markov chain $M$ is shown to be a special case of auxiliary
variables and
hence is reversible with $\piqt\lamp$ as stationary distribution.
\ref{sec2d} reviews some of the many different measures used on
partitions, showing that $w_\lam$ and $\piit$ above have simple
interpretations and efficient sampling algorithms. \ref{sec2d} also
presents simulations of the measure $\piqt\lamp$ using $M$. This gives
an understanding of $\piqt$; it also illustrates (numerically) that a
few steps of $M$ suffice for large $k$ while classical sampling
algorithms (rejection sampling or Metropolis) become impractical.

The main theorems are in \ref{sec3}. The Markov chain $M$ is
identified as one term of Macdonald operators $\dqt(z)$. The
coefficients of the Macdonald polynomials in the power sum basis
(suitably scaled) are shown to be the eigenfunctions of $M$ with a
simple formula for the eigenvalues. Needed values of the eigenvectors
are derived. A heuristic overview of the argument is given
(\ref{sec3b}), which may be read now for further motivation.

The main theorem is an extension of earlier work by \citeauthor{han}
\cite{han,pd86} giving a similar interpretation of the coefficients of
the family of Jack symmetric functions as eigenfunctions of a natural
Markov chain: the Metropolis algorithm on the symmetric group for
generating from the Ewens sampling formula. \ref{sec4} develops the
connection to the present study.

\ref{sec5} gives an analysis of the convergence of iterates of $M$ to
the stationary distibution $\piqt$ for natural values of $q$ and $t$.
Starting from $(k)$, it is shown that a bounded number of steps
suffice for arbitrary $k$. Starting from $1^k$, order $\log k$ steps
are necessary and sufficient for convergence.

\section{Background and examples}\label{sec2}

This section contains needed background on four topics: Macdonald
polynomials, Markov chains, auxiliary variables algorithms, and
measures on partitions and permutations. Each of these has a large
literature. We give basic definitions, needed formulae, and pointers
to literature. \ref{sec2c} shows that the Markov chain $M$ of the
introduction is a special case of the auxiliary variables algorithm.
\ref{sec2d} shows that the steps of the algorithm are easy to run, and
has numerical examples.

\subsection{Macdonald polynomials}\label{sec2a}

Let $\Lambda_n$ be the algebra of symmetric polynomials in $n$
variables (coefficients in $\bbq$). There are many useful bases of
$\Lambda_n$; the monomial $\{m_\lam\}$, power sum $\{p_\lam\}$,
elementary $\{e_\lam\}$, homogeneous $\{h_\lam\}$, and Schur functions
$\{s_\lam\}$ are bases whose change of basis formulae contain a lot of
basic combinatorics \cite[Chap.~7]{stanley}, \cite[Chap.~I]{mac}. More
esoteric bases such as the Hall--Littlewood functions $\{H_\lam(q)\}$,
zonal polynomials $\{Z_\lam\}$, and Jack symmetric functions
$\{J_\lam(\alpha)\}$ occur as the spherical functions of natural
homogeneous spaces \cite{mac}. In all cases, as $\lam$ runs over
partitions of $k$, the associated polynomials form a basis of the
vector space $\Lambda_n^k$: homogeneous symmetric polynomials of
degree $k$.

Macdonald introduced a two-parameter family of bases $\plam$ which,
specializing $q,t$ in various ways, gives essentially all the previous
bases. The Macdonald polynomials can be succinctly characterized by
using the inner product $\<p_\lam,p_\mu\>=\delta_{\lam\mu}z_\lam(q,t)$
with $z_\lam(q,t)$ from \eqref{11}. This is positive definite
\cite[VI~(4.7)]{mac} and there is a unique family of symmetric
functions $P(x;q,t)$ such that $\<P_\lam,P_\mu\>_{q,t}=0$ if
$\lam\neq\mu$ and $P_\lam=\sum_{\mu\leq\lam}u_{\lam\mu}m_\mu$ with
$u_{\lam\lam}=1$ \cite[VI~(4.7)]{mac}. The properties of $P_\lam$ are
developed by studying $P_\lam$ as the eigenfunctions of a family of
operators $\dqt(z)$ from $\Lambda_n$ to $\Lambda_n$.

Define an operator $T_{u,x_i}$ on polynomials by
$T_{u,x_i}f(x_1,\dots,x_n)=f(x_1,\dots,ux_i,\dots,x_n)$. Define
$\dqt(z)$ and $\dqt^r$ by
\begin{equation}
\dqt(z)=\sum_{r=0}^n\dqt^rz^r
=\frac1{a_\delta}\sum_{w\in S_n}\text{det}(w)\,x^{w\delta}\prod_{i=1}^n\left(1+zt^{(w\delta)_i}T_{q,x_i}\right) \ ,
\label{21}
\end{equation}
where $\delta=(n-1,n-2,\dots,0)$, $a_\delta$ is the Vandermonde
determinant and $x^\gamma = x_1^{\gamma_1}\cdots x_n^{\gamma_n}$ for $\gamma
=(\gamma_1,\ldots, \gamma_n)$. For any $r=0,1,\dots,n$,
\begin{equation}
\dqt^r=\sum_IA_I(x;t)\prod_{i\in I}T_{q,x_i} \ ,
\label{22}
\end{equation}
where the sum is over all $r$-element subsets $I$ of $\{1,2,\dots,n\}$
and
\begin{equation}
A_I(x;t)=\frac1{a_\delta}\left(\prod T_{t,x_i}\right)a_\delta
=t^{r(r-1)/2}\prod_{\substack{i\in I,\\j\notin I}}\frac{tx_i-x_j}{x_i-x_j}\qquad\text{\cite[VI~(3.4)$_r$]{mac}}.
\label{23}
\end{equation}

Macdonald \cite[VI~(4.15)]{mac} shows that the Macdonald polynomials
are eigenfunctions of $\dqt(z)$:
\begin{equation}
\dqt(z)\plam=\prod_{i=1}^n\left(1+zq^{\lam_i}t^{n-i}\right)\plam.
\label{24}
\end{equation}
This implies that the operators $\dqt^r$ commute, and have the
$P_\lam$ as eigenfunctions with eigenvalues the $r$th elementary
symmetric function in $\{q^{\lam_i}t^{n-i}\}$. We will use $\dqt^1$ in
our work below. The $\dqt^r$ are self-adjoint in the Macdonald inner
product $\<\dqt^rf,g\>=\<f,\dqt^rg\>$. This will translate into
having $\piqt$ as stationary distribution.

The Macdonald polynomials may be expanded in the power sums
\cite[VI~(8.19)]{mac},
\begin{equation}
\plam=\frac1{c_\lam(q,t)}\sum_\rho\left[z_\rho^{-1}\prod_i(1-t^{\rho_i})\xrl(q,t)\right]p_\rho(x)
\label{25}
\end{equation}
with \cite[VI~(8.1)]{mac}
$c_\lam(q,t)=\prod_{s\in\lam}(1-q^{a(s)}t^{l(s)+1})$ where the product
is over the boxes in the shape of $\lam$, $a(s)$ the arm length and
$l(s)$ the leg length of box $s$.
The $\xrl(q,t)$ are closely related to the two-parameter Kostka
numbers $K_{\mu\lam}(q,t)$ via \cite[VI~(8.20)]{mac},
\begin{equation}
\xrl(q,t)=\sum_\mu\chi_\rho^\lam K_{\mu\lam}(q,t),\qquad K_{\mu\lam}(q,t)=\sum_\rho z_\rho^{-1}\chi_\rho^\mu \xrl(q,t)
\label{26}
\end{equation}
with $\chi_\rho^\lam$ the characters of the symmetric group for the
$\lam$th representation at the $\rho$th conjugacy class. These
$K_{\mu\lam}(q,t)$ have been a central object of study in algebraic
combinatorics \cite{assaf}, \cite{gr}, \cite{gordon},
\cite{hhl,hhl1,hhl2}, \cite{hai}. The main result of \ref{sec3} shows
that $\xrl(q,t)\prod_i(1-q^{\rho_i})$ are the eigenfunctions of the
Markov chain $M$.

The Macdonald polynomials used here are associated to the root system
$A_n$. Macdonald \cite{mac2} has defined analogous functions for the
other root systems using similar operators. In a major step forward,
\citeauthor{cher} \cite{cher} gives an independent development in all
types, using the double affine Hecke algebra. See \cite{mac,mac03} for
a comprehensive treatment. Using this language, \citeauthor{ry}
\cite{ry} give a ``formula'' for the Macdonald polynomials in general
type. In general type the double affine Hecke is a powerful tool for
understanding actions.  We believe that our Markov chain can be
developed in general type if a suitable analogue of the power sum
basis is established.

\subsection{Markov chains}\label{sec2b}

Let $\calx$ be a finite set. A Markov chain on $\calx$ may be
specified by a matrix $M(x,y)\geq0,\ \sum_yM(x,y)=1$. The
interpretation being that $M(x,y)$ is the chance of moving from $x$ to
$y$ in one step. Then $M^2(x,y)=\sum_zM(x,z)M(z,y)$ is the chance of
moving from $x$ to $y$ in two steps, and $M^\ell(x,y)$ is the chance of 
moving from $x$ to $y$ in $\ell$
steps. Under mild conditions, always met in our examples, there is a
unique stationary distribution $\pi(x)\geq0,\ \sum_x\pi(x)=1$. This
satisfies $\sum_x\pi(x)M(x,y)=\pi(y)$. Hence, the (row) vector $\pi$
is a left eigenvector of $M$ with eigenvalue 1. Probabilistically,
picking $x$ from $\pi$ and taking one further step in the chain leads
to the chance $\pi(y)$ of being at $y$.

All of the Markov chains used here are reversible, satisfying the
detailed balance condition $\pi(x)M(x,y)=\pi(y)M(y,x)$, for all $x,y$
in $\calx$. Set $L^2(\calx)$ to be $\{f:\calx\to\real\}$ with
$(f_1,f_2)=\sum_x\pi(x)f_1(x)f_2(x)$. Then $M$ acts as a contraction
on $L^2(\calx)$ by $Mf(x)=\sum_yM(x,y)f(y)$. Reversibility is
equivalent to $M$ being self-adjoint. In this case, there is an
orthogonal basis of (right) eigenfunctions $f_i$ and real eigenvalues
$\beta_i,\ 1=\beta_0\geq\beta_1\geq\dots\geq\beta_{|\calx|-1}\geq-1$ with
$Mf_i=\beta_if_i$. For reversible chains, if $f_i(x)$ is a left
eigenvector, then $f_i(x)/\pi(x)$ is a right eigenvector with the same
eigenvalue.

A basic theorem of Markov chain theory shows that
$M_x^\ell(y)=M^\ell(x,y)\toil\pi(y)$. (Again, there are mild conditions, met in our
examples.) The distance to stationarity can be measured in $L^1$ by
the total variation distance:
\begin{equation}
\left\|M_x^\ell-\pi\right\|_{\text{TV}}
=\max_{A\subseteq\calx}\left|M^\ell(x,A)-\pi(A)\right|
=\hbox{$\frac12$}\sum_y\left|M^\ell(x,y)-\pi(y)\right|.
\label{27}
\end{equation}
Distance is measured in $L^2$ by the chi-squared distance:
\begin{equation}
\left\|M_x^\ell-\pi\right\|_2^2=\sum_y\frac{\left(M^\ell(x,y)-\pi(y)\right)^2}{\pi(y)}
=\sum_{i=1}^{|\calx|-1}\bar f_i^2(x)\beta_i^{2\ell},
\label{28}
\end{equation}
where $\barf_i$ is the eigenvector $f_i$, normalized to have $L^2$-norm $1$.
The Cauchy--Schwarz inequality shows
\begin{equation}
4\left\|M_x^\ell-\pi\right\|^2_{\text{TV}}\leq\left\|M_x^\ell-\pi\right\|_2^2.
\label{29}
\end{equation}
Using these bounds calls for getting one's hands on eigenvalues and
eigenvectors. This can be hard work, but has been done in many cases.
A central question is this: given $M,\ \epsilon>0$, and a starting
state $x$, how large must $\ell$ be so that
$\|M_x^\ell-\pi\|_{\text{TV}}<\epsilon$?

Background on the quantitative study of rates of convergence of Markov
chains is treated in the textbook of \citeauthor{bremaud}
\cite{bremaud}. The identities and inequalities that appear above are
derived in the very useful treatment by \citeauthor{saloff}
\cite{saloff}. He shows how tools of analysis can be brought to
bear. The recent monograph of \citeauthor*{levin} \cite{levin} is
readable by non-specialists and covers both analytic and probabilistic
techniques.

\subsection{Auxiliary variables}\label{sec2c}

This is a method of constructing a reversible Markov chain with $\pi$
as stationary distribution. It was invented by \citeauthor{edwards}
\cite{edwards} as an abstraction of the remarkable Swendsen--Wang
algorithm. The Swendsen--Wang algorithm was introduced as a superfast
method for simulating from the Ising and Potts models of statistical
mechanics.  It is a block-spin procedure which changes large pieces of
the current state. A good overview of such block spin algorithms is in
\cite{newman}. The abstraction to auxiliary variables is itself
equivalent to several other classes of widely used procedures, data
augmentation and the hit-and-run algorithm. For these connections and
much further literature, see \cite{pd168}.

To describe auxiliary variables, let $\pi(x)>0,\ \sum_x\pi(x)=1$ be a
probability distribution on a finite set $\calx$ Let $I$ be an
auxiliary index set. For each $x\in\calx$, let $w_x(i)$ be a
probability distribution on $I$ (the chance of moving to $i$). These
define a joint distribution $f(x,i)=\pi(x)w_x(i)$ and a marginal
distribution $m(i)=\sum_xf(x,i)$. Let $f(x|i)=f(x,i)/m(i)$ denote the
conditional distribution. The final ingredient needed is a Markov
matrix $M_i(x,y)$ with $f(x|i)$ as reversing measure
($f(x|i)M_i(x,y)=f(y|i)M_i(y,x)$ for all $x,y$). This allows for defining
\begin{equation}
M(x,y)=\sum_iw_x(i)M_i(x,y).
\label{210}
\end{equation}
The Markov chain $M$ has the following interpretation: from $x$, choose
$i\in I$ from $w_x(i)$ and then $y\in\calx$ from $M_i(x,y)$. The
resulting kernel is reversible with respect to $\pi$:
\begin{align*}
\pi(x)M(x,y)=&\sum_i\pi(x)w_x(i)M_i(x,y)=\sum_i\pi(y)w_y(i)M_i(y,x)\\
&= \pi(y)\sum_iw_y(i)M_i(y,x)=\pi(y)M(y,x).
\end{align*}

We now specialize things to $\calp_k$, the space of partitions of $k$.
Take $\calx=\calp_k,\ I=\cup_{i=1}^k\calp_i$. The stationary
distribution is as in \eqref{12}:
\begin{equation}
\pi\lamp=\piqt\lamp=\frac{Z}{z_\lam(q,t)}.
\label{211}
\end{equation}
From $\lam\in\calp_k$, the algorithm chooses some parts to delete,
call these $\lam_J$, leaving parts $\laml=\lam\backslash\lam_J$. Thus if
$\lam=322111$ and $\lam_J=31,\ \laml=2211$. We allow $\lam_J=\lam$ but
demand $\lam_J\neq\emptyset$. Clearly, $\lam$ and $\lam_J$ determine
$\laml$ and $(\lam,\laml)$ determine $\lam_J$. We let $\laml$ be the
auxiliary variable. The choice of $\laml$ given $\lam$ is made with
probability
\begin{equation}\begin{aligned}
w_\lam(\laml)&=\frac1{q^k-1}\prod_{i=1}^k\binom{a_i\lamp}{a_i(\laml)}
\left(q^i-1\right)^{a_i(\lam_J)}\\
           &=\frac1{q^k-1}\prod_{i=1}^k\binom{a_i\lamp}{a_i(\laml)}
           \left(q^i-1\right)^{a_i\lamp-a_i(\laml)}.
\end{aligned}\label{212}
\end{equation}
Thus, for $\lam=1^323^2;\ \lam_J=13,\ \laml=1^223;\
w_\lam(\laml)=\frac1{q^{11}-1}\binom{3}2\tbinom{1}1\tbinom{2}1(q-1)(q^2-1)^0(q^3-1)$. It
is shown in \ref{sec2d} below that $w_\lam(\laml)$ is a probability
distribution with a simple interpretation. Having chosen $\lam_J$ with
$0<|\lam_J|\leq k$, the algorithm chooses $\mu\vdash|\lam_J|$ with
probability $\piit(\mu)$ given in \eqref{14}. Adding these parts
to $\laml$ gives $\nu$. More carefully,
\begin{equation}
M_{\laml}(\lam,\nu)=\piit(\mu)
=\frac{t}{t-1}\frac1{z_{\mu}}\prod_i\left(1-\frac1{t^i}\right)^{a_i(\mu)}.
\label{213}
\end{equation}
Here it is assumed that $\laml$ is a part of both $\lam$ and $\nu$; the
kernel $M_{\laml}(\lam,\nu)$ is zero otherwise.

It is shown in \ref{sec2d} below that $M_{\laml}$ has a simple
interpretation which is easy to sample from. The joint density
$f(\lam,\laml)=\pi(\lambda)w_\lambda(\laml)$ is proportional to $f(\lam|\laml)$ and to
\begin{equation}
\frac{\prod_i(1-1/t^i)^{a_i\lamp}}{\prod_ii^{a_i\lamp}\left(a_i\lamp-a_i(\laml)\right)!}.
\label{214}
\end{equation}
The normalizing constant depends on $\laml$ but this is fixed in the
following. We must now check reversibility of $f(\lam|\laml),\
M_{\laml}(\lam,\nu)$. For this, compute
$f(\lam|\laml)M_{\laml}(\lam,\nu)$ (up to a constant depending on
$\laml$) as
\begin{equation*}
\frac{\prod_i(1-1/t^i)^{a_i\lamp+a_i(\nu)}}{\prod_ii^{a_i\lamp+a_i(\nu)}\left(a_i\lamp-a_i(\laml)\right)!\left(a_i(\nu)-a_i(\laml)\right)!}.
\end{equation*}
This is symmetric in $\lam,\nu$ and so equals
$f(\nu |\laml)M_{\laml}(\nu,\lam)$.  This proves the following:
\begin{prop}
 With definitions \eqref{211}--\eqref{214}, the kernel on $\calp_k$,
\begin{equation*}
M(\lam,\nu)=\sum_{\laml}w_\lam(\laml)M_{\laml}(\lam,\nu)
\end{equation*}
generates a reversible Markov chain with $\piqt\lamp$ as stationary
distribution.
\label{prop1}
\end{prop}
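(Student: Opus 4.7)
The plan is to recognize this as a direct instance of the auxiliary variables construction described in \ref{sec2c}, with state space $\calx=\calp_k$, index set $I=\bigcup_{i=1}^k\calp_i$, weights $w_\lam(\laml)$ from \eqref{212}, and conditional kernels $M_{\laml}$ from \eqref{213}. Once that identification is made, the general recipe of \ref{sec2c} automatically produces a reversible Markov chain with $\piqt$ as stationary distribution, provided I verify three items: (i) $w_\lam(\cdot)$ is a probability distribution on $I$; (ii) $M_{\laml}(\lam,\cdot)$ is a Markov kernel on $\calp_k$ for each fixed $\laml$; and (iii) $M_{\laml}$ reverses the conditional distribution $f(\lam\mid\laml)\propto \pi(\lam)w_\lam(\laml)$.

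For (i), I would exploit the multiplicative factorization of $w_\lam(\laml)$ over part sizes. Summing over all sub-multisets $\laml$ of $\lam$ reduces by the binomial theorem to $\prod_i\sum_{a_i(\laml)=0}^{a_i\lamp}\binom{a_i\lamp}{a_i(\laml)}(q^i-1)^{a_i\lamp-a_i(\laml)}=\prod_i q^{i\,a_i\lamp}=q^k$; subtracting the forbidden term $\laml=\lam$ (equal to $1$) gives $q^k-1$, exactly cancelling the prefactor $1/(q^k-1)$. A parallel and classical cycle-index identity in $t$ shows $\piit$ is a probability distribution, which makes (ii) immediate since $M_{\laml}(\lam,\cdot)$ merely samples $\mu\vdash |\lam|-|\laml|$ from $\piit$ and adjoins it to $\laml$.

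The heart of the argument is (iii), and my approach would follow the strategy sketched in the paragraph preceding the proposition: multiply out $\pi(\lam)w_\lam(\laml)$ using \eqref{11} and \eqref{212} to reach the joint form \eqref{214}, then compute $f(\lam\mid\laml)M_{\laml}(\lam,\nu)$ using \eqref{213} and verify that the resulting product is symmetric under $\lam\leftrightarrow\nu$. The main obstacle is bookkeeping: I would need to track the three sources of $t$-dependence (from $\pi(\lam)$, from the fact that all $q$-weights in $\pi\cdot w$ telescope away, and from $\piit(\mu)$) and check that the exponents of $(1-t^{-i})$ combine to $a_i\lamp+a_i(\nu)$, while the factorials $i^{a_i}a_i!$ in $z_\lam$, $z_\nu$, $z_\mu$ and the binomial coefficients in $w_\lam,w_\nu$ collapse to $\bigl(a_i\lamp-a_i(\laml)\bigr)!\bigl(a_i(\nu)-a_i(\laml)\bigr)!$ in the denominator. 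Once the symmetric form is established, detailed balance for the marginalized kernel $M$ follows by the short averaging computation displayed just before \eqref{210}, completing the proof.
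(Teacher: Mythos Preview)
Your proposal is correct and follows essentially the same route as the paper: the paper's proof is precisely the computation in the paragraph preceding the proposition, which writes out $f(\lam\mid\laml)\propto$ \eqref{214} and checks that $f(\lam\mid\laml)M_{\laml}(\lam,\nu)$ is symmetric in $\lam,\nu$, exactly your item (iii). The only minor difference is that the paper verifies your items (i) and (ii) later in \ref{sub2d3}--\ref{sub2d4} via probabilistic (coin-flipping and rejection) interpretations rather than the direct binomial-theorem and cycle-index calculations you sketch, but these are equivalent and equally short.
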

\begin{example}
 With $k=2$, let
\begin{equation*}
\piqt(2)=\frac{Z}2\frac{(t^2-1)}{(q^2-1)},\
 \piqt(1^2)=\frac{Z}2\left(\frac{t-1}{q-1}\right)^2\quad\text{for}\quad Z=\frac{(1-q)(1-q^2)}{(1-t)(1-tq)}.
\end{equation*}
From the definitions, with rows and columns labeled (2), $1^2$, the
transition matrix is
\begin{equation}
M=\begin{pmatrix}\dfrac12\left(1+\dfrac1{t}\right)&\dfrac12\left(1-\dfrac1{t}\right)\\[.2in]
\dfrac{q-1}{q+1}\dfrac12\left(1+\dfrac1{t}\right)&\dfrac{4t+(q-1)(t-1)}{2(q+1)t}\end{pmatrix}=
\frac1{2t}\begin{pmatrix}t+1&t-1\\[.1in]\dfrac{(q-1)(t+1)}{q+1}&\dfrac{4t+(q-1)(t-1)}{q+1}\end{pmatrix}.
\label{215}
\end{equation}
In this $k=2$ example, it is straightforward to check that $\piqt$ sums to $1$, the rows of $M$ sum to
$1$, and that $\piqt\lamp M(\lam,\nu)$ $=\piqt(\nu)M(\nu,\lam)$.
\end{example}

\subsection{Measures on partitions and permutations}\label{sec2d}

The measure $\piqt$ of \eqref{12} has familiar specializations: to the
distribution of conjugacy classes of a uniform permutation ($q=t$), and
the Ewens sampling  measure ($q^\alpha=t\to0$). After recalling these,
the measures $w_{\laml}(\cdot)$ and $M_{\laml}(\lam,\cdot)$ used in
the auxiliary variables algorithm are treated. Finally, there is a
brief review of the many other, nonuniform distributions used on
partitions $\calp_k$ and permutations $S_k$ . Along the way, many
results on the ``shape'' of a typical partition drawn from $\piqt$
appear.

\subsubsection{Uniform permutations ($q=t$)}\label{sub2d1}

If $\sigma$ is chosen uniformly on $S_k$, the chance that the cycle
type of $\sigma$ is $\lam$ is $1/z_\lam=\pi_{q,q}\lamp$. There is a
healthy literature on the structure of random permutations (number of
fixed points, cycles of length $i$, number of cycles, longest and
shortest cycles, order, \textellipsis). This is reviewed in
\cite{fulman,olshanski}, which also contain extensions to the
distribution of conjugacy classes of finite groups of Lie type.

One natural appearance of the measure $1/z_\lam$ comes from the
\textit{coagulation/fragmentation} process. This is a Markov chain on
partitions of $k$ introduced by chemists and physicists to study clump
sizes. Two parts are chosen with probability proportional to their
size. If different parts are chosen, they are combined. If the same
part is chosen twice, it is split uniformly into two parts. This
Markov chain has stationary distribution $1/z_\lam$. See \cite{aldous}
for a review of a surprisingly large literature and \cite{pd27} for
recent developments. These authors note that the
coagulation/fragmentation process is the random transpositions walk,
viewed on conjugacy classes. Using the Metropolis algorithm (as in
\ref{sub2d6} below) gives a similar process with stationary
distribution $\piqt$.

Algorithmically, a fast way to pick $\lam$ with probability $1/z_\lam$
is by \textit{uniform stick-breaking}: Pick $U_1\in\{1,\ldots,k\}$
uniformly. Pick $U_2\in\{1,\ldots, k-U_1\}$ uniformly. Continue until the first
time $T$ that the uniform choice equals its maximum value. The
partition with parts $U_1,U_2,\dots,U_T$ equals $\lam$ with
probability $1/z_\lam$.

\subsubsection{Ewens and Jack measures}\label{sub2d2}

Set $q=t^\alpha$ and let $t\to1$. Then $\piqt\lamp$ converges to
\begin{equation}
\pi_\alpha\lamp=\frac{Z}{z_\lam}\alpha^{-\ell\lamp},\quad
Z=\frac{\alpha^kk!}{\prod_{i=1}^{k-1}(i\alpha+1)},\quad \ell\lamp\text{ the number of parts of }\lam.
\label{216}
\end{equation}
In population genetics, setting $\alpha=1/\theta$, with $\theta>0$ a
``fitness parameter,'' this measure is called the \textit{Ewens
 sampling formula}. It has myriad practical appearances through its
connection with Kingman's coalescent process, and has generated a
large enumerative literature in the combinatorics and probability
community \cite{arratia,hoppe,pitman}. It also makes numerous
appearances in the statistics literature through its occurrence in
non-parametric Bayesian statistics via Dirichlet random measures and
the Dubins--Pitman Chinese restaurant process
\cite{ghosh03}, \cite[sec. 3.1]{pitman}.

Algorithmically, a fast way to pick $\lam$ with probability
$\pi_{1/\theta}\lamp$ is by the Chinese restaurant
construction. Picture a collection of circular tables. Person 1 sits
at the first table. Successive people sit sequentially, by choosing to
sit to the right of a (uniformly chosen) previously seated person
(probability $\theta$) or at a new table (probability
$1-\theta$). When $k$ people have been seated, this generates the
cycles of a random permutation with probability $\pi_{1/\theta}$. It
would be nice to have a similar construction for the measures $\piqt$.

The Macdonald polynomials associated to this weight function are
called the \textit{Jack symmetric functions} \cite[VI Sect.~1]{mac}.
Hanlon \cite{han,pd86} uses properties of Jack polynomials to
diagonalize a related Markov chain; see \ref{sec4}.  When
$\alpha=1/2$, the Jack polynomials become the zonal-spherical
functions of $GL_n/O_n$. Here, an analysis closely related to the
present paper is carried out for a natural Markov chain on
perfect matchings and phylogenetic trees \cite[Chap.~X]{ceccher},
\cite{pd38}.

\subsubsection{The measure $w_\lam$}\label{sub2d3}

Fix $\lam\vdash k$ with $\ell$ parts and $q>1$. Define, for
$J\subseteq \{1,\ldots, \ell\},\ J\neq\emptyset$,
\begin{equation}
w_\lam(J)=\frac1{q^k-1}\prod_{i\in J}\left(q^{\lam_i}-1\right).
\label{217}
\end{equation}
The auxiliary variables algorithm for sampling from $\piqt$ involves
sampling from $w_\lam(J)$, and setting $\lam_J=\{\lam_i:i\in J\}$ (see
\eqref{13} and \eqref{212}). The measure $w_\lam(J)$ has the following interpretation,
which leads to a useful sampling algorithm: Consider $k$ places
divided into blocks of length $\lam_i$:
\begin{equation*}
\underbrace{- - \cdots -}_{\lam_1}\underbrace{- - \cdots -}_{\lam_2}
\cdot\underbrace{- - \cdots -}_{\lam_l} \ ,
\qquad
\lam_1+\cdots+\lam_l=k.
\end{equation*}
Flip a $1/q$ coin for each place. Let, for $1\leq i\leq \ell$,
\begin{equation}
X_i=\begin{cases}1
&\text{if the $i$th block is \textit{not} all ones,}\\
0 &\text{otherwise.}\end{cases}
\label{218}
\end{equation}
Thus $P(X_i=1)=1-1/q^{\lam_i}$. Let $J=\{i:X_i=1\}$. So
$P\{J=\emptyset\}=1-1/q^k$ and
\begin{equation}
P\{J|J\neq\emptyset\}
=\frac1{1-\frac1{q^k}}\prod_{i\in J}\left(1-\frac1{q^{\lam_i}}\right)\prod_{j\in J^c}\frac1{q^{\lam_j}}=w_\lam(J).
\label{219}
\end{equation}
This makes it clear that summing $w_\lam(J)$ over all non-empty
subsets of $\{1,\ldots, \ell\}$ gives 1. 

The simple rejection algorithm for sampling from $w_\lam$ is: Flip
coins as above. If $J\neq\emptyset$, output $\lam_J=\{\lam_i:i\in J\}$.
 If $J=\emptyset$, sample again. The chance of success is $1-1/q^k$.
 Thus, unless $q$ is very close to 1, this is an efficient algorithm.

 As $q$ tends to infinity, $w_\lam$ converges to point mass at
 $J=\{1,\ldots, k\}$. As $q$ tends to one, $w_\lam$ converges to the measure
 putting mass $\lam_i/k$ on $\{i\}$.

\subsubsection{The measure $\piit$}\label{sub2d4}

Generating from the kernel $M_{\laml}(\lam,\nu)$ of \eqref{213}
with $r=|\lam\backslash\laml|$, requires generating a partition in $\calp_r$
from
\begin{equation*}
\piit(\mu)=\left(\frac{t}{t-1}\right)
\frac1{z_\mu}\prod_i\left(1-\frac1{t^i}\right)^{a_i(\mu)}.
\end{equation*}
This measure has the following interpretation: Pick $\mu^{(1)}\vdash r$
with probability $1/z_{\mu^{(1)}}$. This may be done by picking a random
permutation in $S_r$ uniformly and reporting the cycle decomposition,
or by the uniform stick-breaking of \ref{sub2d1} above. For each part
$\mu_j^{(1)}$ of $\mu^{(1)}$, flip a $1/t$ coin $\mu_j^{(1)}$ times. 
If this comes up tails at least once, and this happens simultaneously for each $i$, 
set $\mu=\mu^{(1)}$. If some part of $\mu^{(1)}$ produces all heads, start
again and choose $\mu^{(2)}\vdash r$ with probability $1/z_{\mu^{(2)}}$ \textellipsis. 
The chance of failure is $1/t$, independent of $r$. Thus,
unless $t$ is close to 1, this gives a simple, useful algorithm.

The shape of a typical pick from $\piit$ is described in the following
section. When $t$ tends to infinity, the measure converges to
$1/z_\mu$. When $t$ tends to one, the measure converges to point mass
at the one part partition $(r)$.

\subsubsection{Multiplicative measures}\label{sub2d5}

For $\bm\eta=(\eta_1,\eta_2,\dots,\eta_k),\ \eta_i>0$, define a
probability on $\calp_k$ (equivalently, $S_k$) by
\begin{equation}
\pi_{\bm\eta}\lamp=\frac{Z}{z_\lam}\prod_{i=1}^k\eta_i^{a_i\lamp}\qquad\text{with}\quad
Z^{-1}=\sum_{\mu\vdash k}\frac1{z_\mu}\prod_i\eta_i^{a_i(\mu)}.
\label{220}
\end{equation}
Such multiplicative measures are classical objects of study. They are
considered in \cite{arratia} and \cite{yaku}, where many useful
cases are given. The measures $\piqt$ fall into this class with
$\eta_i=\frac{(t^i-1)}{(q^i-1)}$. If $x=(x_1,x_2,\dots)$ and
$y=(y_1,y_2,\dots)$ are two sequences of numbers and $V_\lam(X)$ is a
multiplicative basis of $\Lambda_n^k$ such as $\{e_\lam\},\
\{p_\lam\}, \{h_\lam\}$, setting $\eta_i=V_i(x)V_i(y)$ gives
$\pi_{\bm\eta}\lamp=\frac{Z}{z_\lam}V_\lam(x)V_\lam(y)$. This is in rough
analogy to the Schur measures defined in \ref{sub2d7}. For the choices
$e_\lam,\ p_\lam,\ h_\lam$, with $x_i,\ y_j$ positive numbers, the
associated measures are positive. The power sums, with all $x_i=a,\
y_i=b$, gives the Ewens measure with $\alpha=ab$. Setting $x_1=y_1=c,\ x_i=y_j=0$
otherwise, gives the measure $1/z_\lam$ after normalization. To our knowledge,
general multiplicative measures have not been previously studied.
Multiplicative systems are studied in \cite[VI Sect.~1 Ex.]{mac}.

It is natural to try out the simple rejection algorithms of
\ref{sub2d3} and \ref{sub2d4} for the measures $\pi_{\bm\eta}$. To
begin, suppose that $0<\eta_i<1$ for all $i$. The measure
$\pi_{\bm\eta}$ has the following interpretation: Pick
$\lam'\in\calp_k$ with probability $1/z_{\lam'}$. As above, for each
part of $\lam'$ of size $i$, generate a random variable taking values
1 or 0 with probability $\eta_i,1-\eta_i$. If the values for all parts
equal 1, set $\lam=\lam'$. If not, try again. For more general
$\eta_i$, divide all $\eta_i$ by $\eta_*=\max\eta_i$, and generate from
$\eta_i/\eta_*^i$. This yields the measure $\pi_{\bm\eta}$ on partitions.

Alas, this algorithm performs poorly for $\eta_i$ and $k$ in ranges of
interest. For example, with $\eta_i=\frac{t^i-1}{q^i-1}$ for
$t=2,q=4$, when $k=10,11,12,13$, the chance of success (empirically)
is $1/2000,1/4000,1/7000,1/12000$. We never succeeded in generating a
partition for any $k\geq15$.

The asymptotic distribution of the parts of a partition chosen from
$\pi_{\bm\eta}$ when $k$ is large can be studied by classical tools of
combinatorial enumeration. For fixed values of $q,t$, these problems
fall squarely into the domain of the logarithmic combinatorial
structures studied in \cite{arratia}. A series of further results for
more general $\bm\eta$ have been developed by \citeauthor{jiang}
\cite{jiang}. The following brief survey of their results gives a good
picture of typical partitions.

Of course, the theorems vary with the choice of $\eta_i$. One
convenient condition, which includes the measure $\piqt$ for fixed $q,t>1$, is
\begin{equation}
\sum_{i=1}^\infty\left|\frac{(\eta_i-1)}{i}\right|<\infty.
\label{221}
\end{equation}
\begin{thm}
Suppose $\eta_i,\ 1\leq i<\infty$, satisfy \eqref{221}. If
$\lam\in\calp_k$ is chosen from $\pi_{\bm\eta}$ of \eqref{220}, then,
for $j$ large:
\begin{gather}
\text{For any $j$, the distribution of $(a_1\lamp,\dots,a_j\lamp)$
 converges to the distribution}\notag\\
\text{of an independent Poisson vector with parameters $\eta_i/i,\ 1\leq i\leq j$.}\label{222}\\[.1in]
\text{The number of parts of $\lam$ has mean and variance asymptotic to $\log k$}\notag\\
\text{and, normalized by its mean and standard deviation,}\label{223}\\
\text{a limiting standard normal distribution.}\notag\\[.1in]
\text{The length of the $k$ largest parts of $\lam$ converge to}\notag\\
\text{the Poisson--Dirichlet distribution }\emph{\cite{gonch,bill,logan}}.\label{224}
\end{gather}
\label{thm1}
\end{thm}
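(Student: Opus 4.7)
\medskip\noindent\textbf{Proof proposal.} The plan is to realize $\pi_{\bm\eta}$ as a Poisson process conditioned on a weighted sum, and then quote (or re-derive) the standard logarithmic-structures machinery of \cite{arratia,jiang}. Specifically, using the exponential-formula identity
\begin{equation*}
\sum_{k\geq 0}\sum_{\lam\vdash k}\frac{1}{z_\lam}\prod_i\eta_i^{a_i\lamp}\,x^k
\;=\;\exp\!\left(\sum_{i\geq 1}\frac{\eta_i x^i}{i}\right),
\end{equation*}
one checks directly that if $Y_1,Y_2,\dots$ are independent with $Y_i\sim\mathrm{Poisson}(\eta_i/i)$, then the joint law of the multiplicities $(a_1\lamp,a_2\lamp,\dots)$ under $\pi_{\bm\eta}$ coincides with the conditional law of $(Y_1,Y_2,\dots)$ given the ``size constraint'' $T_k:=\sum_{i\geq 1} i Y_i = k$. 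Condition \eqref{221} is precisely the ``logarithmic class'' hypothesis of \cite{arratia}: it guarantees that $\eta_i\to 1$ rapidly enough that the conditioned process is a bounded perturbation of the uniform case $\eta_i\equiv 1$ (which corresponds to the cycle structure of a uniform random permutation).

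Statement \eqref{222} then reduces to the standard ``conditioning relation'' bound: one shows $\|\mathcal L(a_1,\dots,a_j\mid T_k=k)-\mathcal L(Y_1,\dots,Y_j)\|_{\mathrm{TV}}\to 0$. The cleanest route is to write this total variation distance as $\mathbb{E}\bigl|\,{P(T_k^{(>j)}=k-\sum_{i\le j} i Y_i)}/{P(T_k=k)}-1\bigr|$, where $T_k^{(>j)}=\sum_{i>j}iY_i$, and invoke local-limit estimates for $T_k$ of the type established in \cite{arratia} under \eqref{221}. Statement \eqref{223} is then an immediate consequence: the number of parts equals $K_k=\sum_{i=1}^k Y_i$ conditional on $T_k=k$, its unconditional mean and variance are $\sum_{i=1}^k \eta_i/i = \log k + O(1)$ and similarly for the variance (both by \eqref{221} and $\sum_{i\le k}1/i=\log k+O(1)$), and a conditional CLT follows from the unconditional Lindeberg CLT plus the same local-limit estimate for $T_k$.

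For \eqref{224}, the natural approach is via size-biased sampling of the parts. Order the parts $\lam_1\geq\lam_2\geq\cdots$, and let $L_1=\lam_{I}$ where $I$ is chosen with probability proportional to $\lam_I$; continue with $L_2$ from the remaining parts, etc. Under \eqref{221} one shows that $L_j/k$ converges in distribution to $V_1(1-V_2)\cdots(1-V_j)$ for $V_j$ i.i.d.\ uniform on $[0,1]$, which is the GEM representation of Poisson--Dirichlet with parameter $1$; re-ordering gives \eqref{224}. This convergence of size-biased picks for logarithmic assemblies is carried out in \cite{arratia}, and the extension to the weights entering $\piqt$ is covered by \cite{jiang}; the key input is again that after conditioning on $T_k=k$, the large parts behave as in the uniform permutation case because $\eta_i\to 1$.

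The main technical obstacle is uniform control of the conditional probability $P(T_k=k)$ and of $P(T_k^{(>j)}=\cdot)$. For uniform permutations ($\eta_i\equiv 1$) these are classical (Cauchy's formula gives $P(T_k=k)=1/k$ in the appropriate scaling), but under general \eqref{221} one needs a renewal-type or Tauberian argument to transfer the bounds. Once this local-limit control is in place, the three conclusions are formal consequences, and one can simply invoke the relevant theorems of \cite{arratia,jiang}; no further combinatorial input specific to the weight $\eta_i=(t^i-1)/(q^i-1)$ is required.
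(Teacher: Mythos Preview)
The paper does not actually prove this theorem: it is stated as a ``brief survey'' of results taken from \cite{arratia} and \cite{jiang}, with no argument given in the text beyond the remark that ``these problems fall squarely into the domain of the logarithmic combinatorial structures studied in \cite{arratia}.'' So there is no in-paper proof to compare against.

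Your outline is the standard route taken in those references: the conditioning relation realizing $\pi_{\bm\eta}$ as independent Poissons $Y_i\sim\mathrm{Poisson}(\eta_i/i)$ conditioned on $\sum iY_i=k$, a local-limit estimate for the size $T_k$ to decouple small parts (giving \eqref{222} and \eqref{223}), and a size-biased/GEM argument for the large parts (giving \eqref{224}). That is exactly the Arratia--Barbour--Tavar\'e program, so in spirit you are reproducing what the paper is citing rather than offering an alternative. One small caution: condition \eqref{221} is not literally ``the logarithmic class hypothesis'' of \cite{arratia}; it is a convenient sufficient condition ensuring $\eta_i$ is close enough to $1$ in an $\ell^1$ sense that the perturbation from the uniform-permutation case is summable. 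You should be explicit about which of the several hypotheses in \cite{arratia} (e.g., the uniform logarithmic condition, or the weaker $\eta_i\to\theta$ condition) you are invoking, since the strength of the local-limit input you need for \eqref{222}--\eqref{224} depends on this.
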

These and other results from \cite{jiang,arratia} show that the parts
of a random partition are quite similar to the cycles of a unformly
chosen random permutation, with the small cycles having slightly
adjusted parameters. These results are used to give a lower bound on
the mixing time of the auxiliary variables Markov chain in Proposition
\ref{prop21} below.

\subsubsection{Simulation results}\label{sub2d6}

\begin{table}[htb]
\caption{}
\begin{small}\begin{center}\begin{tabular}{ll|ll}
Partition $\lambda\vdash10$ & Probability $\pi_{q=2,t=4}(\lambda)$&Partition $\lambda\vdash10$ & Probability $\pi_{q=2,t=4}(\lambda)$\\
\hline
10	&	0.164003&4,4,2	&	0.018177 \\
9,1	&	0.121365&4,4,1,1&	0.010098 \\
8,2	&	0.081762&4,3,3	&	0.016955	\\
8,1,1	&	0.045423&4,3,2,1&	0.030520	\\
7,3	&	0.068948&4,3,1,1,1&	0.005652	\\
7,2,1	&	0.062054&4,2,2,2&	0.004120	\\
7,1,1,1	&	0.011491&4,2,2,1,1&	0.006867	\\
6,4	&	0.063387&4,2,1,1,1,1&	0.001272	\\
6,3,1	&	0.053214&4,1,1,1,1,1,1&	0.000047	\\
6,2,2	&	0.021552&3,3,3,1&	0.004745	\\
6,2,1,1	&	0.023946&3,3,2,2&	0.005765	\\
6,1,1,1,1&	0.002217&3,3,2,1,1&	0.006405	\\
5,5	&	0.030873&3,3,1,1,1,1&	0.000593	\\
5,4,1	&	0.049942&3,2,2,2,1&	0.003459	\\
5,3,2	&	0.037734&3,2,2,1,1,1&	0.001922	\\
5,3,1,1	&	0.020963&3,2,1,1,1,1,1&	0.000214	\\
5,2,2,1	&	0.016980&3,1,1,1,1,1,1,1&	0.000006	\\
5,2,1,1,1&	0.006289&2,2,2,2,2&	0.000140	\\
5,1,1,1,1,1&	0.000349&2,2,2,2,1,1&	0.000389	\\
                       &&2,2,2,1,1,1,1&	0.000144	\\
                       &&2,2,1,1,1,1,1,1&	0.000016	\\
                       &&2,1,1,1,1,1,1,1,1&	0.000001	\\
                       &&1,1,1,1,1,1,1,1,1,1&	0.000000
\end{tabular}\end{center}\end{small}
\label{table1}
\end{table}
\begin{table}[htb]
\caption{}
\begin{small}\begin{center}\begin{tabular}{l|l|l|l}
\multicolumn{4}{c}{Sample 100-step walk for Auxiliary Variables}\\
\hline
\ \,1. 10      &26. 6,4      &51. 6,2,1,1  &\ \,76. 7,2,1\\
\ \,2. 4,3,3   &27. 10       &52. 10       &\ \,77. 10\\
\ \,3. 6,3,1   &28. 4,3,2,1  &53. 7,3      &\ \,78. 7,2,1\\
\ \,4. 5,5     &29. 8,1,1    &54. 8,2      &\ \,79. 9,1\\
\ \,5. 9,1     &30. 8,2      &55. 6,2,2    &\ \,80. 5,4,1\\
\ \,6. 8,1,1   &31. 7,3      &56. 6,4      &\ \,81. 10\\
\ \,7. 6,2,2   &32. 9,1      &57. 4,2,211  &\ \,82. 6,3,1\\
\ \,8. 9,1     &33. 8,2      &58. 5,3,2    &\ \,83. 6,3,1\\
\ \,9. 4,4,2   &34. 8,2      &59. 6,4      &\ \,84. 5,4,1\\
10. 4,4,1,1  &35. 8,2      &60. 10       &\ \,85. 8,1,1\\
11. 4,3,1,1,1&36. 10       &61. 9,1      &\ \,86. 5,3,2\\
12. 7,2,1    &37. 7,1,1,1  &62. 6,3,1    &\ \,87. 5,3,1,1\\
13. 5,3,1,1  &38. 10       &63. 4,3,3    &\ \,88. 5,2,2,1\\
14. 6,4      &39. 5,3,2    &64. 10       &\ \,89. 10\\
15. 10       &40. 4,3,3    &65. 5,5      &\ \,90. 5,3,2\\
16. 5,3,2    &41. 8,2      &66. 8,2      &\ \,91. 8,2\\
17. 4,3,3    &42. 7,3      &67. 5,4,1    &\ \,92. 5,3,2\\
18. 9,1      &43. 6,3,1    &68. 3,3,2,1,1&\ \,93. 6,3,1\\
19. 7,3      &44. 10       &69. 6,4      &\ \,94. 5,4,1\\
20. 7,3      &45. 5,5      &70. 6,1,1,1,1&\ \,95. 4,3,2,1\\
21. 5,3,2    &46. 6,3,1    &71. 4,3,2,1  &\ \,96. 7,3\\
22. 5,3,1,1  &47. 8,1,1    &72. 5,4,1    &\ \,97. 7,2,1\\
23. 5,3,1,1  &48. 6,1,1,1,1&73. 10       &\ \,98. 7,2,1\\
24. 6,3,1    &49. 10       &74. 5,2,1,1,1&\ \,99. 5,2,2,1\\
25. 5,3,2    &50. 9,1      &75. 5,2,2,1  &100. 4,2,2,1,1\\
\end{tabular}\end{center}\end{small}
\label{table2}
\end{table}

The distribution $\piqt(\lam)$ can be far from uniform. An example,
with $k=10,\ q=4,\ t=2$, is shown in \ref{table1};
$\pi_{4,2}(10)\doteq0.16,\ \pi_{4,2}(1^6)\doteq0$. The auxiliary
variables algorithm for the measure $\piqt$ has been programmed by
\citeauthor{jiang} \cite{jiang}. It seems to work well over a wide
range of $q$ and $t$. A tiny example, 100 steps when $k=10,\ q=4,\
t=2$, is shown in \ref{table2}. A comparison of the simulations with
the exact distribution (easily computed from \eqref{12} when $k=10$)
shows perfect agreement. In our experiments, the choice of $q$ and $t$
does not seriously affect the running time, and simulations seem
possible for $k$ up to $10^6$.

The distribution of the largest part, for $q=4,\ t=2$ and $k=10$,
$k=100$, and $k=1000$, based on $10^6$ steps of the algorithm, is
shown in \ref{fig1}. Comparison with the limiting results of Theorem
\ref{thm1} above seems good. The blip at the right side of the figures
comes from $(k)$; the rest of the distribution follows the limit
\eqref{224} approximately.
\begin{figure}[htb]
\centering
\includegraphics[scale=0.1, trim=0in 0in 0in 0in, clip=true]{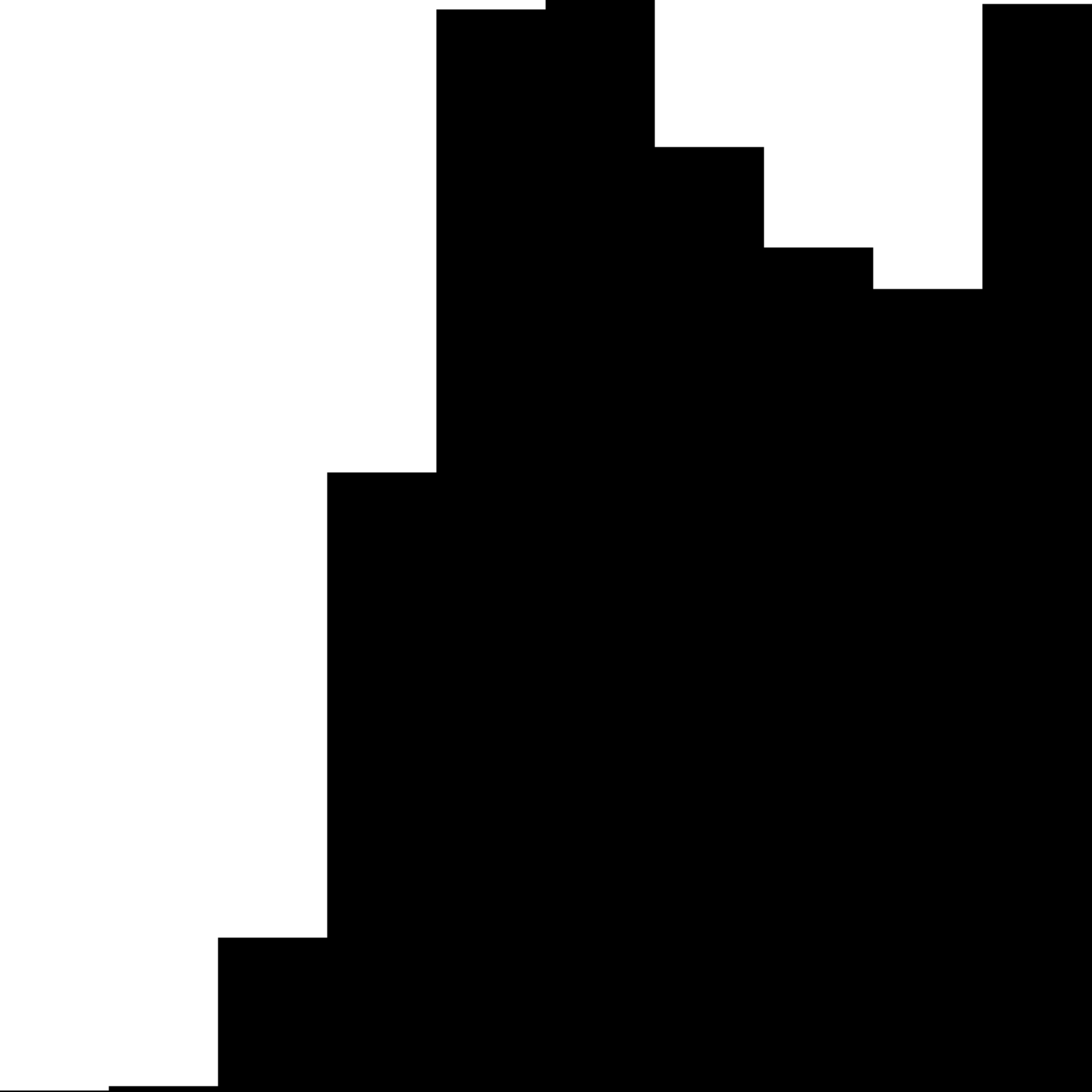}\qquad\qquad
\includegraphics[scale=0.1, trim=0in 0in 0in 0in, clip=true]{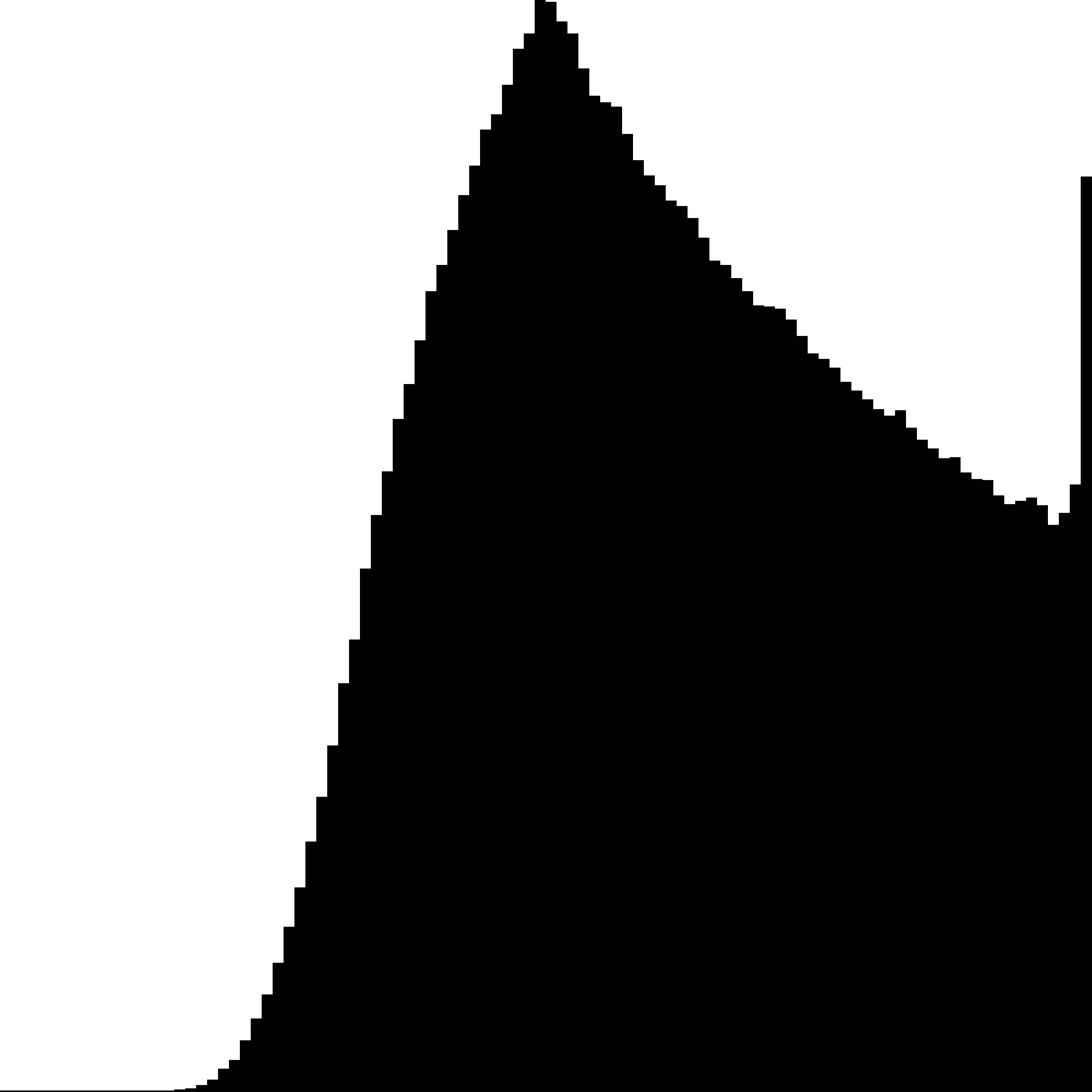}
\caption{\textit{Left}: Probability of largest part $i$ for partitions
  of $k=10$ under $\piqt$ with $q=4,\ t=2$. \textit{Right}:
  Probability of largest part $i$ for partitions of $k=100$ under
  $\piqt$ with $q=4,\ t=2$.}
\label{fig1}
\end{figure}

We have compared the auxiliary variables algorithm with the rejection
algorithm of \ref{sub2d5} and the Metropolis algorithm. As reported in
\ref{sub2d5}, rejection fails completely for $n\geq15$. The Metropolis
algorithm we used works by simulating permutations from $\piqt$ lifted
to $S_k$. From the current permutation $\sigma$, propose $\sigma'$ by
making a random transposition (all $\binom{n}{2}$ choices equally
likely). If $\piqt(\sigma')\geq\piqt(\sigma)$, move to $\sigma'$. If
$\piqt(\sigma')/\piqt(\sigma)<1$, flip a coin with probability
$\piqt(\sigma')/\piqt(\sigma)$
and move to $\sigma'$ if the coin comes up heads; else stay at
$\sigma$. For small values of $k$, Metropolis is competitive with
auxiliary variables. \citeauthor{jiang} have computed the mixing time
for $k=10,20,30,40,50$ by a clever sampling algorithm. For $q=4,t=2$,
the following table shows the number of steps required to have total
variation distance less than $1/10$ starting from the partition $(k)$.  Also 
shown is $p(k)$, the number of partitions of $k$, to give a feeling for the 
size of the state space. 
\begin{table}[htb]
\begin{center}\begin{tabular}{r|rrrrr}
$k$&10&20&30&40&50\\\hline
Aux&1&1&1&1&1\\
Met&8&17&26&37&53\\
$p(k)$&42&627&5604&37338&204,226
\end{tabular}\end{center}
\end{table}

The theorems of \ref{sec3} show that auxiliary variables requires a
bounded number of steps for arbitrary $k$. In the computations above,
the distance to stationarity after one step of the auxiliary variables
is 0.093 (within a 1\% error in the last decimal) for $k=10,\dots,50$.
For larger $k$ (e.g., $k=100$), the Metropolis algorithm seemed to
need a very large number of steps to move at all. This is consistent
with other instances of auxiliary variables, such as the Swendsen--Wang
algorithm for the Ising and Potts model (away from the critical
temperature; see \cite{borgs}).

\subsubsection{Other measures on partitions}\label{sub2d7}

This portmanteau section gives pointers to some of the many other
measures that have been studied on $\calp_k,S_k$. Often these studies
are fascinating, deep, and extensive. All measures studied here seem
distinct from $\piqt$.

A remarkable two-parameter family of measures on partitions has been
introduced by Jim Pitman. For $\theta\geq0,\ 0\leq\alpha\leq1$, and
$\lam\vdash k$ with $\ell$ parts, set
\begin{equation*}
P_{\theta,\alpha}\lamp=\frac{k!}{z_\lam}\frac{\theta^{(\alpha,\ell-1)}}{(\theta+1-\alpha)^{(1,k-1)}}\prod_{j=1}^k\left[(1-\alpha)^{(1,j-1)}\right]^{a_j\lamp},
\end{equation*}
where
\begin{equation*}
\theta^{(a,m)}=\begin{cases}1 &\text{if }m=0,\\
\theta(\theta+a)\dots(\theta+(m-1)a)&\text{for }m=1,2,3,\dots.\end{cases}
\end{equation*}
These measures specialize to $1/z_\lam\ (\theta=\ 1,\alpha=\ 0)$, and
the Ewens measure ($\theta$ fixed, $\alpha=\ 0)$, see \cite[sec.\
3.2]{pitman}. They arise in a host of probability problems connected
to stable stochastic problems of index $\alpha$. They are also being
used in applied probability connected to genetics and Bayesian
statistics. They satisfy elegant consistency properties as $k$ varies.
For example, deleting a random part gives the corresponding measure on
$\calp_{k-1}$. For these and many other developments, see the
book-length treatments of \cite{bert}, \cite[sec.\ 3.2]{pitman}.

One widely studied measure on partitions is the Plancherel measure,
\begin{equation*}
p\lamp=f\lamp^2/k!,
\end{equation*}
with $f\lamp$ the dimension of the irreducible representation of $S_k$
associated to shape $\lam$. This measure was perhaps first studied in
connection with Ulam's problem on the distribution of the length of
the longest increasing sequence in a random permutation; see
\cite{logan,versik}. For extensive developments and references, see
\cite{kerov,pd49}.

The Schur measures of \cite{ok-inf,ok-symm,ok-uses,borodin} are
generalizations of the Plancherel measure. Here the chance of $\lam$
is taken as proportional to $s_\lam(\bm{x})s_\lam(\bm{y})$, with
$s_\lam$ the Schur function and $\bm{x},\bm{y}$ collections of
real-valued entries. Specializing $\bm{x}$ and $\bm{y}$ in various
ways yields a variety of previously studied measures. One key
property, if the partition is ``tilted 135$^\circ$'' to make a v-shape
and the local maxima projected onto the $x$-axis, the resulting points
form a determinantal point process with a tractable kernel. This gives
a fascinating collection of shape theorems for the original partition.

One final distribution, the uniform distribution on $\calp_k$, has
also been extensively studied. For example, a uniformly chosen
partition has order $\pi/\sqrt{6k}$ parts of size 1, the largest part
is of size $(\sqrt{6k}/\pi)\cdot \log(\sqrt{6k}/\pi)$, the number of
parts is of size $\sqrt{6k}\log(k/(2\pi))$.  A survey with much more
refined results is in \cite{fris}.

The above only scratches the surface. The reader is encouraged to look
at \cite{ok-inf,ok-symm,ok-uses} to see the breadth and depth of the
subject as applied to Gromov--Witten theory, algebraic geometry, and
physics. The measures there seem closely connected to the ``Plancherel
dual'' of our $\piqt$. This dual puts mass proportional to $c\lamp
c'\lamp$ on $\lam$, with $c,c'$ the arm-leg length products defined in
\ref{sec3a} below.

\section{Main results}\label{sec3}

This section shows that the auxiliary variables Markov chain $M$ with
stationary distribution $\piqt\lamp,\ \lam\in\calp_k$, is explicitly
diagonalizable with eigenfunctions $f_\lam(\mu)$ essentially the
coefficients of the Macdonald polynomials expanded in the power sum
basis. The result is stated in \ref{sec3a}. The proof, given in
\ref{sec3c}, is somewhat computational. An explanatory overview is in
\ref{sec3b}. In \ref{sec5}, these eigenvalue/eigenvector results are
used to bound rates of convergence of $M$.

\subsection{Statement of main results}\label{sec3a}

Fix $q,t>1$ and $k\geq2$. Let
$M(\lam,\mu)=\sum_{\laml}w_\lam(\laml)M_{\laml}(\lam,\mu)$ be the
auxiliary variables Markov chain on $\calp_k$. Here, $w_\lam(\cdot)$
and $M_{\laml}(\lam,\mu)$ are defined in \eqref{212}, \eqref{213}, and
studied in \ref{sub2d3} and \ref{sub2d4}.
For a partition $\lambda$, let
\begin{equation}
c_\lam(q,t)=\prod_{s\in\lam}\left(1-q^{a(s)}t^{l(s)+1}\right)
\quad\text{and}\quad 
c_\lam'(q,t)=\prod_{s\in\lam}\left(1-q^{a(s)+1}t^{l(s)}\right),
\label{32}
\end{equation}
where the product is over the boxes in the shape $\lam$, and $a(s)$ is the arm
length and $l(s)$ the leg length of box $s$ \cite[VI (8.1)]{mac}.%

\begin{thm}
\item[(1)] The Markov chain $M(\lam,\nu)$ is reversible and ergodic with
 stationary distribution $\piqt\lamp$ defined in \eqref{12}. This
 distribution is properly normalized.
\item[(2)] The eigenvalues of $M$ are $\{\beta_\lam\}_{\lam\in\calp_k}$
 given by
\begin{equation*}
\beta_\lam=\frac{t}{q^k-1}\sum_{i=1}^{\ell\lamp}\left(q^{\lam_i}-1\right)t^{-i}.
\end{equation*}
Thus,
$\beta_k=1,\beta_{k-1,1}=\frac{t}{q^k-1}\left(\frac{q^{k-1}-1}{t}+\frac{q-1}{t^2}\right),\dots$.
\item[(3)] The corresponding right eigenfunctions are
\begin{equation*}
f_\lam(\rho)=\xrl(q,t)\prod_{i=1}^{\ell(\rho)}(1-q^{\rho_i})
\end{equation*}
with $\xrl(q,t)$ the coefficients occurring in the following
expansion of the Macdonald polynomials in terms of the power sums
\emph{\cite[VI (8.19)]{mac}}:
\begin{equation}
P_\lam(x;q,t)
=\frac1{c_\lam(q,t)}\sum_\rho\left[z_\rho^{-1}\prod_{i=1}^{\ell(\rho)}(1-t^{\rho_i})\xrl(q,t)\right]p_\rho(x),
\label{31}
\end{equation}

\item[(4)] The $f_\lam(\rho)$ are orthogonal in $L^2(\piqt)$ with
\begin{equation*}
\<f_\lam,f_\mu\>=\delta_{\lam\mu}c_\lam(q,t)c_\lam'(q,t)\frac{(q,q)_k}{(t,q)_k}.
\end{equation*}
\label{thm2}
\end{thm}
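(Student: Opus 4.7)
Part (1) is already largely in hand. Reversibility of $M$ and stationarity of $\piqt$ come from Proposition~\ref{prop1}. Irreducibility follows from the observation that from any $\lam$ the chain takes $\laml=\emptyset$ (positive probability) and then draws $\mu=(k)$ from $\piit$ (also positive probability), so $(k)$ is reachable in one step from every state; aperiodicity follows from a positive self-loop at $(k)$. The remaining normalization claim $\sum_{\lam\vdash k}1/z_\lam(q,t)=(t,q)_k/(q,q)_k$ is the degree-$k$ component of Macdonald's Cauchy identity \cite[VI (2.14)]{mac}, specialized to a single variable on each side.

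The heart of the theorem is to identify $M$, viewed as a linear operator on functions on $\calp_k$, with an affine rescaling of the first Macdonald operator $\dqt^1$ written in the power-sum basis of $\Lambda_n^k$ (for any $n\ge k$). Using $T_{q,x_i}p_r=p_r+(q^r-1)x_i^r$ together with the partial-fraction identity $\sum_i A_{\{i\}}(x;t)=(t^n-1)/(t-1)$ and an explicit power-sum expansion of $\sum_i A_{\{i\}}(x;t)\,x_i^r$, I would match the matrix coefficient $[p_\nu]\,\dqt^1 p_\rho$ with $t^{n-1}(q^k-1)\,M(\rho,\nu)$ plus a diagonal shift $(t^n-1)/(t-1)$. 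The binomial factors $\binom{a_i\lamp}{a_i(\laml)}$ and powers $(q^i-1)^{a_i\lamp-a_i(\laml)}$ of $w_\lam$ in \eqref{212} should emerge from the ``delete some subset of parts'' part of this expansion, while the weights $(1-1/t^i)^{a_i(\mu)}/z_\mu$ of $\piit$ in \eqref{213} come from rearranging the $A_{\{i\}}$ fractions that produce an inserted part of cycle type $\mu$.

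Granting the identification, parts (2) and (3) follow from Macdonald's eigenvalue identity \eqref{24}: $\dqt^1 P_\lam=\bigl(\sum_i q^{\lam_i}t^{n-i}\bigr)P_\lam$. A direct manipulation rewrites $\sum_i q^{\lam_i}t^{n-i}=t^{n-1}(q^k-1)\beta_\lam+(t^n-1)/(t-1)$ -- the $n$-dependent tail cancels because zero parts contribute $q^{\lam_i}-1=0$ -- yielding the stated formula for $\beta_\lam$. Expanding $P_\lam$ in the power-sum basis via \eqref{31} then shows that the map $\rho\mapsto z_\rho^{-1}\prod_i(1-t^{\rho_i})\xrl(q,t)/c_\lam(q,t)$ is a left eigenvector of $M$, and dividing by $\piqt(\rho)=Z/z_\rho(q,t)$ turns it, via reversibility (left $=\pi\cdot$ right), into the right eigenvector $f_\lam(\rho)=\xrl(q,t)\prod_i(1-q^{\rho_i})$ up to an overall $\lam$-dependent constant. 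For part (4), substitute $f_\lam$ and $\piqt(\rho)=Z/z_\rho(q,t)$ into $\<f_\lam,f_\mu\>_{L^2(\piqt)}=\sum_\rho\piqt(\rho)f_\lam(\rho)f_\mu(\rho)$: the factor $\prod_i(1-q^{\rho_i})^2$ combines with $1/z_\rho(q,t)$ to give the Macdonald weight $z_\rho^{-1}\prod_i(1-t^{\rho_i})(1-q^{\rho_i})$. Recognising the resulting sum as a multiple of $\<P_\lam,P_\mu\>_{q,t}$ and applying the norm formula $\<P_\lam,P_\lam\>_{q,t}=c'_\lam(q,t)/c_\lam(q,t)$ \cite[VI (6.19)]{mac} produces the stated normalization $c_\lam(q,t)c'_\lam(q,t)(q,q)_k/(t,q)_k$.

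The main obstacle is the explicit matching in the second paragraph: the Macdonald operator is written in terms of rational $A_{\{i\}}(x;t)$ and the shift $T_{q,x_i}$, whereas $M$ is described combinatorially via deletion multiplicities and Poisson-type insertion weights. Verifying that the two agree as matrices in the power-sum basis (after the affine rescaling) requires a careful expansion of $\sum_i A_{\{i\}}(x;t)\,x_i^r$ and a bookkeeping argument recognising the product $w_\lam(\laml)\,\piit(\mu)$ in the resulting coefficients. This reconciliation of the algebraic formulae \eqref{21}--\eqref{23} with the combinatorial construction \eqref{212}--\eqref{213} is the technical crux of \ref{sec3c}.
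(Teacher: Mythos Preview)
Your proposal is correct and follows essentially the same route as the paper: compute $D_{q,t}^1$ on the power-sum basis, recognise the result (after the affine rescaling you describe) as the transition matrix $M$, then read off eigenvalues and left eigenvectors from the expansion \eqref{31} of $P_\lam$ and convert to right eigenvectors via reversibility; the norm computation in (4) is likewise the same as the paper's, using $\langle P_\lam,P_\lam\rangle_{q,t}=c_\lam'/c_\lam$. The one place where the paper is more explicit than your sketch is the ``main obstacle'' you flag: rather than working out $\sum_i A_{\{i\}}(x;t)\,x_i^r$ from scratch, the paper invokes Macdonald's identity \cite[VI Sect.~3 Ex.~2]{mac}, namely $(t-1)\sum_i A_i(x;t)x_i^r = t^n g_r(x;0,t^{-1})-\delta_{0r}$ with $g_r(x;q,t)=\sum_{\mu\vdash r} z_\mu(q,t)^{-1}p_\mu$, which immediately produces the $\piit$ weights and makes the matching with $w_\lam(\laml)\,\piit(\mu)$ a one-line bookkeeping check.
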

\begin{example}
When $k=2$, from \eqref{215}, the matrix $M$ with rows and columns
indexed by 2, $1^2$, is
\begin{equation*}
M=\frac1{2t}\begin{pmatrix}t+1&t-1\\[.1in]
\dfrac{(q-1)(t+1)}{q+1}&\dfrac{4t+(q-1)(t-1)}{q+1}\end{pmatrix}.
\end{equation*}
Macdonald \cite[p.~359]{mac} gives tables of $K(\lam,\mu)$ for $2\leq
k\leq6$. For $k=2$, $K(\lam,\mu)$ is
$\left(\begin{smallmatrix}1&q\\t&1\end{smallmatrix}\right)$. The
character matrix is
$\left(\begin{smallmatrix}1&1\\-1&1\end{smallmatrix}\right)$, and the
product is
$\left(\begin{smallmatrix}1-q&1+q\\t-1&t+1\end{smallmatrix}\right)$.
From Theorem \ref{thm2}(3), the rows of this matrix, multiplied
coordinate-wise by $(1-q^2),(1-q)^2$, give the right eigenvectors:
\begin{align*}
f_{(2)}(2)&=f_{(2)}(1^2)=(1-q)^2(1+q),\\ 
f_{(1^2)}(2)&=(t-1)(1-q^2),\\
f_{(1^2)}(1^2)&=(t+1)(1-q)^2.
\end{align*}
Then $f_{(2)}(\rho)$ is a constant function, and $f_{(1^2)}(\rho)$ satisfies
$\sum_\rho M(\lam,\rho)f_{(1^2)}(\rho)=\beta_{(1^2)}f_{(1^2)}(\lam)$, with
$\beta_{(1^2)}=\frac{1+t^{-1}}{1+q}$.
\end{example}

Further useful formulae, used in \ref{sec5}, are \cite[VI Sect.~8
Ex.~8]{mac}:
\begin{equation}\begin{aligned}
X_\rho^{(k)}(q,t)&=(q,q)_k\prod_{i=1}^{\ell(\rho)}(1-t^{\rho_i}) \qquad
&X_\rho^{(1^k)}(q,t)&=(-1)^{|\rho|-\ell(\rho)}(t,t)_k\prod_{i=1}^{\ell(\rho)}(1-t^{\rho_i})^{-1}\\
X_{(k)}^\lam(q,t)&=\prod_{\substack{(i,j)\in \lam\\(i,j)\neq(1,1)}}\left(t^{i-1}-q^{j-1}\right) \qquad 
&X_{(1^k)}^\lam(q,t)&=\frac{c_\lam'(q,t)}{(1-t)^k}\sum_T\varphi_T(q,t)
\end{aligned}\label{33}
\end{equation}
with the sum over standard tableaux $T$ of shape $\lam$, and
$\varphi_T(q,t)$ from \cite[VI p.~341 (1)]{mac} and \cite[VI
(7.11)]{mac}.

\subsection{Overview of the argument}\label{sec3b}

Macdonald \cite[VI]{mac} defines the Macdonald polynomials as the
eigenfunctions of the operator $D_{q,t}^1:\Lambda_n\to\Lambda_n$ from \eqref{22}. 
As described in \ref{sec2a} above, $D_{q,t}^1$ is self-adjoint for the
Macdonald inner product and sends $\Lambda_n^k$ into itself \cite[VI
(4.15)]{mac}. For $\lam\vdash k,\ k\leq n$,
\begin{equation}
D_{q,t}^1\plam=\barb_\lam\plam,\quad\text{with}\quad
\barb_\lam=\sum_{i=1}^{\ell\lamp}q^{\lam_i}t^{k-i}.
\label{34}
\end{equation}
The Markov chain $M$ is related to an affine rescaling of the operator
$D_{q,t}^1$, which \cite[VI (4.1)]{mac} calls $E_n$. We work directly with
$D_{q,t}^1$ to give direct access to Macdonald's formulae. The affine
rescaling is carried out at the end of \ref{sec3c} below.

The \textit{integral form} of Macdonald polynomials \cite[VI Sect.~8]{mac}
is
\begin{equation*}
J_\lam(x;q,t)=c_\lam(q,t)\plam
\end{equation*}
for $c_\lam$ defined in \eqref{32}. Of course, the $J_\lam$ are also
eigenfunctions of $D_{q,t}^1$. The $J_\lam$ may be expressed in terms of
the shifted power sums via \cite[VI (8.19)]{mac}:
\begin{equation}
J_\lam(x;q,t)=\sum_\rho z_\rho^{-1}\xrl(q,t)p_\rho(x;t),\qquad p_\rho(x;t)
=p_\rho(x)\prod_{i=1}^{\ell(\rho)}(1-t^{\rho_i}).
\label{35}
\end{equation}
This is our equation \eqref{31} above. 
In Proposition \ref{prop21} below, we compute the action of
$D_{q,t}^1$ on the power sum basis: for $\lam$ with $\ell$ parts,
\begin{equation}\begin{aligned}
D_{q,t}^1 p_\lam
&\defeq\sum_\mu\barm(\lam,\mu)p_\mu\\
&=[n]p_\lam+\frac{t^n}{t-1}\sum_{J\subseteq\{1,\dots,\ell\}}p_{\lam_{J^c}}
\prod_{k\in J}\left(q^{\lam_k}-1\right)
\sum_{\mu\vdash|\lam_J|}\prod_m\left(1-t^{-\mu_m}\right)\frac{p_\mu}{z_\mu}.
\end{aligned}\label{36}
\end{equation}
On the right, the coefficient of $p_{\lam_{J^c}}p_\mu$ is
essentially the Markov chain $M$; we use $\barm$ for this
unnormalized version. Indeed, we first computed \eqref{36} and then
recognized the operator as a special case of the auxiliary variables
operator.

Equations \eqref{34}--\eqref{36} show that simply scaled versions of
$\xrl$ are eigenvectors of the matrix $\barm$ as follows. From
\eqref{34}, \eqref{35},
\begin{equation}\begin{aligned}
\barb_\lam P_\lam(x;q,t)&=D_{q,t}^1\plam=\frac1{c_\lam}D_{q,t}^1(J_\lam)\\
&=\frac1{c_\lam}D_{q,t}^1\left(\sum_\rho\xrl\frac1{z_\rho}p_\rho(x;t)\right)
=\frac1{c_\lam}\sum_\rho\xrl\frac{\prod(1-t^{\rho_i})}{z_\rho}D_{q,t}^1 p_\rho(x)\\
&=\frac1{c_\lam}\sum_\rho\frac{\prod(1-t^{\rho_i})}{z_\rho}\xrl\sum_\mu\barm(\rho,\mu)p_\mu(x)\\
&=\frac1{c_\lam}\sum_\mu p_\mu\sum_\rho\frac{\prod(1-t^{\rho_i})}{z_\rho}\xrl\barm(\rho,\mu).
\end{aligned}\label{37}
\end{equation}
Also, from \eqref{34} and \eqref{35},
\begin{equation}
\barb_\lam\plam=\frac{\barb_\lam}{c_\lam}J_\lam(x;q,t)=\frac{\barb_\lam}{c_\lam}\sum_\mu
X_\mu^\lam\frac1{z_\mu}\prod_i(1-t^{\mu_i})p_\mu(x).
\label{38}
\end{equation}
Equating coefficients of $p_\mu(x)$ on both sides of \eqref{37},
\eqref{38}, gives
\begin{equation}
\frac{\barb_\lam}{c_\lam}X_\mu^\lam\frac1{z_\mu}\prod_i(1-t^{\mu_i})=\frac1{c_\lam}\sum_\rho
\frac{\xrl}{z_\rho}\prod_i(1-t^{\rho_i})\barm(\rho,\mu).
\label{39}
\end{equation}
This shows that
$h_\lam(\mu)=\frac{X_\mu^\lam\prod_i(1-t^{\mu_i})}{z_\mu}$ is a left
eigenfunction for $\barm$ with eigenvalue $\barb_\lam$. It follows
from reversibility
$(\piqt(\rho)\barm(\rho,\mu)=\piqt(\mu)\barm(\mu,\rho))$ that
$h_\lam(\mu)/\piqt(\mu)$ is a right eigenfunction for $\barm$. Since
$\piqt(\mu)=Zz_\mu^{-1}(q,t)$, simple manipulations give the formulae
of part (3) of Theorem \ref{thm2}.

As explained in \ref{sec2a} above, the Macdonald polynomials
diagonalize a family of operators $D_{q,t}^r,\ 0\leq r\leq n$.  The
argument above applies to all of these. In essence, the method
consists of interpreting equations such as \eqref{35} as linear
combinations of partitions, equating $p_\lam$ with $\lam$.

\subsection{Proof of Theorem \ref{thm2}}\label{sec3c}

As in \ref{sec2a} above, let $\dqt(z)=\sum_{r=0}^n\dqt^rz^r$. Let
$[n]=\sum_{i=1}^nt^{n-i}$. The main result identifies $\dqt^1$,
operating on the power sums, as an affine transformation of the
auxiliary variables Markov chain. The following Proposition is the
first step, providing the expansion of $\dqt^1$ acting on power sums. 
A related computation is in
\cite[App.~B Prop.~2]{awata}.
\begin{prop}
\item[(a)]  If $f$ is homogeneous, then
\begin{equation*}
D_{q,t}^0f = f, \quad D_{q,t}^n f = q^{\dg(f)} f, \quad\text{and}\quad
D_{q,t}^{n-1} f = t^{\dg(f) + n(n-1)/2}q^{\dg(f)}D_{q^{-1},t^{-1}}^1 f.
\end{equation*}
\item[(b)] If $\lam=(\lam_1,\dots,\lam_\ell)$ is a partition then
\begin{multline}
D_{q,t}^1 p_\lam=[n]p_\lam+\\\sum_{J\subseteq\{ 1,\dots,\ell\}\atop J\neq\emptyset}
p_{\lam_{J^c}}\left(\prod_{k\in J}\left(q^{\lam_k}-1\right)\right)\frac{t^n}{t-1} 
\sum_{\mu\vdash |\lam_J|}\left(\prod_{m=1}^{\ell(\mu)} \left(1-t^{-\mu_m}\right)\right)\frac1{z_\mu}p_\mu.
\label{310}
\end{multline}
\label{prop21}
\end{prop}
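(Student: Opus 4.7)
The plan is to unpack formula \eqref{22}--\eqref{23} for $D_{q,t}^r$ on the power sum basis and identify the resulting symmetric function. Part (a) follows by direct bookkeeping from \eqref{22}--\eqref{23}: for $r=0$ only $I=\emptyset$ contributes with $A_\emptyset=1$ and no shifts, giving $D_{q,t}^0f=f$; for $r=n$ only $I=\{1,\dots,n\}$ contributes, the empty product in \eqref{23} gives $A_I$ a pure power of $t$, and $\prod_iT_{q,x_i}f=q^{\dg(f)}f$ by homogeneity; for $r=n-1$ the subsets $I=\{1,\dots,n\}\setminus\{i\}$ give $A_I$ equal (up to a $t$-power) to $\prod_{j\ne i}\frac{tx_j-x_i}{x_j-x_i}$, and homogeneity rewrites $\prod_{j\ne i}T_{q,x_j}f=q^{\dg(f)}T_{q^{-1},x_i}f$, which then matches the expansion of $D_{q^{-1},t^{-1}}^1f$ after collecting $t$-factors.

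For part (b) I start from $D_{q,t}^1=\sum_{i=1}^n A_{\{i\}}(x;t)T_{q,x_i}$ with $A_{\{i\}}=\prod_{j\ne i}\frac{tx_i-x_j}{x_i-x_j}$. Since $T_{q,x_i}p_m=p_m+(q^m-1)x_i^m$, expanding the product $T_{q,x_i}p_\lam=\prod_k(p_{\lam_k}+(q^{\lam_k}-1)x_i^{\lam_k})$ gives
\begin{equation*}
T_{q,x_i}p_\lam=\sum_{J\subseteq\{1,\dots,\ell\}}p_{\lam_{J^c}}\left(\prod_{k\in J}(q^{\lam_k}-1)\right)x_i^{|\lam_J|},
\end{equation*}
so that $D_{q,t}^1p_\lam=\sum_Jp_{\lam_{J^c}}\bigl(\prod_{k\in J}(q^{\lam_k}-1)\bigr)S_{|\lam_J|}$, where $S_m:=\sum_{i=1}^nA_{\{i\}}(x;t)\,x_i^m$. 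The whole problem now reduces to expanding $S_m$ in power sums: one needs $S_0=\sum_iA_{\{i\}}=[n]$ (which produces the $J=\emptyset$ piece $[n]p_\lam$) and, for $m\ge1$,
\begin{equation*}
S_m=\frac{t^n}{t-1}\sum_{\mu\vdash m}\frac{p_\mu}{z_\mu}\prod_i(1-t^{-i})^{a_i(\mu)}.
\end{equation*}

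The route to $S_m$ is via a generating-function identity. Let $\Phi(u):=\sum_i\frac{A_{\{i\}}}{1-ux_i}=\sum_{m\ge0}S_mu^m$ and $F(u):=\prod_{j=1}^n\frac{1-ux_j/t}{1-ux_j}$. A partial-fraction decomposition of $F$ --- it has only simple poles at $u=1/x_i$, with $\mathrm{Res}_{u=1/x_i}F=-\frac{t-1}{t^nx_i}A_{\{i\}}$ by a short direct calculation, together with $F(\infty)=t^{-n}$ --- yields the key identity
\begin{equation*}
\Phi(u)=\frac{t^n}{t-1}F(u)-\frac{1}{t-1}.
\end{equation*}
On the other hand, the logarithmic expansion
\begin{equation*}
F(u)=\exp\left(\sum_{k\ge1}\frac{p_k(1-t^{-k})u^k}{k}\right)=\sum_\mu\frac{p_\mu}{z_\mu}\prod_i(1-t^{-i})^{a_i(\mu)}u^{|\mu|}
\end{equation*}
converts $F$ directly to power sums. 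Reading off the $u^m$ coefficient of $\Phi$ then supplies the claimed formula for $S_m$, and the constant term $S_0=(t^n-1)/(t-1)=[n]$ falls out automatically; substituting back and separating the $J=\emptyset$ term yields \eqref{310}. The main obstacle is the partial-fraction identity between $\Phi$ and $F$; once it is in hand, the rest is a routine symmetric-function manipulation.
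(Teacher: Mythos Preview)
Your argument is correct and structurally identical to the paper's: both proofs expand $T_{q,x_i}p_\lam$ over subsets $J\subseteq\{1,\dots,\ell\}$ and reduce everything to the evaluation of $S_m=\sum_i A_{\{i\}}(x;t)x_i^m$. The only difference is in how that evaluation is carried out. The paper simply quotes two facts from Macdonald --- \cite[VI~(3.7),(3.8)]{mac} for $\sum_j A_j=[n]$ and \cite[VI~Sect.~3 Ex.~2]{mac} for $(t-1)\sum_iA_i x_i^r=t^ng_r(x;0,t^{-1})-\delta_{0r}$ --- and then substitutes. You instead prove these identities from scratch via the partial-fraction decomposition of $F(u)=\prod_j\frac{1-ux_j/t}{1-ux_j}$, which is in fact exactly how one would solve Macdonald's exercise. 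So your route is more self-contained but not conceptually different; the generating-function identity $\Phi(u)=\frac{t^n}{t-1}F(u)-\frac1{t-1}$ is the content of the two citations the paper leans on.
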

\begin{proof}[Proof of Proposition \ref{prop21}]
\item[(a)] If $f$ is homogeneous then
\begin{equation}
D_{q,t}^n f = \sum_{I\subseteq \{1,\dots,n\} \atop |I|=n}
A_I(x;t)\prod_{i\in I }T_{q,x_i} f= T_{q,x_1}T_{q,x_2}\dots T_{q,x_n} 
f=q^{\dg(f)} f.
\label{311}
\end{equation}
By definition,
\begin{equation}
A_I(x;t) = \frac1{a_\delta}\left(\prod_{i\in I} T_{t,x_i}\right) a_\delta
=t^{r(r-1)/2}\prod_{i\in I\atop j\not\in I} \frac{tx_i-x_j}{x_i-x_j}.
\label{312}
\end{equation}
Letting $x^\gamma =x_1^{\gamma_1}\cdots x_n^{\gamma_n}$ for $\gamma=(\gamma_1,\ldots, \gamma_n)$,
\begin{align*}
T_{q,x_1}T_{q,x_2}\dots\hat{T}_{q,x_j}\dots T_{q,x_n} x^\gamma
&= q^{\gamma_1+\dots+\gamma_n-\gamma_j}x_1^{\gamma_1}\dots x_n^{\gamma_n} \\
&=q^{\dg(x^\gamma)}q^{-\gamma_j}x^\gamma=q^{\dg(x^\gamma)}T_{q^{-1}, x_j}x^\gamma,
\end{align*}
and it follows that
\begin{equation}
T_{q,x_1}T_{q,x_2}\dots\hat{T}_{q,x_j}\dots T_{q,x_n} f=q^{\dg(f)} T_{q^{-1},x_j} f,
\label{313}
\end{equation}
if $f$ is homogeneous. Thus,
\begin{align*}
D_{q,t}^{n-1} f&=\sum_{I\subseteq \{1,\dots,n\} \atop |I|=n-1}A_I(x;t)\left(\prod_{i\in I }T_{q,x_i}\right) f\\
&=\sum_{j=1}^n A_{\{j\}^c}(x;t) T_{q,x_1}\dots \hat{T}_{q,x_j} \dots T_{q,x_n} f \\
&=\sum_{j=1}^n\frac1{a_\delta} T_{t,x_1}\dots \hat{T}_{t,x_j} \dots T_{t,x_n} a_\delta T_{q,x_1}\dots \hat{T}_{q,x_j} \dots T_{q,x_n} f \\
&=\sum_{j=1}^n\frac1{a_\delta} t^{\dg(f)+\dg(a_\delta)} T_{t^{-1},x_j} a_\delta q^{\dg(f)} T_{q^{-1},x_j} f \\
&=t^{\dg(f) + n(n-1)/2} q^{\dg(f)} \sum_{j=1}^n A_j\left(x;t^{-1}\right)T_{q^{-1},x_j} f \\
&=t^{\dg(f) + n(n-1)/2} q^{\dg(f)} D_{q^{-1},t^{-1}}^1 f.
\end{align*}
Hence,
\begin{equation}
D_{q,t}^{n-1} f=t^{\dg(f) + n(n-1)/2} q^{\dg(f)}D_{q^{-1},t^{-1}}^1 f.
\label{314}
\end{equation}
\item[(b)] By \cite[VI (3.7),(3.8)]{mac}
\begin{align*}
D_{1,t}(z)m_\lam&=\sum_{\beta\in S_n\lam}\left(\prod_{i=1}^n\left(1+zt^{n-i}\right)\right) s_\beta
=\left(\prod_{i=1}^n\left(1+zt^{n-i}\right)\right)\sum_{\beta\in S_n\lam} s_\beta \\
&=\left(\prod_{i=1}^n\left(1+zt^{n-i}\right)\right) m_\lam
=\sum_{r=0}^nt^{r(r-1)/2}\begin{bmatrix}n\\r\end{bmatrix}z^rm_\lam,
\end{align*}
where $m_\lam$ denotes the monomial symmetric function.
Thus, since $D_{q,t}(z)=\sum_{r=0}^nD_{q,t}^rz^r$ and 
\begin{equation*}
D_{1,t}^r = \sum_{I\subseteq \{1,\dots,n\} \atop |I|=r}
A_I(x;t)\prod_{i\in I }T_{1,x_i}= \sum_{I\subseteq \{1,\dots,n\} \atop |I|=r}A_I(x;t),
\end{equation*}
it follows that
\begin{equation}\label{D1t1f}
\sum_{j=1}^n A_j(x;t) f = D_{1,t}^1 f =[n]f
\end{equation}
for a symmetric function $f$. By \cite[VI Sect.~3 Ex.~2]{mac},
\begin{equation}\label{Dqt1xr}
(t-1)\sum_{i=1}^nA_i(x;t)x_i^r=t^ng_r\left(x;0,t^{-1}\right)-\delta_{0r}, 
\end{equation}
where, from \cite[VI (2.9)]{mac},
\begin{equation*}
g_r(x;q,t) = \sum_{\lam\vdash n} z_\lam(q,t)^{-1}p_\lam(x) , 
\end{equation*}
with $z_\lam(q,t)$ as in \eqref{11}.

Let $\lam = (\lam_1,\dots,\lam_\ell)$ be a partition, and 
\begin{equation*}
\text{for }J\subseteq \{1,\dots,\ell\},\qquad\text{let }\lam_J=(\lam_{j_1},\dots,\lam_{j_k})
\quad\text{if }J=\{j_1,\dots,j_k\}.
\end{equation*}
Then, using
\begin{equation}
T_{q,x_i} p_r = q^r x_i^r - x_i^r + p_r = (q^r-1)x_i^r + p_r,
\label{317}
\end{equation}
\eqref{Dqt1xr}, and \eqref{D1t1f},
\begin{align*}
D_{q,t}^1 p_\lam&= \sum_{j=1}^n A_j(x;t) T_{q,x_j} p_{\lam_1}\dots p_{\lam_\ell} \\
&=\sum_{j=1}^nA_j(x;t)\left(\left(q^{\lam_1}-1\right)x_j^{\lam_1}+ 
 p_{\lam_1}\right)\dots\left(\left(q^{\lam_\ell}-1\right)x_j^{\lam_\ell}+p_{\lam_\ell}\right)\\
&= \sum_{j=1}^n A_j(x;t) \sum_{J\subseteq\{ 1,\dots, \ell\}}\left(\prod_{k\in J}\left(q^{\lam_k}-1\right)\right)x_j^{|\lam_J|}\prod_{s\not\in J} p_{\lam_s}\\
&= \sum_{J\subseteq\{ 1,\dots, \ell\}}\prod_{s\not\in J} p_{\lam_s}\left(\prod_{k\in J}\left(q^{\lam_k}-1\right)\right)\sum_{j=1}^n A_j(x;t)x_j^{|\lam_J|}\\
&= \sum_{j=1}A_j(x;t)p_\lam+\sum_{J\subseteq\{ 1,\dots, \ell\}\atop J\ne \emptyset}p_{\lam_{J^c}}\left(\prod_{k\in J}\left(q^{\lam_k}-1\right)\right)
  \frac{t^n}{t-1} g_{|\lam_J|}\left(x;0,t^{-1}\right)\\
&= [n]p_\lam+\sum_{J\subseteq\{ 1,\dots, \ell\}\atop J\ne \emptyset}p_{\lam_{J^c}}\left(\prod_{k\in J}\left(q^{\lam_k}-1\right)\right)\frac{t^n}{t-1} 
  \sum_{\mu\vdash |\lam_J|} \frac1{z_\mu\left(0;t^{-1}\right)} p_\mu\\
&= [n]p_\lam+\sum_{J\subseteq\{ 1,\dots, \ell\}\atop J\ne \emptyset}p_{\lam_{J^c}}\left(\prod_{k\in J}\left(q^{\lam_k}-1\right)\right)\frac{t^n}{t-1} 
  \sum_{\mu\vdash |\lam_J|}\left(\prod_{m=1}^{\ell(\mu)} \left(1-t^{-\mu_m}\right)\right)\frac1{z_\mu} p_\mu.\qedhere
\end{align*}
\end{proof}

Let us show that the measure $\piqt\lamp$ is properly
normalized and compute the normalization of the eigenvectors.

\begin{lemma}
  Let $\piqt(\lam)$ be as in \eqref{12} and let
  $f_\lam(\rho)=\xrl\prod_{i=1}^{\ell(\rho)}(1-q^{\rho_i})$ be as in
  Theorem \ref{thm2}(3) Then
\begin{equation*}
\sum_{\lam\vdash k}\piqt\lamp=1\quad\text{and}\quad
\sum_{\rho\vdash k}f_\lambda^2(\rho)\pi_{q,t}(\lambda)=\frac{(q,q)_k}{(t,q)_k}c_\lam c_\lam'.
\end{equation*}
\label{cor1}
\end{lemma}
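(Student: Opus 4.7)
The plan is to reduce both claims to standard Macdonald polynomial identities, since the lemma is essentially a normalization unpacking of what will become part~(4) of Theorem~\ref{thm2}.

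For the first identity, observe that $\sum_{\lam\vdash k}\piqt(\lam)=1$ amounts to showing $\sum_{\lam\vdash k}z_\lam(q,t)^{-1}=(t,q)_k/(q,q)_k=Z^{-1}$. I would recognize the left-hand sum as $g_k(x;q,t)$ from Macdonald VI~(2.8) specialized at $x_1=1$, $x_i=0$ for $i\geq 2$ (which makes every $p_\lam$ evaluate to $1$). The generating function
\begin{equation*}
\sum_{k\geq 0}g_k(x;q,t)z^k=\prod_i\frac{(tx_iz;q)_\infty}{(x_iz;q)_\infty}
\end{equation*}
then collapses to $(tz;q)_\infty/(z;q)_\infty$, and the $q$-binomial theorem extracts the $z^k$-coefficient as $(t,q)_k/(q,q)_k$, which is exactly $Z^{-1}$.

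For the second identity, I would substitute $f_\lam(\rho)=X_\rho^\lam(q,t)\prod_i(1-q^{\rho_i})$ and $\piqt(\rho)=Z/z_\rho(q,t)$ and use $z_\rho(q,t)=z_\rho\prod_i(1-q^{\rho_i})/(1-t^{\rho_i})$ to cancel one factor of $(1-q^{\rho_i})$, obtaining
\begin{equation*}
\sum_{\rho\vdash k}f_\lam(\rho)^2\piqt(\rho)=Z\sum_\rho\frac{X_\rho^\lam(q,t)^2}{z_\rho}\prod_i(1-q^{\rho_i})(1-t^{\rho_i}).
\end{equation*}
The next step is to recognize the right-hand sum as $\<J_\lam,J_\lam\>_{q,t}$: using the expansion \eqref{35},
\begin{equation*}
J_\lam=\sum_\rho z_\rho^{-1}X_\rho^\lam(q,t)\prod_i(1-t^{\rho_i})\,p_\rho,
\end{equation*}
together with $\<p_\rho,p_\sigma\>_{q,t}=\delta_{\rho\sigma}z_\rho(q,t)$, a one-line bilinear expansion with the same cancellation $z_\rho(q,t)/z_\rho=\prod_i(1-q^{\rho_i})/(1-t^{\rho_i})$ gives exactly $\sum_\rho z_\rho^{-1}X_\rho^\lam(q,t)^2\prod_i(1-q^{\rho_i})(1-t^{\rho_i})$.

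To finish, I would invoke Macdonald's norm formula $\<P_\lam,P_\lam\>_{q,t}=c_\lam'(q,t)/c_\lam(q,t)$ (Macdonald VI~(6.19)). Since $J_\lam=c_\lam(q,t)P_\lam$, this gives $\<J_\lam,J_\lam\>_{q,t}=c_\lam(q,t)c_\lam'(q,t)$, and multiplying by $Z=(q,q)_k/(t,q)_k$ yields the claimed value. No step is an obstacle in principle; the only point demanding care is bookkeeping with the two different weights $z_\rho$ and $z_\rho(q,t)$ and keeping the $(1-q^{\rho_i})$ versus $(1-t^{\rho_i})$ factors straight as one passes between the $\piqt$-weighted sum defining $f_\lam^2$ and the Macdonald inner product defining $\<J_\lam,J_\lam\>_{q,t}$.
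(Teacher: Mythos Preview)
Your proof is correct. For the second identity you follow essentially the same route as the paper: expand $\langle J_\lam,J_\lam\rangle_{q,t}$ in power sums via \eqref{35}, identify the resulting sum with $\sum_\rho f_\lam(\rho)^2\piqt(\rho)$ after the $z_\rho$ versus $z_\rho(q,t)$ bookkeeping, and close with $\langle J_\lam,J_\lam\rangle=c_\lam c_\lam'$ from Macdonald VI~(6.19).

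For the first identity your argument is genuinely different from the paper's. The paper observes that $P_{(k)}=\dfrac{(q,q)_k}{(t,q)_k}\,g_k$ (Macdonald VI~(2.9),(4.9)), computes $\langle P_{(k)},P_{(k)}\rangle$ two ways, and uses the norm formula $\langle P_{(k)},P_{(k)}\rangle=c_{(k)}'/c_{(k)}=(q,q)_k/(t,q)_k$ to conclude. Your route---specializing $g_k$ at $x_1=1$, $x_i=0$ for $i\ge 2$ so that every $p_\lam$ becomes $1$, and then reading off the $z^k$-coefficient of $(tz;q)_\infty/(z;q)_\infty$ via the $q$-binomial theorem---is more elementary: it avoids the Macdonald inner product and the norm formula entirely. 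The paper's approach has the virtue of using the single identity $\langle P_\lam,P_\lam\rangle=c_\lam'/c_\lam$ uniformly for both halves of the lemma, whereas your approach makes the normalization of $\piqt$ a standalone $q$-series fact, independent of any Macdonald polynomial machinery.
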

\begin{proof}[Proof of Lemma \ref{cor1}]
  From \cite[VI (2.9),(4.9)]{mac}, the Macdonald polynomial
  $P_{(k)}(x;q,t)$ can be written
\begin{equation*}
P_{(k)}=\frac{(q,q)_k}{(t,q)_k}\cdot
g_k=\frac{(q,q)_k}{(t,q)_k}\sum_{\lam\vdash k}z_\lam(q,t)^{-1}p_\lam.
\end{equation*}
From \cite[VI (4.11),(6.19)]{mac},
$$\< P_\lam, P_\lam\> = c_\lam'/c_\lam,$$
and it follows that
\begin{align*}
\sum_{\lam\vdash k}\piqt\lamp
&=\frac{(q,q)_k}{(t,q)_k}\sum_{\lam\vdash k}z_\lam(q,t)^{-1} 
=\frac{(t,q)_k}{(q,q)_k}\left\<P_{(k)},P_{(k)}\right\>
=\frac{(t,q)_k}{(q,q)_k}\frac{c_{(k)}'}{c_{(k)}}=1.
\end{align*}
To get the normalization of
$f_\lam(\rho)=\xrl\prod_{i=1}^{\ell(\rho)}(1-q^{\rho_i})$ in Theorem
\ref{thm2}(4), use \eqref{35} and
\begin{align*}
c_\lam c_\lam'
&= (c_\lam)^2\<P_\lam,P_\lam\>
=\<J_\lam,J_\lam\> \\
&=\sum_{\rho\vdash k}z_\rho^{-2}\left(\xrl(q,t)\prod_{i=1}^{\ell(\rho)}(1-t^{\rho_i})\right)^2\<p_\rho,p_\rho\>\\
&=\sum_{\rho\vdash k}z_\rho^{-1}\left(\xrl(q,t)\prod(1-t^{\rho_i})\right)^2\prod_{i=1}^{\ell(\rho)}\frac{(1-q^{\rho_i})}{(1-t^{\rho_i})}\\
&=\sum_{\rho\vdash k}f_\lam^2(\rho)z_\rho^{-1}(q,t)
=\frac{(t,q)_k}{(q,q)_k}\sum_{\rho\vdash k}f_\lam^2(\rho)\piqt\lamp.\qedhere
\end{align*}
\end{proof}

We next show that an affine renormalization of the discrete version
$\barm$ \eqref{35} of the Macdonald operator equals the auxiliary
variables Markov chain of \ref{sec2c}. Along with Macdonald \cite[VI
(4.1)]{mac}, define
\begin{equation*}
E_k=t^{-k}\dqt^1-\sum_{i=1}^kt^{-i},\qquad\text{and let }\tilde{E}_k=\frac{t}{q^k-1}E_k, 
\end{equation*}
operating on $\Lambda_n^k$. From
\eqref{33}, the eigenvalues of $E_k$ are
$\beta_\lam=\sum_{i=1}^{\ell\lamp}(q^{\lam_i}-1)t^{-i}$. Noting that
$\beta_{(k)}=\frac{q^k-1}{t}$, the operator
$\tilde{E}_k$ is a normalization of $E_k$ with top
eigenvalue $1$. From Proposition \ref{prop21}(b), for $\lam$ a partition
with $\ell$ parts,
\begin{equation*}
\tilde{E}_kp_\lam=\frac1{(1-t^{-1})(q^k-1)}
\sum_{\substack{J\subseteq\{1,\dots,\ell\}\\J\neq\emptyset}}\prod_{k\in J}
\left(q^{\lam_k}-1\right)p_{\lam_{J^c}}
\sum_{\mu\vdash|\lam_J|}\prod_{i=1}^{\ell(\mu)}\left(1-t^{-k}\right)\frac{p_\mu}{z_\mu}. 
\end{equation*}
Using $p_\lam$ as a surrogate for $\lam$ as in \ref{sec3b}, the
coefficient of $\nu=\lam_{J^c}\mu$ is exactly $M(\lam,\nu)$ of
\ref{sec2c}.

This completes the proof of Theorem \ref{thm2}.\qed
\begin{example}
When $k=2$, from the definitions
\begin{align*}
\tilde{E}_2p_2&=\left(\frac{1-t^{-1}}2\right)p_{1^2}+\left(\frac{1+t^{-1}}2\right)p_{2},\\
\tilde{E}_2p_{1^2}&=\frac1{2(q+1)}\left((q+3-qt^{-1}+t^{-1})p_{1^2}
+(q-1)(1+t^{-1})p_{2}\right).
\end{align*}
Thus, on partitions of 2, the matrix of $\tilde{E}_2$ is
\begin{equation*}
\begin{pmatrix}\dfrac{1+t^{-1}}2&\dfrac{1-t^{-1}}2\\[.15in]
\dfrac{(1+t^{-1})(q-1)}{2(q+1)}&\dfrac{3+q+t^{-1}-t^{-1}q}{2(q+1)}\end{pmatrix}.
\end{equation*}
This is the matrix of \eqref{215} derived there from the probabilistic
description.
\end{example}

\section{Jack polynomials and Hanlon's walk}\label{sec4}

The Jack polynomials are a one-parameter family of bases for the
symmetric polynomials, orthogonal for the weight
$\<p_\lam,p_\mu\>_\alpha=\alpha^{\ell(\lam)}z_\lam\delta_{\lam\mu}$.
They are an important precursor to the full two-parameter Macdonald
polynomial theory, containing several classical bases: the limits
$\alpha=0,\ \alpha=\infty$, suitably interpreted, give the
$\{e_\lam\},\{m_\lam\}$ bases; $\alpha=1$ gives Schur functions;
$\alpha=2$ gives zonal polynomials for $GL_n/O_n$; $\alpha=\frac12$
gives zonal polynomials for $GL_n(\mathbb{H})/U_n(\mathbb{H})$ where
$\mathbb{H}$ is the quaternions (see \cite[VII]{mac}).  A good deal of
the combinatorial theory for Macdonald polynomials was first developed
in the Jack case. Further, the Jack theory has been developed in more
detail \cite{han,stanley,knop} and \cite[VI Sect.~10]{mac}.

Hanlon \cite{han} managed to interpret the differential operators
defining the Jack polynomials as the transition matrix of a Markov
chain on partitions with stationary distribution
$\pi_\alpha\lamp=Z\alpha^{-\ell\lamp}/z_\lam$, described in
\ref{sub2d2} above. In later work \cite{pd86}, this Markov chain was
recognized as the Metropolis algorithm for generating $\pi_\alpha$
from the proposal of random transpositions. This gives one of the few
cases where this important algorithm can be fully diagonalized. See
\cite{obata} for a different perspective.

Our original aim was to extend Hanlon's findings, adding a second
``sufficient statistic'' to $\ell\lamp$, and discovering a
Metropolis-type Markov chain with the Macdonald coefficients as
eigenfunctions. It did not work out this way. The auxiliary variables
Markov chain makes more vigorous moves than transpositions, and there
is no Metropolis step.  Nevertheless, as shown below, Hanlon's chain
follows from interpreting a limiting case of $D_\alpha^1$, one of
Macdonald's $D_\alpha^r$ operators. We believe that all of the
operators $D_\alpha^r$ should have interesting interpretations. In
this section, we derive Hanlon's chain from the Macdonald operator
perspective.

\subsubsection*{Overview}

There are several closely related operators used to develop the Jack
theory. Macdonald \cite[VI Sect.~3 Ex.~3]{mac} uses $\dal(u)$ and
$\dal^r$, defined by
\begin{equation}
\dal(u)=\sum_{r=0}^n\dal^ru^{n-r}
=\frac1{a_\delta}\sum_{w\in S_n}\det(w)x^{w\delta}\prod_{i=1}^n\left(u+(w\delta)_i
+\alpha x_i\frac{\partial}{\partial x_i}\right)
\label{41}
\end{equation}
where $\delta=(n-1,n-2,\dots,1,0)$, $a_\delta$ is the Vandermonde
determinant, and $x^\gamma = x_1^{\gamma_1}\cdots x_n^{\gamma_n}$ for
$\gamma=(\gamma_1,\dots,\gamma_n)$. He shows \cite[VI Sect.~3
Ex.~3c]{mac} that
\begin{equation}
\dal(u)=\lim_{t\to1}\frac{z^n}{(t-1)^n}D_{t^\alpha,t}\left(z^{-1}\right)
\qquad\text{if }z=(t-1)u-1,
\label{42}
\end{equation}
so that the Jack operators are a limiting case of Macdonald
polynomials.

Macdonald \cite[VI Sect.~4 Ex.~2b]{mac} shows that the Jack
polynomials $J_\lam^\alpha$ are eigenfunctions of $\dal(u)$ with
eigenvalues $\beta_\lam(\alpha)=\prod_{i=1}^n(u+n-i+\alpha\lam_i)$.
Stanley \cite[Pf.\ of Th.~3.1]{stanley} and Hanlon \cite[(3.5)]{han}
use $D(\alpha)$ defined as follows. Let
\begin{gather}
\partial_i=\frac{\partial}{\partial x_i},\qquad
U_n=\frac12\sum_{i=1}^nx_i^2\partial_i^2,\qquad
V_n=\sum_{i\neq j}\frac{x_i^2}{x_i-x_j}\partial_i,\label{43}\\
\text{and}\quad D(\alpha)=\alpha U_n+V_n.\label{44}
\end{gather}

Hanlon computes the action of $D(\alpha)$ on the power sums in the form (see \eqref{47})
\begin{equation}
D(\alpha)p_\lam=(n-1)rp_\lam+\alpha\binom{r}{2}\sum_\mu \ell_{\mu\lam}(\alpha)p_\mu,
\label{45}
\end{equation}
where $n$ is the number of variables and $\lam$ is a partition of $r$.

The matrix $\ell_{\mu\lam}(\alpha)$ can be interpreted as the transition
matrix of the following Markov chain on the symmetric group $S_r$. For
$w\in S_r$, set $c(w)=$ \# cycles. If the chain is currently at $w_1$,
pick a transposition $(i,j)$ uniformly; set $w_2=w_1(i,j)$. If
$c(w_2)=c(w_1)+1$, move to $w_2$. If $c(w_2)=c(w_1)-1$, move to $w_2$
with probability $1/\alpha$; else stay at $w_1$. This Markov chain has
transition matrix
\begin{equation*}
H_\alpha(w_1, w_2)=\begin{cases}
\dfrac1{\binom{r}2}&\text{if }w_2=w_1(i,j)\text{ and }c(w_2)=c(w_1)+1\\[.175in]
\dfrac1{\alpha\binom{r}2}&\text{if }w_2=w_1(i,j)\text{ and }c(w_2)=c(w_1)-1\\[.175in]
\dfrac{n(w_1)(1-\alpha^{-1})}{\binom{r}2}&\text{if }w_1=w_2\end{cases}
\end{equation*}
where $n(w_1)=\sum_i(i-1)\lam_i$ for $w_1$ of cycle type $\lam$.
Hanlon notes that this chain only depends on the conjugacy class of
$w_1$, and the induced process on conjugacy classes is still a Markov
chain for which the transition matrix is the matrix of
$\ell_{\mu\lam}(\alpha)$ of \eqref{45}.

The Jack polynomial theory now gives the eigenvalues of the Markov
chain $H_\alpha(w_1,w_2)$, and shows that the corresponding
eigenvectors are the coefficients when the Jack polynomials are
expanded in the power sum basis. The formulae available for Jack
polynomials then allow for a careful analysis of rates of convergence
to stationarity; see \cite{pd86}.

We may see this from the present perspective as follows.
\begin{prop}
  Let $\dal(u)$ and $D(\alpha)$ be defined by \eqref{41}, \eqref{44}.
\item[(a)] Let $\dal^t$ be the coefficient of $u^{n-t}$ in $\dal(u)$
 (see \emph{\cite[VI Sect.~3 Ex.~3d]{mac}}). If $f$ is a homogeneous
 polynomial in $x_1,\dots,x_n$ of degree $r$, then
\begin{equation}
\dal^0f=f,\quad\dal^1(f)=\left(\alpha r+\frac12n(n-1)\right)f,
\quad\text{and}\quad\dal^2f=(-\alpha^2U_n-\alpha V_n+c_n)f,
\label{46}
\end{equation}
where
\begin{equation*}
c_n=\frac12\alpha^2r(r-1)+\frac12\alpha rn(n-1)+\frac1{24}n(n-1)(n-2)(3n-1).
\end{equation*}
\item[(b)] From \emph{\cite[Pf.\ of Th.~3.1]{stanley}},
\begin{equation}
D(\alpha)p_\lam=\frac12 p_\lam\left(\begin{array}{l}
\displaystyle\sum_{k=1}^s\alpha\lam_k(\lam_k-1)+\alpha\sum_{j,k=1\atop j\ne k}^s\frac{\lam_j\lam_kp_{\lam_j+\lam_k}}{p_{\lam_j}p_{\lam_k}}\\
\qquad+\displaystyle\sum_{k=1}^s\lam_k(2n-\lam_k-1)+
\sum_{k=1}^s\frac{\lam_k}{p_{\lam_k}}\sum_{m=1}^{\lam_k-1} p_{\lam_k-m}p_m\end{array}\right).
\label{47}
\end{equation}
\label{prop41}
\end{prop}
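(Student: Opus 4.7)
The plan is to prove (a) by expanding the definition \eqref{41} directly and extracting coefficients of $u^{n-k}$ for $k=0,1,2$, and to prove (b) by applying the Leibniz rule to $p_\lambda=\prod_k p_{\lambda_k}$ together with the identity $p_m=\sum_i x_i^m$.

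For (a), the expansion of $\prod_{i=1}^n(u+(w\delta)_i+\alpha x_i\partial_i)$ gives, as coefficient of $u^{n-k}$, the sum $\sum_{|I|=k}\prod_{i\in I}((w\delta)_i+\alpha x_i\partial_i)$. The case $k=0$ is immediate from $\frac{1}{a_\delta}\sum_w\det(w)x^{w\delta}=1$. For $k=1$, the scalar piece contributes the $w$-independent constant $\sum_i\delta_i=n(n-1)/2$, while Euler's identity $\sum_i x_i\partial_i f=rf$ takes care of the derivative piece. For $k=2$, split the expansion of $\sum_{i<j}[(w\delta)_i+\alpha x_i\partial_i][(w\delta)_j+\alpha x_j\partial_j]$ into a scalar piece, a mixed piece, and a pure differential piece. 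The scalar piece is $e_2(w\delta)=e_2(\delta)$, and a short power-sum computation with $(0,1,\dots,n-1)$ gives $e_2(\delta)=\frac{n(n-1)(n-2)(3n-1)}{24}$, matching the constant term in $c_n$. The pure differential piece is handled by rewriting $2\sum_{i<j}x_ix_j\partial_i\partial_j=(\sum_i x_i\partial_i)^2-\sum_i x_i\partial_i-\sum_i x_i^2\partial_i^2$ and applying Euler, which yields $\frac{\alpha^2 r(r-1)}{2}-\alpha^2 U_n$ on $f$.

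The crux is the mixed piece. Reindexing $\sum_{i<j}[(w\delta)_i x_j\partial_j+(w\delta)_j x_i\partial_i]=\sum_{j\neq i}(w\delta)_i x_j\partial_j$ and using $\sum_i(w\delta)_i=n(n-1)/2$ separates it into the $w$-independent scalar $\frac{n(n-1)}{2}\sum_j x_j\partial_j$ and the residual $-\sum_j(w\delta)_j x_j\partial_j$. The key identity is
$$\frac{1}{a_\delta}\sum_w\det(w)(w\delta)_k x^{w\delta}=\frac{x_k\partial_k a_\delta}{a_\delta}=\sum_{l\neq k}\frac{x_k}{x_k-x_l},$$
which follows from $(w\delta)_k x^{w\delta}=x_k\partial_k x^{w\delta}$ and the logarithmic derivative of the Vandermonde. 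Applying this and summing over $k$ converts the residual into precisely $-\alpha V_n$. Collecting the three pieces yields $D_\alpha^2 f=-\alpha^2 U_n f-\alpha V_n f+c_n f$ with the advertised constant.

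For (b), compute $U_n p_\lambda$ by applying Leibniz twice: the diagonal $(\partial_i^2$ hitting a single factor$)$ terms give $\sum_k\lambda_k(\lambda_k-1)p_\lambda$, and the off-diagonal terms produce $\sum_{j\neq k}\lambda_j\lambda_k p_{\lambda_j+\lambda_k}\prod_{m\neq j,k}p_{\lambda_m}$, which matches the first two bracketed terms of \eqref{47}. For $V_n p_\lambda$, one application of Leibniz gives the factor $\lambda_k x_i^{\lambda_k-1}\prod_{m\neq k}p_{\lambda_m}$, so everything reduces to evaluating $\sum_{i\neq j}\tfrac{x_i^{M}}{x_i-x_j}$ for $M=\lambda_k+1$. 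Symmetrizing in $i\leftrightarrow j$ and using the factorization $\frac{x_i^M-x_j^M}{x_i-x_j}=\sum_{s=0}^{M-1}x_i^{M-1-s}x_j^s$ gives
$$\sum_{i\neq j}\frac{x_i^M}{x_i-x_j}=\frac{1}{2}\Bigl[(2n-M)p_{M-1}+\sum_{s=1}^{M-2}p_sp_{M-1-s}\Bigr],$$
and substituting this back into $V_n p_\lambda$ and regrouping produces the last two bracketed terms of \eqref{47}.

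The main obstacle is the $D_\alpha^2$ computation, specifically identifying the mixed term with $V_n$ via the Vandermonde logarithmic-derivative identity; once that identity is in hand, everything else is bookkeeping (Euler's relation, elementary symmetric functions of $\delta$, and the symmetrization trick used for (b)).
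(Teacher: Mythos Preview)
Your proof is correct and follows essentially the same approach as the paper: the same three-way split of the $u^{n-2}$ coefficient into scalar, mixed, and pure-differential pieces, the same Vandermonde logarithmic-derivative identity $\frac{1}{a_\delta}\sum_w\det(w)(w\delta)_k x^{w\delta}=\sum_{l\neq k}\frac{x_k}{x_k-x_l}$ to identify the mixed piece with $V_n$, and the same Leibniz-plus-symmetrization argument for part~(b). The only cosmetic difference is that the paper organizes the $D_\alpha^2$ computation by powers of $\alpha$ rather than by ``scalar/mixed/differential,'' but the underlying identities and the order of the steps are identical.
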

\begin{remark}
 From part (a), up to affine rescaling, $\dal^2$ is the
 Stanley--Hanlon operator. From part (b), this operates on the power
 sums in precisely the way that the Metropolis algorithm operates.
 Indeed, multiplying a permutation $w$ by a transposition $(i,j)$
 changes the number of cycles by one; the change takes place by
 fusing two cycles (the first term in \eqref{47}) or by breaking one
 of the cycles in $w$ into parts (the second term in \eqref{47}). The
 final term constitutes the ``holding'' probability from the
 Metropolis algorithm.
\end{remark}
\begin{proof}[Proof of Proposition \ref{prop41}]
\item[(a)] $\dal^0$ is the cofficient of $u^n$ in $\dal(u)$, so
\begin{equation*}
\dal^0f = \frac1{a_\delta} \sum_{w \in S_n}\det(w) x^{w\delta} f= \frac1{a_\delta} a_\delta f = f.
\end{equation*}
$\dal^1$ is the coefficient of $u^{n-1}$ in $\dal(u)$, so
\begin{align*}
\dal^1f&=\frac1{a_\delta} \sum_{w \in S_n}\det(w)x^{w\delta}
\sum_{i=1}^n\left((w\delta)_i+\alpha x_i\partial_i\right)f\\
      &=\frac1{a_\delta}\sum_{w\in S_n}\det(w)x^{w\delta}\left(\alpha r+\sum_{i=1}^n(n-i)\right)f
       =\frac{a_\delta}{a_\delta}\left(\alpha r+\binom{n}{2}\right)f.
\end{align*}
$\dal^2$ is the coefficient of $u^{n-2}$ in $\dal(u)$, so
\begin{equation}\label{Dalpha2}
\dal^2 f = \frac1{a_\delta} \sum_{w\in S_n} \left( \det(w)x^{w\delta} \prod_{1 \leq i<j \leq n} 
\left((w\delta)_i + \alpha x_i\partial_i\right) 
\left((w\delta)_j + \alpha x_j\partial_j\right) \right)f.
\end{equation}
Since $x_i\partial_i x_j\partial_j=x_j\partial_j x_i\partial_i$ for
all $1\leq i,\ j\leq n$, and $x_i\partial_i
x_j\partial_j=x_ix_j\partial_i\partial_j$ for $i\neq j$,
\begin{equation*}
r^2=\left(\sum_{i=1}^n x_i \partial_i \right)^2= \sum_{i=1}^n x_i  \partial_i x_i\partial_i
+ 2\sum_{1\leq i<j\leq n}^n x_ix_j\partial_i\partial_j
\end{equation*}
so that the coefficient of $\alpha^2$ in \eqref{Dalpha2} is
\begin{align*}
\frac1{a_\delta}\sum_{w\in S_n}\det(w)x^{w\delta}\left(\sum_{1\leq i<j\leq n}x_i\partial_i x_j\partial_j\right)f
&=\sum_{1\leq i<j\leq n}x_ix_j\partial_i\partial_j f\\ 
&=\frac12\left(r^2-\sum_{i=1}^nx_i\partial_i x_i\partial_i \right)f\\
&=\frac12\left(r^2-\sum_{i=1}^nx_i\partial_i+x_i^2\partial_i^2\right)f\\
&=\left(\frac12r^2-\frac12 r-\frac12\sum_{i=1}^nx_i^2\partial_i^2\right)f\\
&=\left(\frac12(r^2-r)-U_n\right)f.
\end{align*}
The coefficient of $\alpha^0$ in equation \eqref{Dalpha2} is
\begin{align*}
\frac1{a_\delta}\sum_{w\in S_n}&\left(\det(w)x^{w\delta}\sum_{1\leq i<j\leq n}(w\delta)_i(w\delta)_j\right)f\\
&=\frac1{a_\delta}\sum_{w\in S_n}\left(\det(w)x^{w\delta}\frac12\left(\left(\sum_{i=1}^n(w\delta)_i\right)^2
    -\sum_{i=1}^n (w\delta_i)^2 \right)\right)f\\
&= \frac12 \left(\frac14n^2(n-1)^2 -\frac16n(n-1)(2n-1)\right)f\\ 
&= \frac1{24}n(n-1)(n-2)(3n-1)f
\end{align*}
since, for each $w\in S_n$,
\begin{align*}
\sum_{i=1}^n(w\delta)_i&=\sum_{i=1}^ni-1=\frac12 n(n-1)\quad\text{and}\quad
\sum_{i=1}^n(w\delta)_i^2&=\sum_{i=1}^n(i-1)^2=\frac16n(n-1)(2n-1).
\end{align*}
Since $a_\delta=\sum_{w\in S_n}\det(w)x^{w\delta}=\prod_{1\leq i<j\leq n}(x_i-x_j)$,
then, for fixed $i$,
\begin{align*}
\frac1{a_\delta}\sum_{w\in S_n}\det(w)(w\delta)_ix^{w\delta}&=\frac1{a_\delta}\sum_{w\in S_n}\det(w)x_i\partial_ix^{w\delta}\\
&=\frac1{a_\delta}x_i\partial_i\sum_{w\in S_n}\det(w)x^{w\delta}=\frac1{a_\delta}x_i\partial_ia_\delta\\
&=\frac1{a_\delta}x_i\left(\sum_{j=1}^{i-1}\frac{a_\delta}{x_j-x_i}\partial_i(x_j-x_i)
    +\sum_{j=i+1}^n\frac{a_\delta}{x_i-x_j}\partial_i(x_i-x_j)\right)\\
&=\frac1{a_\delta}x_i\left(a_\delta\sum_{j\neq i}\frac1{x_i-x_j}\right)\\
&=\sum_{j\neq i}\frac{x_i}{x_i-x_j}
\end{align*}
so that the coefficient of $\alpha$ in \eqref{Dalpha2} is
\begin{align*}
\frac1{a_\delta}&\sum_{w\in S_n} \det(w) x^{w\delta}\sum_{1 \leq i < j \leq n}\left( (w\delta)_i x_j\partial_j
  + (w\delta)_j x_i\partial_i\right) f\\
&=\frac1{a_\delta}\sum_{w\in S_n} \det(w) x^{w\delta}\left(\sum_{i=1}^nx_i\partial_i \sum_{j=1}^n (w\delta)_j
  - \sum_{i=1}^n (w\delta)_i x_i \partial_i \right)f \\
&=\frac12 n(n-1)rf-\frac1{a_\delta} \sum_{w\in S_n} \det(w) x^{w\delta}\sum_{i=1}^n (w\delta)_i x_i\partial_i f \\
&=\frac12 n(n-1)rf-\sum_{i=1}^n \left(\frac1{a_\delta} \sum_{w\in S_n} \det(w) x^{w\delta}(w\delta)_i\right) x_i\partial_i f \\
&=\frac12 n(n-1)rf-\sum_{i=1}^n\sum_{j\neq i} \frac{x_i}{x_i-x_j}x_i\partial_i f\\ 
&=\left(\frac12 rn(n-1)-V_n\right) f.
\end{align*}
\item[(b)] Since $\partial_i p_\ell = \ell x_i^{\ell-1}$,
\begin{align*}
\alpha U_n p_\lam &=
\frac{\alpha}2\sum_{i=1}^n x_i^2\partial_i^2 p_\lam
=\frac{\alpha}2\sum_{i=1}^n x_i^2\partial_i^2 p_{\lam_1}p_{\lam_2}\dots p_{\lam_s}\\
&=\frac{\alpha}2\sum_{i=1}^n x_i^2\partial_i
\left(\sum_{j=1}^s\frac{\lam_jx_i^{\lam_j-1}}{p_{\lam_j}}p_\lam\right)\\
&=\frac{\alpha}2\sum_{i=1}^n x_i^2
\left(\sum_{j,k=1\atop j\ne k}^s
\frac{\lam_j\lam_k x_i^{\lam_j-1}x_i^{\lam_k-1}}{p_{\lam_j}p_{\lam_k}}p_\lam
  +\sum_{j=1}^s\frac{\lam_j(\lam_j-1)}{p_{\lam_j}} x_i^{\lam_j-2}p_\lam\right)\\
&=\frac{\alpha}2\left(\sum_{j,k=1\atop j\ne k}^s
\sum_{i=1}^n\frac{\lam_j\lam_k x_i^{\lam_j+\lam_k}}{p_{\lam_j}p_{\lam_k}}p_\lam
  +\sum_{j=1}^s \frac{\lam_j(\lam_j-1)}{p_{\lam_j}} x_i^{\lam_j} p_\lam \right) \\
&=\frac{\alpha}2\left(\sum_{j=1}^s\lam_j(\lam_j-1)p_\lam+\sum_{j,k=1\atop j\ne k}^s\frac{\lam_j\lam_kp_{\lam_j+\lam_k}}{p_{\lam_j}p_{\lam_k}}p_\lam\right).
\end{align*}
Since
\begin{equation*}
p_{\lam_k-m}p_m=\left( \sum_{i=1}^n x_i^{\lam_k-m}\right)\left( \sum_{j=1}^n x_j^m\right)
= \sum_{i=1}^n x_i^{\lam_k} + \sum_{i,j=1\atop i\ne j}^n x_i^{\lam_k-m}x_j^m,
\end{equation*}
then
\begin{equation*}
p_{\lam_k-m}p_m - p_{\lam_k} = \sum_{i,j=1\atop i\ne j}^n x_i^{\lam_k-m}x_j^m.
\end{equation*}
Hence
\begin{align*}
V_np_\lam &= \sum_{i,j=1\atop i\ne j}^n \frac{x_i^2}{x_i-x_j}\partial_i p_\lam
= \sum_{i,j=1\atop i\ne j}^n\frac{x_i^2}{x_i-x_j}
\sum_{k=1}^s \frac{\lam_k x_i^{\lam_k-1}}{p_{\lam_k}} p_\lam \\
&= \sum_{1\leq i<j\leq n} \frac{x_i^{\lam_k+1}-x_j^{\lam_k+1}}{x_i-x_j}
\sum_{k=1}^s \frac{\lam_k}{p_{\lam_k}} p_\lam \\
&= \sum_{k=1}^s \frac{\lam_k}{p_{\lam_k}}p_\lam 
\left(\sum_{1\leq i<j\leq n} x_i^{\lam_k} + x_i^{\lam_k-1}x_j
   + \dots + x_i x_j^{\lam_k-1} + x_j^{\lam_k}\right) \\
&= \sum_{k=1}^s \frac{\lam_k}{p_{\lam_k}}p_\lam 
\frac12\left(\sum_{i,j = 1\atop i\ne j}^n x_i^{\lam_k} + x_i^{\lam_k-1}x_j
   + \dots + x_i x_j^{\lam_k-1} + x_j^{\lam_k}\right) \\
&= \sum_{k=1}^s \frac{\lam_k}{p_{\lam_k}}p_\lam 
\frac12\left((n-1)p_{\lam_k} 
+\sum_{m=1}^{\lam_k-1}\left(p_{\lam_k-m}p_m-p_{\lam_k}\right)
   +(n-1)p_{\lam_k} \right) \\
&= \sum_{k=1}^s \frac{\lam_k}{p_{\lam_k}}p_\lam 
\frac12\left(\left(2(n-1)-(\lam_k-1)\right)p_{\lam_k}
   +\sum_{m=1}^{\lam_k-1} p_{\lam_k-m}p_m \right) \\
&= \frac12\left(\sum_{k=1}^s\lam_k(2n-\lam_k-1)p_{\lam}+\sum_{k=1}^s\frac{\lam_k}{p_{\lam_k}}p_\lam\sum_{m=1}^{\lam_k-1}p_{\lam_k-m}p_m\right) \\
&= \frac12p_\lam\left(\sum_{k=1}^s\lam_k(2n-\lam_k-1)+\sum_{k=1}^s\frac{\lam_k}{p_{\lam_k}}\sum_{m=1}^{\lam_k-1}p_{\lam_k-m}p_m\right).
\end{align*}
The formula for $D(\alpha)p_\lam = (\alpha U_n + V_n)p_\lam$ now follows.
\end{proof}

\section{Rates of convergence}\label{sec5}

This section uses the eigenvectors and eigenvalues derived above to
give rates of convergence for the auxiliary variables Markov chain.
\ref{sec51} states the main results: starting from the partition
$(k)$ a bounded number of steps suffice for convergence, independent
of $k$. \ref{sec52} contains an overview of the argument and needed
lemmas. \ref{sec53} gives the proof of Theorem \ref{thm51}, and
\ref{sec54} develops the analysis starting from $(1^k)$, showing that
$\log_q(k)$ steps are needed.

\subsection{Statement of main results}\label{sec51}

Fix $q,t>1$ and $k\geq2$. Let $\calp_k$ be the partitions of $k,\
\piqt\lamp=Z/z_\lam(q,t)$ the stationary distribution defined in
\eqref{12}, and $M(\lam,\nu)$ the auxiliary variables Markov chain
defined in Proposition \ref{prop1}. The total variation distance
$\|M_{(k)}^\ell-\piqt\|_{\text{TV}}$ used below is defined in \eqref{27}.
\begin{thm}  Consider the auxiliary variables Markov chain on 
partitions of $k\ge 4$.  
Then, for all $\ell\ge 2$
\begin{equation}
4\left\|M_{(k)}^\ell-\piqt\right\|^2_{\text{TV}}\leq 
 \frac{1}{(1-q^{-1})^{3/2}(1-q^{-2})^2}
\left(\frac{1}{q}+\frac{1}{tq^{k/2}}\right)^{2\ell} + k\left(\frac{t}{t-1}\right)
\left(\frac{2}{q^{k/4}}\right)^{2\ell}.
\label{51}
\end{equation}
\label{thm51}
\end{thm}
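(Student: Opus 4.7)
The approach is the $L^2$-spectral method. By the Cauchy--Schwarz bound \eqref{29} and the spectral expansion \eqref{28}, it suffices to prove the stated estimate for the chi-squared distance
$$\|M_{(k)}^\ell-\piqt\|_2^2 \;=\; \sum_{\lam\ne(k)}\barf_\lam((k))^2\,\beta_\lam^{2\ell},$$
where $\barf_\lam$ is the $L^2(\piqt)$-normalized version of the right eigenfunction from Theorem~\ref{thm2}(3). Using Theorem~\ref{thm2}(4) together with the Macdonald identity $X_{(k)}^\lam(q,t)=\prod_{(i,j)\in\lam,\,(i,j)\ne(1,1)}(t^{i-1}-q^{j-1})$ from \eqref{33}, one has
$$\barf_\lam((k))^2 \;=\; \frac{(1-q^k)^2\,(t,q)_k}{c_\lam(q,t)\,c_\lam'(q,t)\,(q,q)_k}\prod_{\substack{(i,j)\in\lam\\ (i,j)\ne(1,1)}}(t^{i-1}-q^{j-1})^2.$$

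I would then establish two complementary eigenvalue bounds. Separating the $i=1$ term in $\beta_\lam=(q^k-1)^{-1}\sum_i(q^{\lam_i}-1)t^{-(i-1)}$, the elementary identity $(q^a-1)(q^b-1)\ge 0$ gives by induction $\sum_{i\ge2}(q^{\lam_i}-1)\le q^{k-\lam_1}-1$, and together with $(q^m-1)/(q^k-1)\le q^{m-k}$ for $m\le k$ this yields
$$\beta_\lam \;\le\; \frac{1}{q^{k-\lam_1}}+\frac{1}{t\,q^{\lam_1}}.$$
Alternatively, the cruder estimate $\sum_iq^{\lam_i}t^{-(i-1)}\le q^{\lam_1}\cdot t/(t-1)$ gives $\beta_\lam\le tq^{\lam_1}/((t-1)(q^k-1))$, which is much sharper when $\lam_1$ is small.

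I would then split the sum at $\lam_1=\lceil k/2\rceil$. For the partitions with $\lam_1\ge k/2$ the first bound is uniformly $\le 1/q+1/(tq^{k/2})$, so the heart of the argument reduces to verifying the coefficient bound
$$\sum_{\lam\ne(k):\,\lam_1\ge k/2}\barf_\lam((k))^2 \;\le\; \frac{1}{(1-q^{-1})^{3/2}(1-q^{-2})^2}.$$
The strategy is to factor $\prod_{(i,j)\ne(1,1)}(t^{i-1}-q^{j-1})^2$ according to the first row of $\lam$ versus the tail $\mu=(\lam_2,\lam_3,\ldots)$ of size $j=k-\lam_1\le k/2$. The first-row contribution pairs with the corresponding arm/leg factors of $c_\lam c_\lam'$ and collapses into ratios of $(q,q)_{\lam_1-1}$- and $(t,q)_{\lam_1-1}$-type products, while the tail contribution is a product of factors depending (essentially) only on $\mu$. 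Summing first over $\mu\vdash j$ and then over $j$ extracts a convergent $q$-Pochhammer constant, producing the first term of \eqref{51}.

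For the partitions with $\lam_1<k/2$, the second eigenvalue bound together with $k\ge 4$ yields $\beta_\lam\le 2/q^{k/4}$ after absorbing the $t/(t-1)$ factor, and Parseval's identity in $L^2(\piqt)$ states $\sum_\lam\barf_\lam((k))^2=1/\piqt((k))$; a direct calculation from $\piqt((k))=Z/z_{(k)}(q,t)$ with $z_{(k)}(q,t)=k(q^k-1)/(t^k-1)$ and $Z=(q,q)_k/(t,q)_k$ gives the simple bound $1/\piqt((k))\le kt/(t-1)$, and combined with the eigenvalue bound this produces the second term of \eqref{51}. The main obstacle is the first range: the product formulas for $c_\lam$ and $c_\lam'$ do not cancel transparently against $\prod(t^{i-1}-q^{j-1})^2$, and extracting the geometric decay in $j=k-\lam_1$ while keeping the constant clean requires careful arm/leg bookkeeping. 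I would first verify the target on the explicit families $\lam=(k-j,1^j)$ and $\lam=(k-j,j)$, where everything collapses into products of $q$-factorials, before treating a general tail $\mu\vdash j$ by recursion on its shape.
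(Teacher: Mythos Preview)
Your uniform eigenvalue bound $\beta_\lam\le 1/q+1/(tq^{k/2})$ on $\{\lam_1\ge k/2\}$ is correct, but the coefficient bound you reduce to,
\[
\sum_{\lam\ne(k):\ \lam_1\ge k/2}\barf_\lam((k))^2\ \le\ \frac{1}{(1-q^{-1})^{3/2}(1-q^{-2})^2},
\]
is the wrong target and your outlined first-row/tail factoring cannot produce it. That factoring is exactly the content of the paper's Lemma~\ref{cor51}, and what it yields is $\barf_{(k-r,\gamma)}((k))^2\le C\,\barf_\gamma((r))^2$ with a constant $C$ that is \emph{bounded}, not decaying, in $r$: there is no exponential decay in $r=k-\lam_1$ coming from the eigenvector side. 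Parseval on $\calp_r$ then gives $\sum_{\gamma\vdash r}\barf_\gamma((r))^2=1/\pi_{q,t}((r))\asymp r$, so your coefficient sum is $\sum_{r\le k/2}O(r)$, which grows with $k$ (consistently with the full Parseval identity $\sum_\lam\barf_\lam((k))^2=1/\pi_{q,t}((k))\asymp k$). In particular the factor $(1-q^{-2})^{-2}$ in \eqref{51} does \emph{not} come from a static eigenvector estimate: in the paper it arises from the eigenvalue side, via $\sum_{r\ge1}r\,q^{-2r\ell}=q^{-2\ell}/(1-q^{-2\ell})^2\le q^{-2\ell}/(1-q^{-2})^2$.

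The paper's argument keeps the $r$-dependent eigenvalue decay rather than discarding it. By monotonicity (Lemma~\ref{lem53}) one has $\beta_{(k-r,\gamma)}\le\beta_{(k-r,r)}\le q^{-r}\bigl(1+t^{-1}q^{-(k-2r)}\bigr)$, and combining this with Lemma~\ref{cor51} and Parseval on $\calp_r$ produces a bound $C\sum_r r\,\beta_{(k-r,r)}^{2\ell}$. Controlling the factor $1+t^{-1}q^{-(k-2r)}$ uniformly forces the split at $j^*\approx k/4$ (so that $k-2r\ge k/2$), not at $k/2$; your split at $k/2$ also fails to reproduce the $q^{k/4}$ appearing in the second term of \eqref{51}. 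For that tail the paper uses monotonicity again, $\beta_\lam\le\beta_{(3k/4,k/4)}\le 2/q^{k/4}$, together with the crude Parseval bound $\sum_\lam\barf_\lam((k))^2\le kt/(t-1)$; your alternative eigenvalue estimate $\beta_\lam\le tq^{\lam_1}/((t-1)(q^k-1))$ would introduce a $(t/(t-1))^{2\ell}$ rather than the single factor $t/(t-1)$ in \eqref{51}.
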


For example, if $q=4$, $t=2$, and $k=10$ the bound becomes
$1.76(.26)^{2\ell}+20(1/512)^{2\ell}$.  Thus, when $\ell=2$ the total
variation distance is at most .05 in this example.

\subsection{Outline of proof and basic lemmas}\label{sec52}

Let $\{f_\lam,\beta_\lam\}_{\lam\vdash k}$ be the eigenfunctions and
eigenvalues of $M$ given in Theorem \ref{thm2}. From \ref{sec2b}, for
any starting state $\rho$,
\begin{equation}
4\|M_\rho^\ell-\piqt\|_{\text{TV}}^2
\leq\sum_\lam\frac{\left(M^\ell(\rho,\lam)-\piqt\lamp\right)^2}{\piqt\lamp}
=
\sum_{\lam\neq(k)}\barf_\lam^2(\rho)\beta_\lam^{2\ell}
\label{52}
\end{equation}
with $\barf_\lam$ right eigenfunctions normalized to have norm one.
At the end of this subsection we prove the following:
\begin{gather}
\sum_\lam\barf_\lam^2(\rho)
=\frac{1}{\piqt(\rho)},\qquad\text{for any }\rho\in\calp_k.\label{53}\\
\left(\frac{1-t^{-k})}{1-t^{-1}}\right)\frac{1}{k\piqt(k)}
\quad\text{is an increasing sequence bounded by}\quad (1-q^{-1})^{-1/2}.
\label{54}\\
\beta_\lam\text{ is monotone increasing in the usual partial order (moving up boxes);}\label{55}\\
\text{in particular, $\beta_{k-1,1}$ is the second largest eigenvalue and all }\beta_\lam>0.\notag\\
\beta_{k-r,r}\sim\frac{2}{q^r}.\label{56}
\end{gather}

Using these results, consider the sum on the right side of \eqref{52},
for $\lam$ with largest part $\lam_1$ less than $k-r$. Using
monotonicity, \eqref{55}, and the bound \eqref{53},
\begin{equation}
\sum_{\lam:\lam_1\leq k-r}\barf_\lam^2(k)\beta_\lam^{2\ell}
\leq\left(\frac{2}{q^r}\right)^\ell\piqt^{-1}(k)\leq \frac{t}{t-1}\left(\frac{2}{q^r}\right)^\ell k,
\label{57}
\end{equation} 
By taking $r=k/4$ gives the second term on the right hand side of
\eqref{51}.

Using monotonicity again,
\begin{equation}
\sum_{\lam\ne(k)\atop \lam_1>k-j^*}\barf_\lam^2(k)\beta_\lam^{2\ell}
\leq\sum_{r=1}^{j^*}\beta_{(k-r,r)}^{2\ell}\sum_{\gamma\vdash r}\barf_{(k-r,\gamma)}^2(k).
\label{58}
\end{equation} 
The argument proceeds by looking carefully at $\barf_\lam^2$ and
showing
\begin{equation}
\barf_{(k-r,\gamma)}^2(k)\leq c\barf_{\gamma}^2(r)
\label{59}
\end{equation}
for a constant $c$. In \eqref{59} and throughout this section, $c=c(q,t)$ denotes a
positive constant which depends only on $q$ and $t$, but not on $k$. Its value
may change from line to line.
Using \eqref{53} on $\calp_r$ shows
$\sum_{\lam'\vdash r}\barf_{\lam'}^2(r)=\piqt^{-1}(r)\sim cr$. Using this
  and \eqref{56} in \eqref{58} gives an upper bound
\begin{equation}
\sum_{\lam\neq(k)\atop \lambda_1\ge k-j^*}\barf_\lam^2(\rho)\beta_\lam^{2\ell}
\leq c\sum_{r=1}^{j^*}\left(\frac{2}{q^r}\right)^\ell r.
\label{510}
\end{equation} 
This completes the outline for starting state $(k)$. 

This section concludes by proving the preliminary results announced
above.
\begin{lemma}
  For any $\rho\in\calp_k$, the normalized eigenfunctions
  $\barf_\lam(\rho)$ satisfy
\begin{equation*}
\sum_{\lam\vdash k}\barf_\lam(\rho)^2=\frac{1}{\piqt(\rho)}.
\end{equation*}
\label{lem52}
\end{lemma}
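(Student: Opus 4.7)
The plan is to use the fact that $M$ is reversible (and hence self-adjoint on $L^2(\pi_{q,t})$) together with the finite-dimensional spectral theorem, which gives an orthonormal eigenbasis $\{\bar f_\lambda\}_{\lambda\vdash k}$ of $L^2(\pi_{q,t})$. The identity to be proved is then just the matrix identity $U^T U = I$ following from $UU^T = I$, where $U$ is the $|\mathcal{P}_k|\times|\mathcal{P}_k|$ matrix with entries $U(\lambda,\rho)=\sqrt{\pi_{q,t}(\rho)}\,\bar f_\lambda(\rho)$. Orthonormality of the eigenfunctions reads $\sum_\rho \pi_{q,t}(\rho)\bar f_\lambda(\rho)\bar f_\mu(\rho)=\delta_{\lambda\mu}$, i.e. $UU^T=I$, and since $U$ is square this forces $U^TU=I$, which is exactly the claim after cancelling $\pi_{q,t}(\rho)$.

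Alternatively, one can give a slightly more hands-on argument via Parseval. For a fixed state $\rho\in\mathcal{P}_k$, consider the function $g_\rho\in L^2(\pi_{q,t})$ defined by $g_\rho(\nu)=\delta_{\rho,\nu}/\pi_{q,t}(\rho)$. Expanding in the orthonormal basis, $g_\rho = \sum_\lambda c_\lambda\bar f_\lambda$, the coefficients are
\begin{equation*}
c_\lambda = \langle g_\rho,\bar f_\lambda\rangle = \sum_\nu \pi_{q,t}(\nu)\frac{\delta_{\rho,\nu}}{\pi_{q,t}(\rho)}\bar f_\lambda(\nu) = \bar f_\lambda(\rho).
\end{equation*}
Applying Parseval's identity, $\sum_\lambda \bar f_\lambda(\rho)^2 = \|g_\rho\|^2 = \sum_\nu \pi_{q,t}(\nu)\bigl(\delta_{\rho,\nu}/\pi_{q,t}(\rho)\bigr)^2 = 1/\pi_{q,t}(\rho)$, as claimed.

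There is essentially no obstacle: the only ingredients are reversibility of $M$ (Proposition~\ref{prop1}), which guarantees self-adjointness on $L^2(\pi_{q,t})$, the finite-dimensional spectral theorem giving an orthonormal basis of real eigenfunctions, and Parseval. No properties special to the Macdonald eigenfunctions $\bar f_\lambda$ of Theorem~\ref{thm2} are used beyond their defining orthonormality. The lemma is therefore a purely structural fact about any finite reversible chain, and the proof reduces to one short computation.
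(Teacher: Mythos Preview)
Your proposal is correct and your second (Parseval) argument is essentially identical to the paper's own proof: the paper also defines $g(\nu)=\delta_{\rho\nu}/\pi_{q,t}(\rho)$, computes $\langle g,\bar f_\lambda\rangle=\bar f_\lambda(\rho)$, and applies the Plancherel identity to obtain $\|g\|^2=1/\pi_{q,t}(\rho)$. Your first matrix formulation ($UU^T=I\Rightarrow U^TU=I$) is just a repackaging of the same orthonormal-basis fact.
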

\begin{proof}
  The $\{\barf_\lam\}$ are orthonormal in $L^2(\piqt)$. Fix $\rho\in \calp_k$ and 
  let $\delta_\rho(\nu) = \delta_{\rho\nu}$ be the measure concentrated at $\rho$.
  Expand the function $g(\nu)=\delta_\rho(\nu)/\piqt(\rho)$ in this basis:
  $g(\nu)=\sum_\lam\<g|\barf_\lam\>\barf_\lam(\nu)$. Using the
  Plancherel identity, $\sum
  g(\nu)^2\piqt(\nu)=\sum_\lam\<g|\barf_\lam\>^2$. Here, the left side
  equals $\piqt^{-1}(\rho)$ and $\<g|\barf_\lam\>=\sum_\nu
  g(\nu)\barf_\lam(\nu)\piqt(\nu)=\barf_\lam(\rho)$. So the right side
  is the needed sum of squares.
\end{proof}

The asymptotics in \eqref{54} follow from the following lemma.
\begin{lemma}
  For $q,t>1$, the sequence
\begin{equation*}
P_k=\left(\frac{1-t^{-k}}{1-t^{-1}}\right)\frac1{k\pi_{q,t}(k)}
=\frac{(1-q^{-k})}{(1-t^{-1})}\frac{q^k}{t^k}\frac{(t,q)_k}{(q,q)_k}
=\prod_{j=1}^{k-1}\frac{1-t^{-1}q^{-j}}{1-q^{-j}}
\end{equation*}
is increasing and bounded by $\displaystyle\frac1{\sqrt{1-q^{-1}}}$.
\label{lem51}
\end{lemma}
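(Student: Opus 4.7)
I would treat the three assertions separately: the chain of identities, the monotonicity, and the upper bound.

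First, to verify the three expressions for $P_k$ agree, start from the leftmost (the definition). For the middle expression, substitute $\piqt(k)=Z/z_{(k)}(q,t)$ using $z_{(k)}(q,t)=k(1-q^k)/(1-t^k)$ and $Z=(q,q)_k/(t,q)_k$ from \eqref{11}--\eqref{12}, then apply the elementary identities $(1-t^{-k})/(1-t^k)=-t^{-k}$ and $(1-q^k)=-q^k(1-q^{-k})$ to recombine factors into $(q/t)^k\cdot(1-q^{-k})/(1-t^{-1})\cdot(t,q)_k/(q,q)_k$. For the product form I would induct on $k$: the base case $P_1=1$ matches the empty product, and the inductive step computes $P_{k+1}/P_k$ from the middle expression using the one-step recursions $(t,q)_{k+1}=(t,q)_k(1-tq^k)$ and $(q,q)_{k+1}=(q,q)_k(1-q^{k+1})$. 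After cancellation (noting $(1-q^{-k-1})/(1-q^{-k})=(q^{k+1}-1)/(q(q^k-1))$ and $(1-q^{k+1})=-(q^{k+1}-1)$) the ratio collapses to $(1-t^{-1}q^{-k})/(1-q^{-k})$, which is exactly the new factor in the telescoping product.

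Second, monotonicity is then immediate: with $P_{k+1}/P_k=(1-t^{-1}q^{-k})/(1-q^{-k})$, the hypothesis $t>1$ gives $t^{-1}<1$, so $t^{-1}q^{-k}<q^{-k}$, whence $1-t^{-1}q^{-k}>1-q^{-k}>0$ and $P_{k+1}>P_k$.

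Third, for the upper bound I would take logarithms of the product form. Writing each factor as $1+a_j$ with $a_j=(1-t^{-1})q^{-j}/(1-q^{-j})>0$ and applying $\log(1+x)\le x$ for $x>0$ yields
\begin{equation*}
\log P_\infty\le(1-t^{-1})\sum_{j\ge 1}\frac{q^{-j}}{1-q^{-j}}.
\end{equation*}
Expanding $(1-q^{-j})^{-1}=\sum_{m\ge 0}q^{-jm}$ and interchanging summation converts this to $\sum_{m\ge 1}(1-t^{-m})/[m(q^m-1)]$, which is finite for $q>1$. The main obstacle is to pin down the sharp constant $(1-q^{-1})^{-1/2}$: the naive estimate $(1-q^{-j})^{-1}\le(1-q^{-1})^{-1}$ only yields $P_\infty\le\exp\bigl((1-t^{-1})q^{-1}/(1-q^{-1})^2\bigr)$, so to reach the square-root form one must exploit the cancellation between the $(1-t^{-m})$ damping and the $1/(q^m-1)$ tail (or apply an AM--GM/Cauchy--Schwarz argument on the product before taking logarithms). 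This tightening of constants is the only nontrivial step; the rest is bookkeeping.
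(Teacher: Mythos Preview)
Your treatment of the identities and the monotonicity is correct and matches the paper, which simply asserts the identities follow from the definitions and observes that each ratio $(1-t^{-1}q^{-k})/(1-q^{-k})$ exceeds~$1$.

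For the upper bound there are two issues. First, a slip: the $\log(1+x)\le x$ route you describe gives
\[
\log P_\infty \;\le\; (1-t^{-1})\sum_{j\ge 1}\frac{q^{-j}}{1-q^{-j}} \;=\; (1-t^{-1})\sum_{m\ge 1}\frac{1}{q^m-1},
\]
not $\sum_{m\ge 1}(1-t^{-m})/[m(q^m-1)]$. The latter is in fact the \emph{exact} value of $\log P_\infty$, and it is what the paper obtains by a different and sharper route: expand each of $\log(1-t^{-1}q^{-j})$ and $\log(1-q^{-j})$ as its full Taylor series $-\sum_{n\ge1}x^n/n$, subtract, and sum the resulting double series over $j$ first.

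Second, your caution about the constant is well placed. From this exact expression the paper asserts termwise that $(1-t^{-n})/(q^n-1)\le 1/(2q^n)$, whence $\log P_\infty\le -\tfrac12\log(1-q^{-1})$. But that termwise inequality is false (take $q=t=2$, $n=1$: left side $1/2$, right side $1/4$), and indeed the stated bound cannot hold uniformly in $t$: when $q=t$ the product telescopes to $P_k=(1-q^{-k})/(1-q^{-1})\to(1-q^{-1})^{-1}>(1-q^{-1})^{-1/2}$. So the step you flagged as nontrivial is a genuine gap in the paper's argument as well. A correct $t$-free bound such as $P_\infty\le\prod_{j\ge1}(1-q^{-j})^{-1}$ (obtained by using $1-t^{-n}\le1$ in the exact series and resumming) repairs the lemma with a larger constant, which is all the subsequent rate-of-convergence estimates require.
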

\begin{proof}
  The equalities follow from the definitions of $\pi_{q,t}(\lam)$,
  $(t,q)_k$ and $(q,q)_k$. Since
  $\displaystyle\frac{1-t^{-1}q^{-k}}{1-q^{-k}}$ $>1$,
  the sequence is increasing. The bound follows from
\begin{align*}
\prod_{j=1}^\infty \frac{1-t^{-1}q^{-j}}{1-q^{-j}}
&= \exp\left(\sum_{j=1}^\infty \log(1-t^{-1}q^{-j})-\log(1-q^{-j})\right) \\
&= \exp\left(\sum_{j=1}^\infty \sum_{n=1}^\infty 
\big( \frac{q^{-jn}}{n} - \frac{t^{-n}q^{-jn}}{n}\big)\right) \\
&= \exp\left(\sum_{n=1}^\infty \sum_{j=1}^\infty 
\frac{q^{-jn}(1-t^{-n})}{n} \right) \\
&= \exp\left(\sum_{n=1}^\infty 
\frac{(1-t^{-n})}{n}\frac{q^{-n}}{1-q^{-n}} \right) \\
&= \exp\left(\sum_{n=1}^\infty 
\frac{(1-t^{-n})}{q^n-1}\frac{1}{n} \right) \\
&\leq \exp\left( \sum_{n=1}^\infty \frac{1}{2q^nn} \right) \\
&= \exp\left(-\frac12\log(1-q^{-1})\right) 
= \frac{1}{\sqrt{1-q^{-1}}}.\qedhere
\end{align*}
\end{proof}
\begin{remark}
  The function $P_\infty = \lim_{k\to \infty} P_k$ is an analytic function of $q,t$ for
  $|q|,|t|>1$, thoroughly studied in the classical theory of
  partitions \cite[Sect.~2.2]{andrews}. 
\end{remark}

For the next lemma, recall the usual dominance partial order on
$\calp_k:\lam\geq\mu$ if $\lam_1+\dots+\lam_i\geq\mu_1+\dots+\mu_i$
for all $i$ \cite[I.1]{mac}. This amounts to ``moving up boxes'' in
the diagram for $\mu$. Thus $(k)$ is largest, $(1^k)$ smallest. When
$k=6$, $(5,1)>(4,2)>(3,3)$, but (3,3) and (4,1,1) are not comparable.
The following result shows that the eigenvalues $\beta_\lam$ are
monotone in this order. A similar monotonicity holds for the random
transpositions chain \cite{pd150}, the Ewens sampling chain \cite{pd86},
and the Hecke algebra deformation chain \cite{pd45}.
\begin{lemma}
  For $q,t>1$, the eigenvalues
\begin{equation*}
  \beta_\lam=\frac{t}{q^k-1}\sum_{j=1}^{\ell\lamp}\left(q^{\lam_j}-1\right)t^{-j}
\end{equation*}
  are monotone in $\lam$.
\label{lem53}
\end{lemma}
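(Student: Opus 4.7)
The plan is to reduce to covering relations in the dominance order and then check the inequality by a direct calculation. Since the factor $t/(q^k-1)$ in the definition of $\beta_\lambda$ is positive for $q,t>1$, it suffices to show that the quantity
\begin{equation*}
S(\lambda)=\sum_{j=1}^{\ell(\lambda)}\bigl(q^{\lambda_j}-1\bigr)t^{-j}
\end{equation*}
is monotone along a single ``move-up-one-box'' step. Recall that in the dominance order, $\lambda$ covers $\mu$ exactly when $\lambda$ is obtained from $\mu$ by decreasing some $\mu_l$ by $1$ and increasing some $\mu_k$ by $1$, with $k<l$, in such a way that the result is still a partition. Any dominance comparison $\lambda\ge\mu$ is a composition of such covers, so establishing $S(\lambda)\ge S(\mu)$ for a single cover finishes the proof.

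The main step is to compute $S(\lambda)-S(\mu)$ explicitly in two subcases. If $\mu_l>1$, then $\ell(\lambda)=\ell(\mu)$, only rows $k$ and $l$ change, and
\begin{equation*}
S(\lambda)-S(\mu)=\bigl(q^{\mu_k+1}-q^{\mu_k}\bigr)t^{-k}+\bigl(q^{\mu_l-1}-q^{\mu_l}\bigr)t^{-l}
=(q-1)\bigl[q^{\mu_k}t^{-k}-q^{\mu_l-1}t^{-l}\bigr].
\end{equation*}
Since $\mu$ is a partition, $\mu_k\ge\mu_l$, and since $k<l$ and $q,t>1$,
\begin{equation*}
\frac{q^{\mu_k}t^{-k}}{q^{\mu_l-1}t^{-l}}=q^{\mu_k-\mu_l+1}t^{\,l-k}\ge q\cdot t>1,
\end{equation*}
so the difference is positive. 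If instead $\mu_l=1$ and $l=\ell(\mu)$, the last row disappears and
\begin{equation*}
S(\lambda)-S(\mu)=(q-1)q^{\mu_k}t^{-k}-(q-1)t^{-l}=(q-1)t^{-l}\bigl[q^{\mu_k}t^{\,l-k}-1\bigr]>0,
\end{equation*}
again using $\mu_k\ge1$ and $l>k$. Combining the two cases gives $S(\lambda)\ge S(\mu)$ for every cover, and hence $\beta_\lambda\ge\beta_\mu$ whenever $\lambda\ge\mu$ in dominance.

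The only subtlety is bookkeeping the row-length change in the boundary case $\mu_l=1$, $l=\ell(\mu)$; apart from that, the verification is a one-line inequality driven entirely by $q>1$, $t>1$, and the inequality $\mu_k\ge\mu_l$ built into the partition condition. No deeper structure of the Macdonald theory is needed for this lemma.
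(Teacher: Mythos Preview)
Your proof is correct and follows essentially the same route as the paper's: reduce to a single ``move one box up'' step and verify the resulting inequality directly using $q,t>1$ and $\mu_k\ge\mu_l$. One small inaccuracy: not every single-box-up move is a \emph{cover} in dominance (e.g.\ $(3,1,1,1)\to(4,1,1)$ is not a cover, since $(3,2,1)$ lies strictly between), but this is harmless---what you actually need, and what is true, is that such moves generate the dominance order, so checking the inequality on them suffices. Your separate treatment of the case $\mu_l=1$, $l=\ell(\mu)$ is more careful than the paper's version but not strictly necessary, since the vanishing term $(q^0-1)t^{-l}=0$ makes the general computation cover that case automatically.
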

\begin{proof}
  Consider first a partition $\lam,\ i<j$, with
  $a=\lam_i\geq\lam_j=b$, where moving one box from row $i$ to row $j$
  is allowed.  It must be shown that
  $q^{a+1}t^{-i}+q^{b-1}t^{-j}>q^at^{-i}+q^bt^{-j}$. Equivalently,
\begin{align*}
q^{a+1}+q^{b-1}t^{-(j-i)}&>q^a+q^bt^{-(j-i)}\\
\text{or}\qquad q^{a+1}t^{j-i}+q^{b-1}&>q^at^{j-i}+q^b\\
\text{or}\qquad q^at^{j-i}(q-1)&>q^{b-1}(q-1).
\end{align*}
Since $t^{j-i}>1$ and $q^{a-b+1}>1$,
this always holds.
\end{proof}

By elementary manipulations,
$\displaystyle\frac{q^a-1}{q^b-1}<\frac{q^a}{q^b}=\frac1{q^{b-a}}$ for
$1<a<b$, so that
\begin{equation}
\beta_{(k-r,r)}=\frac{t}{q^k-1}\left(\frac{q^{k-r}-1}{t}+\frac{q^r-1}{t^2}\right)
\leq\frac1{q^r}+\frac1{tq^{k-r}}=\frac1{q^r}\left(1+\frac1{tq^{k-2r}}\right),
\label{Arunsbeta}
\end{equation}
which establishes \eqref{56}.

\subsection{Proof of Theorem \ref{thm51}}\label{sec53}

From Theorem \ref{thm2}, the normalized eigenvectors are given by
\begin{equation}
\barf_\lam(k)^2=\frac{\left(X_{(k)}^\lam\left(q^k-1\right)\right)^2}{c_\lam c_\lam'}\cdot
\frac{(t,q)_k}{(q,q)_k},\quad\text{where}\quad
X_{(k)}^\lam=\prod_{\substack{(i,j)\in\lam\\(i,j)\neq(1,1)}}\left(t^{i-1}-q^{j-1}\right)
\label{512}
\end{equation}
and $c_\lam$ and $c_\lam'$ are given by \eqref{32}.

\begin{lemma}
  For $\lambda=(k-r,\gamma)$, with $\gamma\vdash r$ and $r\leq k/2$, 
\begin{equation*}
\barf_\lam(k)^2\leq \barf_{\gamma}(r)^2
\frac{\left(1-q^{-k}\right)^2}{\left(1-q^{-r}\right)^2} \frac{q^k}{t^k}\frac{(t,q)_k}{(q,q)_k}
\frac{t^r}{q^r}\frac{(q,q)_r}{(t,q)_r}.
\end{equation*}
\label{cor51}
\end{lemma}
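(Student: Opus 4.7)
My approach is to use the explicit formula for $\bar f_\lambda(k)^2$ in \eqref{512} together with the direct-sum structure of $\lambda=(k-r,\gamma)$. Since $r\le k/2$ forces $\gamma_1\le r\le k-r$, the partition $\lambda$ consists of a new first row of length $k-r$ sitting on top of $\gamma$. For each $\gamma$-part box $(i,j)$ of $\lambda$ (with $i\ge 2$), the arm lies in the same row and the leg extends only to rows strictly below, so both coincide with the arm and leg of the corresponding box $(i-1,j)$ in $\gamma$. This yields the factorizations
\begin{align*}
c_\lambda&=c_\gamma\cdot C_1, & C_1&:=\prod_{j=1}^{k-r}\bigl(1-q^{k-r-j}t^{\gamma'_j+1}\bigr),\\
c_\lambda'&=c_\gamma'\cdot C_2, & C_2&:=\prod_{j=1}^{k-r}\bigl(1-q^{k-r-j+1}t^{\gamma'_j}\bigr),\\
X_{(k)}^\lambda&=(q,q)_{k-r-1}\cdot\prod_{(i',j')\in\gamma}\bigl(t^{i'}-q^{j'-1}\bigr),
\end{align*}
where $\gamma'_j$ is the $j$-th column height of $\gamma$.

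Next, for each column index $j'$ the inner product $\prod_{i'}(t^{i'}-q^{j'-1})/(t^{i'-1}-q^{j'-1})$ telescopes; isolating the $(1,1)$-box of $\gamma$ gives the closed form
\[
X_{(k)}^\lambda=(q,q)_{k-r-1}\cdot X_{(r)}^\gamma\cdot T_\gamma,\qquad
T_\gamma:=(t^{\gamma'_1}-1)\prod_{j=2}^{\gamma_1}\frac{t^{\gamma'_j}-q^{j-1}}{1-q^{j-1}}.
\]
Substituting into the ratio $\bar f_\lambda(k)^2/\bar f_\gamma(r)^2$, cancelling the shared $c_\gamma c_\gamma'$ and $X_{(r)}^\gamma{}^2$ factors, and using $(q^k-1)^2/(q^r-1)^2=q^{2(k-r)}(1-q^{-k})^2/(1-q^{-r})^2$, the claim reduces to the concrete inequality
\[
(q,q)_{k-r-1}^{\,2}\,T_\gamma^{\,2}\,q^{k-r}t^{k-r}\;\le\;C_1\, C_2. \qquad(\star)
\]

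To verify $(\star)$, I split $C_1 C_2=D\cdot E$ according to whether the column index satisfies $j>\gamma_1$ (so $\gamma'_j=0$) or $j\le\gamma_1$. A direct reindexing yields the closed form $D=(t,q)_{k-r-\gamma_1}(q,q)_{k-r-\gamma_1}$, while $E=\prod_{j=1}^{\gamma_1}\bigl(1-q^{k-r-j}t^{\gamma'_j+1}\bigr)\bigl(1-q^{k-r-j+1}t^{\gamma'_j}\bigr)$. Taking absolute values (all factors have a definite sign for $q,t>1$), $(\star)$ becomes a product inequality indexed by $j=1,\dots,\gamma_1$ once one absorbs the $\gamma_1-1$ excess factors of $(q,q)_{k-r-1}^{\,2}/(q,q)_{k-r-\gamma_1}$ and distributes the free $q^{k-r}t^{k-r}$ across the columns. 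Each per-column bound is of the form $(t^{\gamma'_j}-q^{j-1})^2\cdot q^{\alpha_j}t^{\beta_j}\le|F_j|$ with $|F_j|=(q^{k-r-j}t^{\gamma'_j+1}-1)(q^{k-r-j+1}t^{\gamma'_j}-1)$, and follows from the elementary $(q^at^b-1)\ge q^at^b(1-q^{-1}t^{-1})$ for $a+b\ge 1$ together with $|t^{\gamma'_j}-q^{j-1}|\le t^{\gamma'_j}+q^{j-1}\le 2\,t^{\gamma'_j}q^{j-1}$.

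The main obstacle is this column-by-column distribution. The inequality $(\star)$ is tight to leading order in both $q$ and $t$, so no factor can be bounded too crudely; the bookkeeping must allocate the exponents $\alpha_j,\beta_j$ to match the leading terms column by column, with the residual factor $|(q,q)_{k-r-\gamma_1}|/|(t,q)_{k-r-\gamma_1}|$ absorbing whatever is left over. The sharpness of $(\star)$ is what produces the clean explicit form of the lemma, but it also forces the bookkeeping to be uniform in $\gamma$, which is the technical heart of the argument.
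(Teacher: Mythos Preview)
Your reduction to inequality $(\star)$ is correct and is precisely the reduction the paper makes: your $(q,q)_{k-r-1}^{2}T_\gamma^{2}$ equals the paper's $\bigl(X_{(k)}^\lambda/X_{(r)}^\gamma\bigr)^2$, and your $C_1C_2$ equals $c_\lambda c_\lambda'/(c_\gamma c_\gamma')$. The factorizations of $c_\lambda,c_\lambda',X_{(k)}^\lambda$ and the telescoping giving $T_\gamma$ are all fine, as is the closed form $D=(t,q)_{k-r-\gamma_1}(q,q)_{k-r-\gamma_1}$.

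The gap is the proof of $(\star)$ itself. Your final paragraph concedes that the column-by-column bookkeeping is not carried out, and the bounds you propose---$|t^{\gamma'_j}-q^{j-1}|\le 2\,t^{\gamma'_j}q^{j-1}$ and $(q^at^b-1)\ge q^at^b(1-q^{-1}t^{-1})$---are too lossy to close the argument. The first already introduces a factor $4^{\gamma_1}$ with nowhere to go, and as you yourself observe, $(\star)$ is sharp to leading order, so separate bounds on numerator and denominator cannot succeed uniformly in $\gamma$.

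The paper's device is to avoid separate bounds and instead bound each \emph{ratio}. After pulling out a global power of $q$, the left side of $(\star)$ divided by $C_1C_2$ is a product over the $k-r$ columns of the first row of factors of the form
\[
\frac{A_m-q^{\,m-1}}{c\,A_m-q^{\,m-1}},\qquad A_m=t^{\gamma'_m},\quad c\in\{tq^{k-r-1},\,q^{k-r}\}.
\]
The elementary inequality $\dfrac{A-x}{cA-x}\le \dfrac{1}{c}$, valid for $c>1$ and $0<x<cA$, gives exactly $q^{-(k-r-1)}t^{-1}$ and $q^{-(k-r)}$ per column with no extraneous constants, and the product collapses to $q^{-(k-r)}t^{-(k-r)}$. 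This pairing---each $(t^{\gamma'_m}-q^{m-1})$ with its matching factor from $C_1$ or $C_2$, bounded as a quotient---is the step your argument is missing.
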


\begin{proof}

\begin{figure}[h]
\centering
\includegraphics[scale=0.5, trim=0in 0in 0in 0in, clip=true]{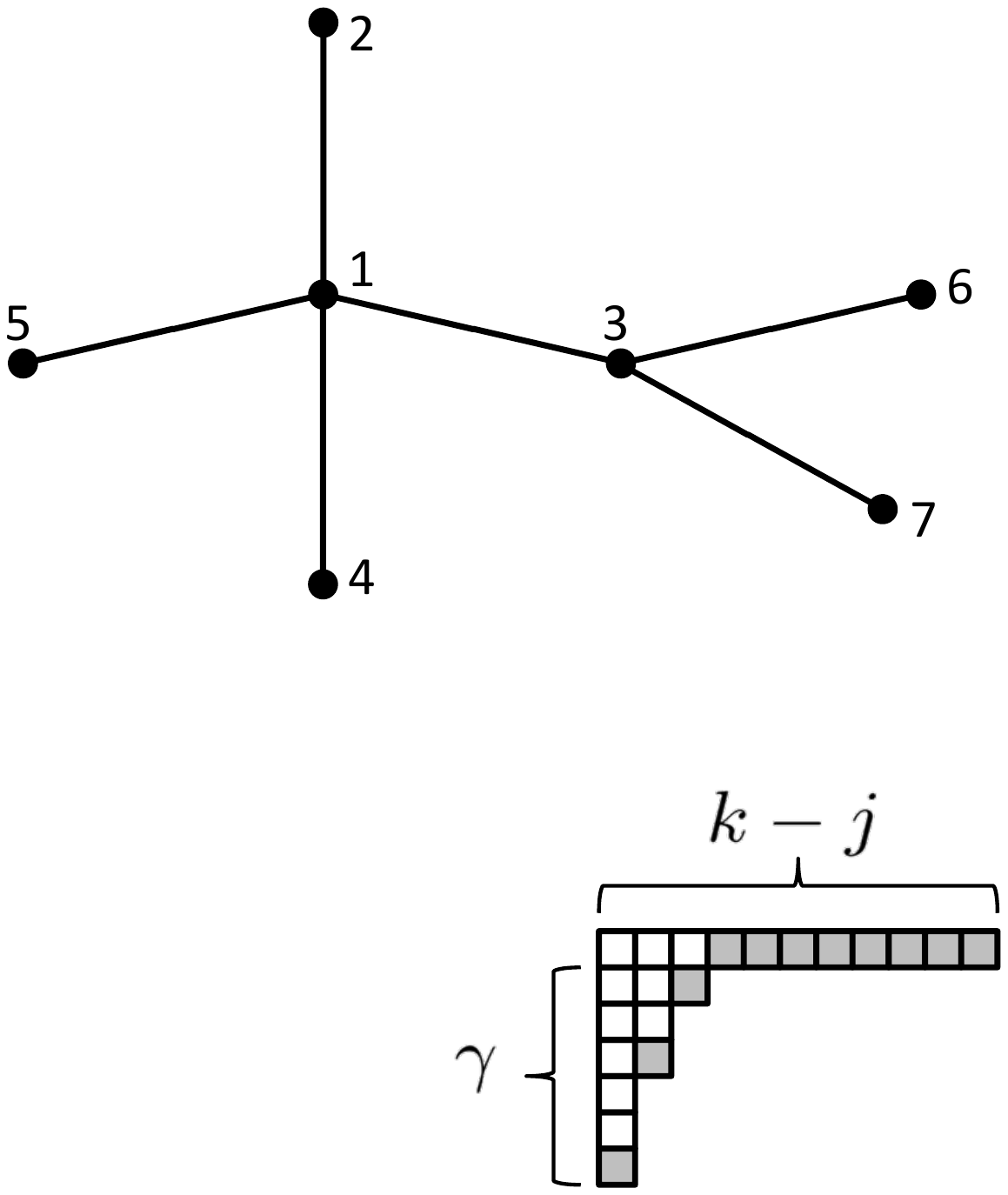}
\label{diagram}
\end{figure}

  Let $\lambda=(k-r,\gamma)$ with $\gamma\vdash r$ and $r\leq k/2$.
  Let $U$ be the boxes in the first row of $\lam$, and let $L$ be the
  shaded boxes in the figure above.

  For a box $s$ in $\lambda$, let $i(s)$ be the row number and $j(s)$
  the column number of $s$. Then
\begin{align*}
\left(\frac{X_{(k)}^\lam}{X_{(r)}^\gamma}\right)^2
&=\frac{\prod_{\substack{(i,j)\in\lam\\(i,j)\neq(1,1)}}\left(t^{i-1}-q^{j-1}\right)^2 }{\prod_{\substack{(i,j)\in\gamma\\(i,j)\neq(1,1)}}\left(t^{i-1}-q^{j-1}\right)^2}\\
&=\prod_{s\in L}\left(t^{i(s)-1}-q^{j(s)-1}\right)^2 =\prod_{s\in U}\left(t^{l(s)}-q^{j(s)-1}\right)^2\\
&=\prod_{m=1}^{\gamma_1}\left(t^{\gamma'_m}-q^{m-1}\right)^2\prod_{m=\gamma_1+1}^{k-r}\left(1-q^{m-1}\right)^2\\
\end{align*}
where $\gamma_m'$ is the length of the $m$th column of $\gamma$.
Next,
\begin{align*}
\frac{c_\lam c_\lam'}{c_\gamma c_\gamma'}&=\frac{\prod_{s\in\lam}\left(1-q^{a(s)}t^{l(s)+1}\right)\left(1-q^{a(s)+1}t^{l(s)}\right)}
                                         {\prod_{s\in\gamma}\left(1-q^{a(s)}t^{l(s)+1}\right)\left(1-q^{a(s)+1}t^{l(s)}\right)}\\ 
&=\prod_{s\in U}\left(1-q^{a(s)}t^{l(s)+1}\right)\left(1-q^{a(s)+1}t^{l(s)}\right)\\
&=\prod_{m=1}^{\gamma_1}\left(1-q^{k-r-m}t^{\gamma_m'+1}\right)\left(1-q^{k-r-m+1}t^{\gamma_m'}\right)
  \prod_{m=\gamma_1+1}^{k-r}\left(1-q^{k-r-m}t\right)\left(1-q^{k-r-m+1}\right)\\
&=q^{-2(k-r)(k-r-1)}\prod_{m=1}^{\gamma_1}\left(t^{\gamma_m'+1}q^{k-r-1}-q^{m-1}\right)\left(t^{\gamma_m'}q^{k-r}-q^{m-1}\right)\\
&\qquad\qquad\cdot\prod_{m=\gamma_1+1}^{k-r}\left(tq^{k-r-1}-q^{m-1}\right)\left(q^{k-r}-q^{m-1}\right).
\end{align*}
Thus,
\begin{align*}
\left(\frac{X_{(k)}^\lam}{X_{(r)}^\gamma}\right)^2\frac{c_\gamma c_\gamma'}{c_\lam c_\lam'}
&= q^{2(k-r)(k-r-1)}\prod_{m=1}^{\gamma_1}\frac{(t^{\gamma_m'}-q^{m-1})}{(t^{\gamma_m'+1}q^{k-r-1}-q^{m-1})}
  \frac{(t^{\gamma_m'}-q^{m-1})}{(t^{\gamma_m'}q^{k-r}-q^{m-1})}\\
&\qquad\qquad\cdot\prod_{m=\gamma_1+1}^{k-r}\frac{(1-q^{m-1})}{(tq^{k-r-1}-q^{m-1})}
 \frac{(1-q^{m-1})}{(q^{k-r}-q^{m-1})}.
\end{align*}
Since $k-r-1\geq m-1$ and $t>1$, then
$t^{\gamma_m'+1}q^{k-r-1}-q^{m-1}>0$, so that $q^{-(k-r-1)}t^{-1}<1$
implies
\begin{equation*}
\frac{(t^{\gamma_m'}-q^{m-1})}{(t^{\gamma_m'+1}q^{k-r-1}-q^{m-1})}<q^{-(k-r-1)}t^{-1}.
\end{equation*}
Similarly, since $k-r>m-1$ and $t>1$, then
$t^{\gamma_m'}q^{k-r}-q^{m-1}>0$, so that $q^{-(k-r)}< 1$ implies
\begin{equation*}
\frac{(t^{\gamma_m'}-q^{m-1})}{(t^{\gamma_m'}q^{k-r}-q^{m-1})}<q^{-(k-r)}.
\end{equation*}
Similarly, $t^{-1}q^{-(k-r-1)}$ and $q^{-(k-r)}<1$ imply
\begin{equation*}
\frac{(1-q^{m-1})}{(tq^{k-r-1}-q^{m-1})}<t^{-1}q^{-(k-r-1)}
\quad\text{and}\quad
\frac{(1-q^{m-1})}{(q^{k-r}-q^{m-1})}<q^{-(k-r)}.
\end{equation*}
So
\begin{align*}
\left(\frac{X_{(k)}^\lam}{X_{(r)}^\gamma}\right)^2\frac{c_\gamma c_\gamma'}{c_\lam c_\lam'}
&\leq q^{2(k-r)(k-r-1)}\prod_{m=1}^{\gamma_1} \left(q^{-(k-r-1)}t^{-1}\right)\left(q^{-(k-r)}\right)
 \prod_{m=\gamma_1+1}^{k-r}\left(t^{-1}q^{-(k-r-1)}\right)\left(q^{-(k-r)}\right)\\
&=q^{2(k-r)(k-r-1)}t^{-(k-r)}q^{-(k-r)^2}q^{-(k-r-1)(k-r)}= q^{-(k-r)}t^{-(k-r)}.
\end{align*}
Thus,
\begin{align*}
\frac{\barf_\lam(k)^2}{\barf_\gamma(r)^2}&=\left(\frac{X_{(k)}^\lam}{X_{(r)}^\gamma}\right)^2
\frac{c_\gamma c_\gamma'}{c_\lam c_\lam'}\frac{(q^k-1)^2}{(q^r-1)^2}\frac{(t,q)_k}{(q,q)_k}\frac{(q,q)_r}{(t,q)_r}\\
&\leq\frac1{q^{k-r}t^{k-r}}\frac{(q^k-1)^2}{(q^r-1)^2}\frac{(t,q)_k}{(q,q)_k}\frac{(q,q)_r}{(t,q)_r}. \qedhere
\end{align*}
\end{proof}

We may now bound the upper bound sum on the right hand side of
\eqref{52}.  Fix $j^*\leq k/2$. Using monotonicity (Lemma
\ref{lem53}), Lemma \ref{lem52}, Lemma \ref{lem51}, and the definition
of $\pi_{q,t}(r)$ from \eqref{12},
\begin{align*}
\sum_{\lam\ne (k)\atop \lam_1\ge k-j^*}\barf_\lam(k)^2\beta_\lam^{2\ell}&=\sum_{r=1}^{j^*} 
\sum_{\lam=(k-r,\gamma)}\beta_{(k-r,\gamma)}^{2\ell} \barf_\lam(k)^2 \leq \sum_{r=1}^{j^*} \sum_{\lam=(k-r,\gamma)} \beta_{(k-r,r)}^{2\ell} \barf_\lam(k)^2\\
&\leq\sum_{r=1}^{j^*} \beta_{(k-r,r)}^{2\ell} \sum_{\gamma \vdash r} \barf_\gamma(r)^2
\frac{(1-q^{-k})^2}{(1-q^{-r})^2} \frac{q^k}{t^k}\frac{(t,q)_k}{(q,q)_k}\frac{t^r}{q^r}\frac{(q,q)_r}{(t,q)_r} \\
&\leq\sum_{r=1}^{j^*}\beta_{(k-r,r)}^{2\ell}\frac1{\pi_{q,t}(r)}
 \frac{(1-q^{-k})^2}{(1-q^{-r})^2} \frac{q^k}{t^k}\frac{(t,q)_k}{(q,q)_k}
 \frac{t^r}{q^r}\frac{(q,q)_r}{(t,q)_r} \\
&\leq\sum_{r=1}^{j^*}\beta_{(k-r,r)}^{2\ell}r \frac{q^r}{t^r}\frac{(t,q)_r}{(q,q)_r}\frac{(1-q^{-r})}{(1-t^{-r})}
 \frac{(1-q^{-k})^2}{(1-q^{-r})^2} \frac{q^k}{t^k}\frac{(t,q)_k}{(q,q)_k}
 \frac{t^r}{q^r}\frac{(q,q)_r}{(t,q)_r} \\
&\leq\left(1-q^{-k}\right)^2 \frac{q^k}{t^k}\frac{(t,q)_k}{(q,q)_k}\sum_{r=1}^{j^*} r\beta_{(k-r,r)}^{2\ell}
 \frac1{(1-q^{-r})(1-t^{-r})} \\
&\leq\frac{(1-q^{-k})^2}{(1-q^{-1})(1-t^{-1})}\frac{q^k}{t^k}\frac{(t,q)_k}{(q,q)_k}\sum_{r=1}^{j^*} r\beta_{(k-r,r)}^{2\ell}.
\end{align*}
Using \eqref{Arunsbeta} and Lemma \ref{lem51} gives
\begin{align*}
\sum_{\lam\ne(k)\atop \lam_1\ge k-j^*} \barf_\lam(k)^2\beta_\lam^{2\ell}&\leq\frac{(1-q^{-k})}{(1-q^{-1})}
 \left(\prod_{j=1}^{k-1}\frac{1-t^{-1}q^{-j}}{1-q^{-j}}\right)
 \sum_{r=1}^{j^*}\frac{r}{q^{2r\ell}}\left(1+\frac1{tq^{k-2r}}\right)^{2\ell}\\
&\leq\frac{(1-q^{-k})}{(1-q^{-1})}\left(\prod_{j=1}^{\infty}\frac{1-t^{-1}q^{-j}}{1-q^{-j}}\right)
 \left(1+\frac1{tq^{k-2j^*}}\right)^{2\ell}\sum_{r=1}^{j^*}\frac{r}{q^{2r\ell}}\\
&\leq\frac{(1-q^{-k})}{(1-q^{-1})^{3/2}}
 \left(1+\frac1{tq^{k-2j^*}}\right)^{2\ell}\frac1{q^{2\ell}}\left(1-\frac{1}{q^{2\ell}}\right)^{-2}\\
&\leq\frac{(1-q^{-k})}{(1-q^{-1})^{3/2}}\frac1{(1-q^{-2})^2}
 \left(\frac1{q}+\frac1{tq^{k-2j^*+1}}\right)^{2\ell},
\end{align*}
by Lemma \ref{lem52}. Choose $j^*$ (of order $k/4$) so that $k-2j^*+1
= k/2$. Then
\begin{align*}
\sum_{\lam\ne(k)\atop \lam_1\ge k-j^*} \barf_\lam(k)^2\beta_\lam^{2\ell}\leq 
 \frac1{(1-q^{-1})^{3/2}(1-q^{-2})^2}\left(\frac1{q}+\frac1{tq^{k/2}}\right)^{2\ell},
\end{align*}
with $a$ as in the statement of Theorem \ref{thm51}.

Now use
\begin{align*}
\sum_{\lam\atop \lam_1< 3k/4} \barf_\lam(k)^2\beta_\lam^{2\ell}&\leq 
\sum_{\lam\atop \lam_1< 3k/4} \barf_\lam(k)^2\beta_{(\frac{3k}{4},\frac{k}{4})}^{2\ell}\\
&\leq\sum_{\lam} \barf_\lam(k)^2\beta_{(\frac{3k}{4},\frac{k}{4})}^{2\ell} 
 \leq\frac1{\pi_{q,t}(k)}\beta_{(\frac{3k}{4},\frac{k}{4})}^{2\ell} \\
&=\frac{t^k}{q^k}\frac{(q,q)_k}{(t,q)_k}\frac{(1-t^{-k})}{(1-q^{-k})}k \beta_{(\frac{3k}{4},\frac{k}{4})}^{2\ell}\\ 
&\leq k \frac{(1-t^{-k})}{(1-t^{-1})}
\left(\prod_{j=1}^{k-1} \frac{1-q^{-j}}{1-t^{-1}q^{-j}}\right)
\left(\frac{1}{q^{k/4}}\big( 1+ \frac{1}{tq^{k/2}}\big)\right)^{2\ell} 
\end{align*}
so that
\begin{equation}
\sum_{\lam\atop \lam_1< 3k/4} \barf_\lam(k)^2\beta_\lam^{2\ell}\leq 
k\frac{t}{t-1}\left(\frac2{q^{k/4}}\right)^{2\ell}.
\label{516}
\end{equation}
This completes the proof of Theorem \ref{thm51}. \qed

\subsection{Bounds starting at $(1^k)$}\label{sec54}

We have not worked as seriously at bounding the chain starting from
the partition $(1^k)$. The following results show that $\log_q(k)$
steps are required, and offer evidence for the conjecture that
$\log_q(k)+\theta$ steps suffice (where the distance to stationarity
tends to zero with $\theta$, so there is a sharp cutoff at
$\log_q(k)$).

The $L^2$ or chi-square distance on the right hand side of \eqref{52}
has first term $\beta_{k-1,1}^{2\ell}\barf_{k-1,1}^2(1^k)$.
\begin{lemma}
  For fixed $q,t>1$, as $k$ tends to infinity, 
\begin{equation*}
\barf_{(k-1,1)}(1^k)^2=\frac{\left(X_{(1^k)}^{(k-1,1)}(q-1)^k\right)^2}
{c_{(k-1,1)}c_{(k-1,1)}'(q,q)_k/(t,q)_k}\sim\left(\frac{1-q^{-1}}{1-t^{-1}}\right)k^2.
\end{equation*}
\label{lem57}
\end{lemma}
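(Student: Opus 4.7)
The plan is to compute the four factors of
\begin{equation*}
\barf_{(k-1,1)}(1^k)^2 = \frac{(X_{(1^k)}^{(k-1,1)})^2(1-q)^{2k}(t,q)_k}{c_{(k-1,1)}c'_{(k-1,1)}(q,q)_k}
\end{equation*}
separately and combine their $k\to\infty$ asymptotics. First I would apply formula \eqref{33} to write $X_{(1^k)}^{(k-1,1)}=\frac{c'_{(k-1,1)}(q,t)}{(1-t)^k}\sum_T\varphi_T(q,t)$, where the sum runs over the $k-1$ standard tableaux of shape $(k-1,1)$, indexed by the entry $j\in\{2,\ldots,k\}$ occupying the unique row-$2$ box. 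Substituting cancels one copy of $c'_{(k-1,1)}$ and reduces the problem to evaluating
\begin{equation*}
\frac{c'_{(k-1,1)}}{c_{(k-1,1)}}\cdot\left(\frac{1-q}{1-t}\right)^{2k}\cdot\frac{(t,q)_k}{(q,q)_k}\cdot\Bigl(\textstyle\sum_T\varphi_T\Bigr)^2.
\end{equation*}

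Second, the hook products $c_{(k-1,1)}$ and $c'_{(k-1,1)}$ follow immediately from \eqref{32} using the arm/leg data of the hook (arm $k-1-j$ in row $1$, leg $1$ at the corner, otherwise zero). Each coefficient $\varphi_T$, computed via the Pieri-type formula in \cite[VI]{mac} along the growth sequence $\emptyset\subset(1)\subset\cdots\subset(j-1)\subset(j-1,1)\subset(j,1)\subset\cdots\subset(k-1,1)$, reduces to a rational expression depending essentially only on $q^{j-1}$ and $t$, because the arm/leg changes relative to the all-row-$1$ growth occur only in the single column containing the row-$2$ box. The resulting sum over $j\in\{2,\ldots,k\}$ is a finite geometric-type sum that can be evaluated in closed form and, for $q,t>1$ fixed, grows linearly in $k$, contributing the $k^2$ factor once squared.

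Third, the ratio $(t,q)_k/(q,q)_k$ is handled via the infinite-product expansion of Lemma~\ref{lem51}, and the explicit hook expressions for $c$, $c'$ combine with $((1-q)/(1-t))^{2k}$ so that all exponential-in-$k$ factors cancel (they must cancel, since $\barf_\lambda^2\leq 1/\piqt(\rho)$ grows only polynomially in $k$ by Lemma~\ref{lem52}). Tracking the leading constant through these cancellations then produces the claimed $(1-q^{-1})/(1-t^{-1})$.

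The main obstacle is the evaluation of $\sum_T\varphi_T$: the Pieri coefficients behave differently for row-$1$ boxes added before vs.\ after the row-$2$ box is inserted, so the telescoping product must be split and carefully recombined to a single rational function of $q^{j-1}$ before summing. Once that closed form is in hand, the remainder is bookkeeping of $q$-Pochhammer asymptotics in the spirit of Lemma~\ref{lem51}.
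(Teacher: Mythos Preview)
Your approach is essentially the same as the paper's: use \eqref{33} to write $X_{(1^k)}^{(k-1,1)}$ as $c'_{(k-1,1)}/(1-t)^k$ times a tableau sum, cancel one $c'$, compute the hook products explicitly, and combine with $(t,q)_k/(q,q)_k$ so that all exponential factors collapse. The paper carries this out exactly, extracting a common factor $((1-t)/(1-q))^k$ from $\sum_T\varphi_T$ so that the residual sum $p$ has $k-1$ terms, the $j$th of which is the single rational expression $\dfrac{(1-q^{j-2}t^2)(1-q^{j-1})}{(1-q^{j-1}t)(1-q^{j-2}t)}$.

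One place you overshoot: you describe the main obstacle as obtaining a closed form for $\sum_T\varphi_T$ (calling it ``geometric-type''). No closed form is needed, and the sum is not geometric. Since $q,t>1$ are fixed, each term of $p$ tends to $1$ as $j\to\infty$, so $p\sim k$ immediately; that is all the paper uses. The anticipated difficulty of splitting the telescoping product before and after the row-$2$ insertion is real but mild---after factoring out the baseline $((1-t)/(1-q))^k$, only the two steps touching the second-row column contribute nontrivially, giving the single ratio above. So your plan is correct; just replace ``evaluate in closed form'' by ``observe each summand $\to 1$''.
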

\begin{proof}
  From \eqref{33} and the definition of $\varphi_T(q,t)$ from \cite[VI
  p.~341 (1)]{mac} and \cite[VI (7.11)]{mac},
\begin{align*}
X_{(1^k)}^{(k-1,1)}=\frac{c_{(k-1,1)}'(q,t)}{(1-t)^k}\sum_T\varphi_T(q,t) 
=\frac{c_{(k-1,1)}'}{(1-t)^k}\left(\frac{(1-t)^k}{(1-q)^k}p\right)
=\frac{c_{(k-1,1)}'}{(1-q)^k} p,
\end{align*}
with
\begin{equation*}
p=\left(\frac{\frac{1-t^2}{1-qt}}{\frac{1-t}{1-q}}+
\frac{\frac{1-qt^2}{1-q^2t}}{\frac{1-qt}{1-q^2}}+
\frac{\frac{1-q^2t^2}{1-q^3t}}{\frac{1-q^2t}{1-q^3}}+\dots+
\frac{\frac{1-q^{k-2}t^2}{1-q^{k-1}t}}{\frac{1-q^{k-2}t}{1-q^{k-1}}}\right).
\end{equation*}
Using the definition of $c_{(k-1,1)}$ and $c_{(k-1,1)}'$ from \eqref{32},
and the definition of $(t,q)_k$ and $(q,q)_k$ from \eqref{12},
\begin{align*}
\barf_{(k-1,1)}(1^k)^2&=\frac{\left( X_{(1^k)}^{(k-1,1)} (1-q)^k\right)^2 (t,q)_k}
{c_{(k-1,1)}c_{(k-1,1)}'(q,q)_k}=\frac{c_{(k-1,1)}'p^2}{c_{(k-1,1)}}\frac{(t,q)_k}{(q,q)_k}\\
&= \frac{(t,q)_k}{(q,q)_k}
\frac{(1-q)(1-tq^{k-1})(1-q)(1-q^2)\cdots (1-q^{k-2})}{(1-t)(1-t^2q^{k-2})(1-t)(1-tq)\cdots (1-tq^{k-3})} p^2 \\
&= \frac{ (1-tq^{k-2})(1-tq^{k-1})}{(1-q^{k-1})(1-q^k)} 
\frac{(1-q)(1-tq^{k-1})}{(1-t)(1-t^2q^{k-2})} p^2 \\
&= \frac{ (1-t^{-1}q^{-(k-2)})(1-t^{-1}q^{-(k-1)})}{(1-q^{-(k-1)})(1-q^{-k})} 
\frac{(1-q^{-1})(1-t^{-1}q^{-(k-1)})}{(1-t^{-1})(1-t^{-2}q^{-(k-2)})} p^2,
\end{align*}
and the result follows, since $p\sim k$ for $k$ large.
\end{proof}
\begin{cor}
  There is a constant $c$ such that, for all $k,\ell\geq2$,
\begin{equation*}
\chi_{(1^k)}^2(\ell)=\sum_\lam\frac{\left(M^\ell((1^k),\lam)-\piqt\lamp\right)^2}{\piqt\lamp}
\geq \left(\frac{1-q^{-1}}{1-t^{-1}}\right)\frac{k^2}{q^{2\ell}}.
\end{equation*}
\label{cor52}
\end{cor}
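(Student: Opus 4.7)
The plan is to lower-bound the chi-squared distance by retaining only a single eigenfunction term. Since every summand in the decomposition from \eqref{52},
$$\chi^2_{(1^k)}(\ell) \;=\; \sum_{\lam\neq(k)}\bar f_\lam^{\,2}(1^k)\,\beta_\lam^{2\ell},$$
is non-negative, I would drop all but the $\lam=(k-1,1)$ term to obtain
$$\chi^2_{(1^k)}(\ell) \;\geq\; \bar f_{(k-1,1)}^{\,2}(1^k)\,\beta_{(k-1,1)}^{2\ell}.$$
Everything else is simply computing the two factors on the right-hand side.

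For the eigenvector factor, I would invoke Lemma \ref{lem57} verbatim: it asserts $\bar f_{(k-1,1)}^{\,2}(1^k) \sim \bigl(\tfrac{1-q^{-1}}{1-t^{-1}}\bigr)k^2$ as $k\to\infty$. Tracing through its proof, the approximation $p\sim k$ used there is in fact a two-sided estimate $p\ge c(q,t)\,k$ for all $k\ge 2$, and every other ratio appearing is of the form $1-O(q^{-k})$; hence the asymptotic becomes a genuine lower bound of the form $c_1(q,t)\bigl(\tfrac{1-q^{-1}}{1-t^{-1}}\bigr)k^2$ uniformly in $k\ge 2$.

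For the eigenvalue factor, I read $\beta_{(k-1,1)}$ off of Theorem \ref{thm2}(2), rewriting it as
$$\beta_{(k-1,1)} \;=\; \frac{1}{q}\cdot\frac{1-q^{-(k-1)}}{1-q^{-k}} + \frac{q-1}{t(q^k-1)}.$$
Dropping the positive second term and using $1-q^{-(k-1)}\ge 1-q^{-1}$ and $1-q^{-k}\le 1$ yields $\beta_{(k-1,1)}\ge (1-q^{-1})/q$, so $\beta_{(k-1,1)}^{2\ell}\ge (1-q^{-1})^{2\ell}q^{-2\ell}$. Multiplying the two lower bounds gives the claimed inequality with an explicit constant $c=c(q,t)$.

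The one place where care is required is whether the constant absorbed on the right-hand side is truly independent of $\ell$. When $q\ge t$ an elementary manipulation shows $q\,\beta_{(k-1,1)}\ge 1$, so $\beta_{(k-1,1)}^{2\ell}\ge q^{-2\ell}$ without any $\ell$-dependent loss. In the complementary regime $q<t$, one instead has $q\beta_{(k-1,1)}\ge q(t+1)/(t(q+1))>0$ uniformly in $k\ge 2$, and the resulting factor $(q\beta_{(k-1,1)})^{2\ell}$ must be absorbed into the constant $c$ appearing in the statement. This bookkeeping step is the main (and only) subtlety; everything else is a direct substitution of Lemma \ref{lem57} and Theorem \ref{thm2}(2) into the single-term lower bound.
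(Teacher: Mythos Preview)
Your approach is exactly the paper's: keep only the $\lambda=(k-1,1)$ term in \eqref{52}, invoke Lemma~\ref{lem57} for the eigenvector factor, and bound $\beta_{(k-1,1)}$ from below via Theorem~\ref{thm2}(2). The paper's entire argument is the one-line assertion $\beta_{(k-1,1)}\ge 1/q$, followed by Lemma~\ref{lem57}.

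There is, however, a genuine gap in your handling of the case $q<t$, and you should be aware that the paper glosses over the same point. Writing
\[
\beta_{(k-1,1)}=\frac{1}{q}+\frac{q-1}{q^k-1}\Bigl(\frac{1}{t}-\frac{1}{q}\Bigr),
\]
one sees that $\beta_{(k-1,1)}\ge 1/q$ holds if and only if $q\ge t$; for $q<t$ the eigenvalue is strictly below $1/q$ for every finite $k$. Your proposed fix---absorbing the factor $(q\beta_{(k-1,1)})^{2\ell}$ into the constant $c$---does not work: this factor is strictly less than $1$ and tends to $0$ as $\ell\to\infty$ (for fixed $k$), so no $\ell$-independent constant can absorb it. Indeed, for $k=2$ and $q<t$ one has $\beta_{(1,1)}=\tfrac{t+1}{t(q+1)}<1/q$, and since $\beta_{(1,1)}$ is the second-largest eigenvalue the full chi-square satisfies $\chi^2_{(1^2)}(\ell)\le \beta_{(1,1)}^{2\ell}/\pi_{q,t}(1^2)$, whence $q^{2\ell}\chi^2_{(1^2)}(\ell)\to 0$. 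So the stated inequality with an $\ell$-independent constant actually fails in this regime; the paper's unqualified claim $\beta_{(k-1,1)}\ge 1/q$ is only valid for $q\ge t$. For the intended application (showing $\log_q k$ steps are needed), this is harmless because $\beta_{(k-1,1)}\to 1/q$ as $k\to\infty$, but as a uniform bound in $k$ and $\ell$ the case $q<t$ requires either a hypothesis or a weaker conclusion.
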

\begin{proof}
  Using only the lead term in the expression for
  $\chi_{(1^k)}^2(\ell)$ in \eqref{54} gives the lower bound
  $\beta_{(k-1,1)}^{2\ell}\barf_{(k-1,1)}^2(1^k)$. The formula for
  $\beta_{(k-1,1)}$ in Theorem \ref{thm2}(2) gives
  $\beta_{(k-1,1)}\geq\frac1{q}$, and the result then follows from
  Lemma \ref{lem57}.
\end{proof}

The corollary shows that if $\ell=\log_q(k)+\theta,\
\chi_{1^k}^2(\ell)\geq\frac{c}{q^{2\theta}}$. Thus, more than
$\log_q(k)$ steps are required to drive the chi-square distance to
zero. In many examples, the asymptotics of the lead term in the bound
\eqref{52} sharply controls the behavior of total variation and
chi-square convergence. We conjecture this is the case here, and that
there is a sharp cut-off at $\log_q(k)$.

It is easy to give a total variation lower bound:
\begin{prop}
  For the auxiliary variables chain $M(\lam,\lam')$, after $\ell$
  steps with $\ell=\log_q(k)+\theta$, for $k$ large and
  $\theta<-\frac{t-1}{q-1}$,
\begin{equation*}
\left\|M_{(1^k)}^\ell-\piqt\right\|_{\text{TV}}\geq e^{-\frac{t-1}{q-1}}-e^{-\frac1{q^\theta}}+o(1).
\end{equation*}
\label{prop51}
\end{prop}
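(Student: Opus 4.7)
I would prove the lower bound via a test set, taking
$$A = \{\lambda\in\calp_k : a_1(\lambda) = 0\}$$
(partitions with no part equal to $1$). The plan has two halves: compute the asymptotic mass of $A$ under $\piqt$, and upper-bound the mass of $A$ under $M^\ell_{(1^k)}$.

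For the stationary side, note that $\piqt$ is a multiplicative measure in the sense of \eqref{220} with $\eta_i=(t^i-1)/(q^i-1)$, and for any fixed $q,t>1$ the summability condition \eqref{221} is immediate (the summands decay geometrically). Theorem~\ref{thm1}\eqref{222} with $j=1$ then gives
$$\piqt(A) = P_{\piqt}(a_1=0) \;=\; e^{-(t-1)/(q-1)} + o(1),$$
since $\eta_1/1 = (t-1)/(q-1)$.

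For the Markov chain side, label the parts of the initial state $(1^k)$ as ``original $1$'s''. Using the coin interpretation of $w_\lam$ from \ref{sub2d3}: given the current state, flip a $1/q$ coin for each of the $k$ places, and a $1$-block survives into $\lam_{J^c}$ iff its (unique) coin shows heads. Let $S_\ell$ denote the number of original $1$'s still present in $\lam_\ell$. Any surviving original $1$ contributes to $a_1(\lam_\ell)$, so $\{a_1(\lam_\ell)=0\}\subseteq\{S_\ell=0\}$ and
$$M^\ell_{(1^k)}(A) \;\le\; P(S_\ell = 0).$$
Conditional on $J\neq\emptyset$ at each step (an event of probability $1 - q^{-k}$, since $|\lam_s|=k$ always), the survival events of distinct original $1$'s across the $\ell$ steps are independent Bernoulli$(1/q)$ trials up to a multiplicative correction of $1+O(q^{-k})$ per step. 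Hence
$$P(S_\ell = 0) = (1 - q^{-\ell})^k \bigl(1 + O(\ell q^{-k})\bigr).$$
Substituting $\ell = \log_q k + \theta$ gives $kq^{-\ell}=q^{-\theta}$, so $(1-q^{-\ell})^k \to e^{-q^{-\theta}} = e^{-1/q^\theta}$, and the error is $o(1)$ since $\ell q^{-k} = O(\log k \cdot q^{-k}) = o(1)$. Therefore $M^\ell_{(1^k)}(A) \le e^{-1/q^\theta} + o(1)$.

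Combining the two bounds,
$$\|M^\ell_{(1^k)} - \piqt\|_{\text{TV}} \;\ge\; \piqt(A) - M^\ell_{(1^k)}(A) \;\ge\; e^{-(t-1)/(q-1)} - e^{-1/q^\theta} + o(1),$$
and the hypothesis $\theta < -(t-1)/(q-1)$ guarantees the difference is positive for large $k$.

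\textbf{Main obstacle.} The only real subtlety is the correlation among survival events introduced by conditioning on $J\neq\emptyset$ at each step; I would handle this by coupling $M$ with the ``unconditioned'' process in which coins are flipped without rejection, so that survival of each original $1$ across steps becomes a genuinely independent Bernoulli$(1/q)$ sequence. The two processes agree except on the event that some step produces all-heads coins, which has probability at most $\ell q^{-k}=o(1)$. Everything else (applying Theorem~\ref{thm1} to $\piqt$, the Poisson-limit computation of $(1-q^{-\ell})^k$, and the elementary bound $\|\mu-\nu\|_{\text{TV}}\ge|\mu(A)-\nu(A)|$) is routine.
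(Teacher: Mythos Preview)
Your proof is correct and follows essentially the same route as the paper's: track the survival of the original $k$ singletons through the coin-flipping description of $w_\lambda$, use that each survives a step with probability $1/q$, compute $(1-q^{-\ell})^k\to e^{-1/q^\theta}$, and compare with the Poisson limit $\piqt(a_1=0)\to e^{-(t-1)/(q-1)}$. The only cosmetic difference is that you take the test set $A=\{a_1=0\}$ while the paper takes its complement $\{a_1>0\}$; these give the same total variation bound. You are in fact slightly more careful than the paper about the conditioning on $J\neq\emptyset$, which the paper simply ignores when asserting that the removal times are independent geometrics.
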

\begin{proof}
  Consider the Markov chain starting from $\lam=(1^k)$. At each stage,
  the algorithm chooses some parts of the current partition to
  discard, with probability given by \eqref{13}. From the detailed
  description given in \ref{sub2d3}, the chance of a specific
  singleton being eliminated is $1/q$. Of course, in the replacement
  stage \eqref{14} this (and more singletons) may reappear. Let $T$ be
  the first time that all of the original singletons have been removed
  at least once; this $T$ depends on the history of the entire Markov
  chain. Then $T$ is distributed as the maximum of $k$ independent
  geometric random variables $\{X_i\}_{i=1}^k$ with
  $P(X_i>\ell)=1/q^\ell$ (here $X_i$ is the first time that the $i$th
  singleton is removed).

  Let $A=\{\lam\in\calp_k:a_1\lamp>0\}$. From the definition
  \begin{equation*}
    \left\|M_{(1^k)}^\ell-\piqt\right\|_{\text{TV}}
    =\max_{B\subseteq\calp_k}\left|M^\ell\left((1^k),B\right)-\piqt(B)\right|
    \geq\left|M^\ell\left((1^k),A\right)-\piqt(A)\right|
  \end{equation*}
  and
  \begin{align*}
    M^\ell\left((1^k),A\right)\geq P\{T>\ell\}&=1-P\{T\leq\ell\}\\ 
    &=1-P\{\max X_i\leq\ell\}\\ 
    &=1-P(X_1\leq\ell)^k\\
    &=1-e^{k\log\left(1-P(X_1>\ell)\right)}\\
    &=1-e^{k\log(1-1/q^\ell)}\sim 1-e^{-k/q^\ell}\\ &=1-e^{-1/q^\theta}.
  \end{align*}
  From the limiting results in \ref{sub2d5}, under $\piqt,\ a_1\lamp$
  has an approximate Poisson $\left(\frac{t-1}{q-1}\right)$
  distribution. Thus, $\piqt(A)\sim1-e^{-\frac{t-1}{q-1}}$. The result
  follows.
\end{proof}

\section*{Acknowledgments}

We thank John Jiang and James Zhao for their extensive help in
understanding the measures $\piqt$. We thank Alexei Borodin for
telling us about multiplicative measures and Martha Yip for help and
company in working out exercises from \cite{mac} over the past several
years.  We thank Cindy Kirby for her expert help.

\bibliography{macdonald}

\end{document}